\documentclass[12pt,CJK]{article}
\setlength{\topmargin}{-2cm}
\setlength{\oddsidemargin}{-0.5cm}
\setlength{\textwidth}{18cm}
\setlength{\textheight}{24cm}
\usepackage{amsfonts,amsmath,amssymb,amsthm,mathrsfs}
\usepackage{latexsym}
\usepackage{graphicx}
\usepackage{indentfirst}
\usepackage{color}
\usepackage{fancyhdr}
\usepackage[colorlinks,linktocpage,linkcolor=blue]{hyperref}

\theoremstyle{plain}
\newtheorem{thm}{Theorem}[section]
\newtheorem{definition}{Definition}
\newtheorem{lemma}[thm]{Lemma}

\newtheorem{corollary}[thm]{Corollary}
\newtheorem{remark}[thm]{Remark}
\numberwithin{equation}{section}

\theoremstyle{remark}

\def\Xint#1{\mathchoice
  {\XXint\displaystyle\textstyle{#1}}%
  {\XXint\textstyle\scriptstyle{#1}}%
  {\XXint\scriptstyle\scriptscriptstyle{#1}}%
  {\XXint\scriptscriptstyle\scriptscriptstyle{#1}}%
  \!\int}
\def\XXint#1#2#3{{\setbox0=\hbox{$#1{#2#3}{\int}$}
  \vcenter{\hbox{$#2#3$}}\kern-.5\wd0}}

\def\dashint{\Xint-}

\begin{document}
\allowdisplaybreaks
\pagestyle{myheadings}\markboth{$~$ \hfill {\rm Q. Xu,} \hfill $~$} {$~$ \hfill {\rm  } \hfill$~$}

\author{
Li ~Wang~~~\\
School of Mathematics and Statistics, Lanzhou University, \\
Lanzhou, 710000, China.\vspace{0.2cm}\\
Qiang Xu
\thanks{Corresponding author.}
\thanks{Email: qiangxu@mis.mpg.de.}\\
Max Planck Institute for Mathematics in the Sciences, \\
Inselstrasse 22, 04103 Leipzig, Germany \vspace{0.2cm}\\
Peihao Zhao\\
School of Mathematics and Statistics, Lanzhou University, \\
Lanzhou, 710000, China. \vspace{0.2cm}
}

%


\title{\textbf{Convergence rates on periodic homogenization
of p-Laplace type equations}}
\maketitle
\begin{abstract}
In this paper, we find some error estimates for periodic homogenization
of p-Laplace type equations under the same structure assumption on homogenized equations.
The main idea is that by adjusting the size of the difference quotient of the correctors
to make the convergence rate visible.
In order to reach our goal, the corresponding flux corrector with some properties
are developed.
Meanwhile, the shift-arguments is in fact applied down to $\varepsilon$ scale,
which leads to a new weighted type inequality for smoothing operator
with the weight satisfying Harnack's inequality in small scales.
As a result, it is possible to develop some large-scale estimates.
We finally mention that our approach brought in a systematic error
(this phenomenon will disappear in linear and non-degenerated cases),
which was fortunately a quantity $o(\varepsilon)$ here.
\\
\textbf{Key words.} Homogenization; p-Laplace;
convergence rates; large-scale estimates
\end{abstract}

\tableofcontents

\section{Instruction and main results}
The purpose of the present paper is mainly to study the error estimates
for a class of quasilinear elliptic
equations, arising in the periodic homogenization theory.
More precisely, let $\Omega\subset\mathbb{R}^d$ be a bounded
domain, and consider the following elliptic equations in divergence form
depending on a parameter $\varepsilon > 0$,
\begin{eqnarray*}
(\text{D}_\varepsilon)\left\{\begin{aligned}
\mathcal{L}_{\varepsilon} u_\varepsilon \equiv
-\text{div}A(x/\varepsilon,\nabla u_\varepsilon) &= F &\qquad&\text{in}~~\Omega, \\
 u_\varepsilon &= g &\qquad& \text{on}~\partial\Omega.
\end{aligned}\right.
\end{eqnarray*}

Given three constants $\mu_0,\mu_1,\mu_2>0$, let us fix a function
$A:\mathbb{R}^d\times\mathbb{R}^d\to\mathbb{R}^d$ which satisfies the following conditions.
\begin{itemize}
\item For every $y, \xi\in\mathbb{R}^d$, we have
    the periodicity $A(y+z,\xi) = A(y,\xi)$ for $z\in\mathbb{Z}^d$, and $A(y,\cdot)$ is homogeneous with respect to the second variable, i.e.,
\begin{equation}\label{a:1}
  A(y,t\xi) = t^{p-1}A(y,\xi)  \quad\text{for~any}~t\geq 0.
\end{equation}
\item For any $y\in\mathbb{R}^d$, and $\xi,\xi^\prime\in\mathbb{R}^d$,
we impose the following coerciveness and growth conditions:
\begin{equation}\label{c:1}
\begin{aligned}
\mu_0 (|\xi|+|\xi^\prime|)^{p-2}|\xi-\xi^\prime|^2
\leq \big<A(y,\xi)-A(y,\xi^\prime),\xi-\xi^\prime\big>
\leq \mu_1(|\xi|+|\xi^\prime|)^{p-2}|\xi-\xi^\prime|^2.
\end{aligned}
\end{equation}
\item The smoothness condition is also imposed, and there holds
\begin{equation}\label{a:4}
|A(y,\xi)-A(y^\prime,\xi)|\leq \mu_2|y-y^\prime||\xi|^{p-1}
\end{equation}
for all $y,y^\prime\in\mathbb{R}^d$ and $\xi\in\mathbb{R}^d$.
\end{itemize}
As a class of the examples, one may consider the weighted p-Laplace equations such as
$A(y,\xi) = a(y)|\xi|^{p-2}\xi$ (see \cite[pp.225]{P})
or the model $A_i(y,\xi) = a_{ij}(y)|\xi|^{p-2}\xi_j$
appeared in \cite[pp.117]{ABJLGP}. We also mention that the Lipschitz
continuity in $\eqref{a:4}$ can be weaken into a H\"older one, while
the present condition may highlight other key factors in the theory.

In terms of qualitative homogenization, our assumptions are merely special cases
considered in \cite{CDD,MD,FM}, so the process of homogenization of
$(\text{D}_\varepsilon)$ may be understood in their way, i.e., in the sense of G-convergence.
Let $F\in W^{-1,p^\prime}(\Omega)$ with $1/p+1/p^\prime = 1$ and assume
$u_\varepsilon$ is a weak solution to problem $(\text{D}_\varepsilon)$. It is known
that $u_\varepsilon \rightharpoonup u_0$ weakly in $W^{1,p}(\Omega)$, and
$A(x/\varepsilon,\nabla u_\varepsilon)\rightharpoonup\widehat{A}(\nabla u_0)$ weakly
in $L^p(\Omega;\mathbb{R}^d)$, as $\varepsilon\to 0$,
where $u_0$ is the solution to the effective (homogenized) equation
\begin{equation}\label{pde:1.3}
(\text{D}_0)\left\{\begin{aligned}
\mathcal{L}_{0} u_0 \equiv
-\text{div}\widehat{A}(\nabla u_0) &= F &\qquad&\text{in}~~\Omega, \\
 u_0 &= g &\qquad& \text{on}~\partial\Omega.
\end{aligned}\right.
\end{equation}
The function $\widehat{A}:\mathbb{R}^d\to\mathbb{R}^d$ is defined for every $\xi\in\mathbb{R}^d$ by
\begin{equation}\label{eq:1.1}
\widehat{A}(\xi) = \int_{Y=(-\frac{1}{2},\frac{1}{2}]^d} A(y,\xi+\nabla_y N(y,\xi))dy,
\end{equation}
where $N(y,\xi)$ is the so-called corrector, satisfying the cell problem
\begin{equation}\label{pde:1.2}
\left\{\begin{aligned}
&\text{div} A(y,\xi+\nabla_y N(y,\xi)) = 0 \quad\text{in}~ Y,\\
& N(\cdot,\xi)\in W^{1,p}_{per}(Y), \quad \dashint_{Y}N(\cdot,\xi) = 0,
\end{aligned}\right.
\end{equation}
where the definition $W^{1,p}_{per}(Y)$ may be found in \cite{MD,VSO},
and ``$\dashint_{Y}$'' is an average integral, defined in Subsection $\ref{subsec:1.1}$.
The existence theory may be found in \cite[Theorem 26.A]{Z}.

Concerning the quantitative estimates for homogenization problems, we hope the effective operator
admits the same structure as the assumption $\eqref{c:1}$,
\begin{equation}\label{c:2}
\tilde{\mu}_0 |\xi-\xi^\prime|^2\big(|\xi|+|\xi^\prime|\big)^{p-2}
\leq \big<\widehat{A}(\xi)-\widehat{A}(\xi^\prime),\xi-\xi^\prime\big>
\leq \tilde{\mu_1}(|\xi|+|\xi^\prime|)^{p-2}|\xi-\xi^\prime|^2
\end{equation}
for any $\xi,\xi^\prime\in\mathbb{R}^d$,
where $\tilde{\mu}_0,\tilde{\mu}_1$ may depend on $\mu_0,\mu_1,p$ and $d$,
which means that the homogenized operator $\mathcal{L}_0$
belongs to an analogy type of the operators as $\mathcal{L}_\varepsilon$ does, and
it is clear to be true in the special case $p=2$ (see for example \cite{ASC,ABJLGP,C,WXZ1}).
Under the assumptions $\eqref{a:1}$,$\eqref{c:1}$,
one just verified the growth condition of $\eqref{c:2}$ for the case $2\leq p<\infty$,
and the coerciveness of $\eqref{c:2}$ for the case $1<p\leq 2$
(see Lemma $\ref{lemma:2.5}$).
However, there are some examples (see Section $\ref{section:5}$) to
illustrate the coerciveness of $\eqref{c:2}$ for $2<p<\infty$,
and the growth condition for the case $1<p<2$.

On the other hand, it is more or little known that the quantitative estimates
in homogenization theory relies on a higher regularity of the weak solution to the effective
equation (a similar statement may be found in \cite[pp.477]{VSO}). Therefore,
the purpose of this hypothesis is to ensure that the weak solution to
$(\text{D}_0)$ owns a reasonable regularity (see Lemma $\ref{lemma:3.3}$),
otherwise we have to directly assume the effective solution $u_0$ admits these regularities.

\subsection{Motivation and relation to previous works}
Before recounting the main results, it is better to explain our initial ideas and source of them.
We begin the introduction from the two-scale expansion argument (see \cite{ABJLGP,VSO}), which is
\begin{equation*}
 u_\varepsilon = u_0(x) + \varepsilon u_1(x,y) + \varepsilon^2 u_2(x,y) + \cdots
\qquad\quad y=x/\varepsilon.
\end{equation*}
By a formal computation, we derive that
\begin{equation*}
 \underbrace{\text{div}_y A(y,\nabla u_0 + \nabla_y u_1) = 0}_{(E_1)}
\qquad\text{and}\qquad
 \text{div}_y A(y,\nabla u_1 + \nabla_y u_2) = F - \text{div}_x A(y,\nabla u_0 + \nabla_y u_1),
\end{equation*}
while the solvability of the second equation implies
\begin{equation*}
\underbrace{\text{div}_x \dashint_{Y} A(y,\nabla u_0 + \nabla_y u_1) dy = F}_{(E_2)}.
\end{equation*}
Obviously, the equations $(E_1)$ and $(E_2)$ inspired people to build the equation $\eqref{pde:1.2}$ and
the formula $\eqref{eq:1.1}$, respectively.
Thus, the image of $\nabla u_0$ determines $u_1$ and the cardinal number
of the set $\{\nabla u_0(x):x\in\Omega\}\subset\mathbb{R}^d$ reveals how many
equations like $\eqref{pde:1.2}$ need to be solved, if we want the formula
$\eqref{eq:1.1}$ to be fully understood.
In this sense, one may quickly realize the benefit of the linear equation,
which tells us $u_1$ is linearly dependent of $\nabla u_0$, and therefore we just solve
a finite number of equations that the correctors satisfy.

Hence,
from the view of reducing the number of equations like $\eqref{pde:1.2}$, we find
that the condition $\eqref{a:6}$ plays a similar role. And more importantly,
it also works for nonlinear cases,
which eventually help us to construct some examples to verify
the assumption $\eqref{c:2}$ (see Section $\ref{section:5}$).
Also, this view might be used to explain the motivation of introducing numerical correctors.
We refer the reader to \cite{EMZ,G,MD} and their references therein for this direction.

On the other hand, the non-degenerated case ($p=2$)
is very special. Although it does little to decrease
the number of the equations that the corrector satisfies,
it has been proved that $u_1$ is in fact Lipschitz continuous with respect to $\nabla u_0$
(see for example \cite[Lemma 2.1]{WXZ1}). Thus, we may consider the quantities
$w_\varepsilon = u_\varepsilon - v$ with $v = u_0 + \varepsilon N(y,\varphi)$, and
\begin{equation*}
\nabla w_\varepsilon = \nabla u_\varepsilon - \nabla v,
\end{equation*}
where $\varphi\in H_0^1(\Omega;\mathbb{R}^d)$ may be fixed later.
As in the linear case, one may derive a convergence rate from estimating
the quantity $\|\nabla w_\varepsilon\|_{L^2(\Omega)}$ by energy methods (see \cite[Theorem 1.1]{WXZ1}),
in which we believe that
the formula below reveals
an important information of convergence rates for elliptic equations with the divergence structure,
\begin{equation}\label{pri:1.1}
\begin{aligned}
\|\nabla w_\varepsilon\|_{L^2(\Omega)}^2
\leq  \bigg|\int_\Omega
&\Big[\underbrace{\widehat{A}(\nabla u_0)
- \widehat{A}(\varphi)}_{\text{T}_1:~\text{involving~the~shift~argument}} \\
& +\underbrace{\widehat{A}(\varphi) - A(x/\varepsilon,\varphi
+\nabla_yN(x/\varepsilon,\varphi))}_{\text{T}_2:~\text{giving~the~flux~tensor}}\\
& +\underbrace{A(x/\varepsilon,\varphi+\nabla_yN(x/\varepsilon,\varphi))
- A(x/\varepsilon,\nabla v)}_{\text{T}_3:~\text{producing~the remainder~terms~of~$\varepsilon$~order}}\Big] \cdot \nabla w_\varepsilon dx\bigg|
\end{aligned}
\end{equation}
(where $y = x/\varepsilon$, and see \cite[Lemma 3.1]{WXZ1}).
So, in order to better describe our ideas,
we would like to introduce the relevant calculations on the right-hand side of $\eqref{pri:1.1}$.
In general,
the computations of $T_3$ is not complicated as it appears,
which can be reduced to those of the other two terms.
As we have claimed above, the term $T_1$ is in fact related to the following estimate
\begin{equation}\label{pri:1.2}
  \|S_\varepsilon(\varphi) - \varphi\|_{L^2(\Sigma_{2\varepsilon})}
\leq  C\varepsilon \|\nabla\varphi\|_{L^2(\Sigma_{\varepsilon})}
\end{equation}
(a key inequality in the shift argument),
where $C$ depends only on $d$ (see Lemma $\ref{lemma:2.7}$), and
the set $\Sigma_{r}$ is referred to as the
``co-layer'' part of $\Omega$ (see Subsection $\ref{subsec:1.1}$).
If the smoothing operator $S_\varepsilon$ (see Definition $\ref{def:1}$) is
replaced by Steklov averaging operator, the reader may clearly find the relationship with
a shift argument developed by V. Zhikov and S. Pastukhova (see \cite{ZVVPSE,ZVVPSE1}), while
applying smoothing operator $S_\varepsilon$ to error estimates was
first suggested by Z. Shen in $\cite{S5}$, and this improvement
proved to be more flexibility in analysis.
Moreover, let $\varphi = \psi_{r}\nabla u_0$ in $\eqref{pri:1.2}$,
where $\psi_r\in C_0^1(\Omega)$ is a cut-off function defined in $\eqref{notation:2}$.
Then, the computations on $T_1$ is consequently translated into the layer type and co-layer type
estimates, which means we need to estimate
\begin{equation*}
  \|\nabla u_0\|_{L^2(\Omega\setminus\Sigma_{2r})}
\qquad\text{and}\qquad
  \varepsilon\|\nabla^2 u_0\|_{L^2(\Sigma_{r})}
\end{equation*}
for the homogenized solution to $(\text{D}_0)$, respectively.
If only a small rate of convergence is observed,
then Meyer's estimate coupled with some
interior estimates for higher derivatives is sufficient to serve this purpose, which
in fact will benefit a large scale estimate later on.
These results have been systematically stated in \cite{S4,S5} for linear systems,
as well as in \cite{WXZ1} for a non-degenerated equation ($p=2$).
Then we proceed to show two points
in the calculations of $T_2$ in $\eqref{pri:1.1}$,
which are all based upon the cell problem $\eqref{pde:1.2}$.
The first one derived from $\eqref{eq:1.1}$ is
$\int_Y T_2 dy = 0$, and the second one is $\text{div}_y(T_2) = 0$.
Using both of them, it is not hard to construct the so-called flux corrector, and
a later calculation will benefit from its antisymmetric property. Interestingly,
this construction does not depend on whether the original model we studied is linear or not.

So far, we have mainly introduced the methods used in non-degenerated
equations ($p=2$), as well as, our views on correctors and error estimates.
Thus, in terms of the nonlinear problem $(\text{D}_\varepsilon)$,
a natural question is how to extend the above theory established
for the special case $p=2$ to more general cases $1<p<\infty$.

The first challenge we confront with is that
the first order corrector $u_1$ is only H\"older continuous with respect to $\nabla u_0$
(i.e., the corrector $N(y,\cdot)$ is uniformly
H\"older continuous with respect to the second variable for a.e.
$y\in\mathbb{R}^d$, and see Lemma $\ref{lemma:2.1}$).
The idea is that we use the difference quotient as the substitute
for the derivatives of the corrector, and the first-order approximating corrector
is constructed in the form of
\begin{equation}\label{eq:1.2}
 V(x) = \nabla u_0(x)  + D^{h}\big(\varepsilon N(\cdot/\varepsilon,\varphi)\big)(x),
\end{equation}
where $D^{h} = (D_1^h,\cdots,D_d^h)$ is a difference quotient of size $h$
(see Subsection $\ref{subsec:1.1}$).  Then we consider the quantity
\begin{equation*}
  \nabla u_\varepsilon - V,
\end{equation*}
which will play a similar role as the quantity $\nabla w_\varepsilon$ did in $\eqref{pri:1.1}$.
Apparently, the main differences are caused
by the replacement of the derivative by the difference quotient.
We are also aware of the following fact: for any $x\in\Sigma_\varepsilon$,
there holds
\begin{equation}\label{f:1.1}
  D^h_i (f)(x) = \nabla_i f(x) + o(1),
\end{equation}
as $h$ goes to zero, provided $f\in C^1(\Omega)$. As usual,
the calculations like ``$o(1)\times \varepsilon = o(\varepsilon)$'' and
``$o(1)\times C = o(1)$ for a constant $C$'' are
agreed to be true,
and $o(\varepsilon)$ is referred as to the higher order infinitesimal of $\varepsilon$,
as $\varepsilon$ goes to zero. Therefore, our main results may be shown by ignoring
the quantity $o(\varepsilon)$, which may be regarded as a kind of systematic error.
Thus, the main problem is to find the accuracy of $h$ in the formula $\eqref{eq:1.2}$.


\subsection{Main results and some comments}
For the ease of the statement, we impose the following index throughout the paper,
\begin{equation}\label{notation:1}
 \alpha = \left\{
            \begin{array}{ll}
              1/(3-p), & \hbox{$1<p\leq 2$;} \\
              2/p, & \hbox{$2<p<\infty$,}
            \end{array}
          \right.
\quad \beta = 1/p - 1/q,
\quad  \gamma = \left\{
            \begin{array}{ll}
              (p-1)/(3-p), & \hbox{$1<p\leq 2$;} \\
              ~2/p, & \hbox{$2\leq p<\infty$,}
            \end{array}
          \right.
\end{equation}
where $p<q\leq p(1+\delta)$ and $\delta\in(0,1)$
is usually very small, which is actually determined by Meyer's estimates (see Remark $\ref{remark:4.1}$).
Here $\beta\in(0,\frac{\delta}{1+\delta}]$, and $\alpha,\gamma\in(0,1]$ are used to describe the H\"older's continuity of $N(y,\cdot)$
and the corresponding flux corrector $E(y,\cdot)$ with respect to the second variable, respectively.
We now state the main results of the paper.

\begin{thm}[convergence rates]\label{thm:1.1}
Let $\Omega\subset\mathbb{R}^d$ be a bounded Lipschitz domain
with $r_0 = \emph{diam}(\Omega)$, and $0<\varepsilon\ll r_0$. Assume that
$A$ satisfies the conditions $\eqref{a:1}, \eqref{c:1}, \eqref{a:4}$.
Then there exist $\theta\in(0,1)$ and a constant $C$,
depending on $\mu_1,\mu_2,\mu_3,p$ and $d$, such that
\begin{equation}\label{pri:1.3}
\|\nabla N(\cdot,\xi)\|_{L^\infty(Y)}
+[\nabla N(\cdot,\xi)]_{C^{0,\theta}(Y)}
\leq C\Big(\dashint_{Y}|N(\cdot,\xi)|^p\Big)^{1/p}
\leq C |\xi|
\end{equation}
holds for any $\xi\in\mathbb{R}^d$. Moreover,
suppose that $\widehat{A}$ satisfies the condition $\eqref{c:2}$,
and $u_\varepsilon, u_0\in W^{1,p}(\Omega)$
satisfy $\mathcal{L}_\varepsilon u_\varepsilon = \mathcal{L}_0 u_0 =0$ in
$\Omega$ and $u_\varepsilon = u_0$ on $\partial\Omega$, with assumption that
\begin{equation}\label{a:5}
M := \|\nabla u_0\|_{C^{0,\vartheta}(\Omega)}
\quad \text{with~some~}\vartheta\in(0,1),
\quad\text{and}\quad
\Omega_{M,\varepsilon} = \{x\in\Omega:
|\nabla u_0|\leq M\varepsilon^{\vartheta}\}.
\end{equation}
Let $\varphi_\varepsilon = S_{\varepsilon}(\psi_{4\varepsilon}\nabla u_0)$, and
the first-order approximating corrector is in the form of $\eqref{eq:1.2}$, i.e.,
\begin{equation*}
 V(x) = \nabla u_0(x)  + D^{h}\big(\varepsilon N(\cdot/\varepsilon,\varphi_\varepsilon)\big)(x),
\end{equation*}
where we prefer
$h=\varepsilon^\tau$ and $1<\tau<1/(1-\alpha)$ in advance,
and $\psi_{4\varepsilon}\in C_0^1(\Omega)$ is cut-off function satisfying $\eqref{notation:2}$. Then,
by ignoring a systematic error $o(\varepsilon)$, we have the following conclusions.
\begin{itemize}
\item For the case $1<p<2$, one may derive that
\begin{equation}\label{pri:1.5}
\|\nabla u_\varepsilon - V\|_{L^p(\Omega)}
\lesssim |\Omega_{M,\varepsilon}|^{\beta}\|\nabla u_0\|_{L^q(\Omega_{M,\varepsilon})}
+ \varepsilon^{\theta(\tau-1)}\|\nabla u_0\|_{L^q(\Omega)},
\end{equation}
where the range of $\tau$ is determined by
\begin{equation}\label{range:1}
1 <\tau <\frac{1-\gamma+\theta(p-1)+\gamma\beta}{1-\gamma+\theta(p-1)}.
\end{equation}
If we further consider the connection to the non-degenerated case $(p=2)$, there holds
\begin{equation}\label{pri:1.4}
\|\nabla u_\varepsilon - V\|_{L^p(\Omega)}
\lesssim  (1-I_{\{p=2\}})|\Omega_{M,\varepsilon}|^{\beta}\|\nabla u_0\|_{L^q(\Omega_{M,\varepsilon})}
+ \varepsilon^{\alpha\beta+\frac{(1-\gamma)(1-\tau)}{p-1}}\|\nabla u_0\|_{L^q(\Omega)}
\end{equation}
for any $1<p\leq 2$, where $I_{\{p=2\}}$ denotes the character function of the set $\{p=2\}$, and
it requires the range of $\tau$ to be
\begin{equation}\label{range:2}
\begin{aligned}
\frac{1-\gamma+\theta(p-1)+\gamma\beta}{1-\gamma+\theta(p-1)} < \tau
< \frac{1-\gamma+\gamma\beta}{1-\gamma}.
\end{aligned}
\end{equation}
  \item If $2<p<\infty$, then we have
\begin{equation}\label{pri:1.6}
\begin{aligned}
&\|\nabla u_\varepsilon - V\|_{L^p(\Omega)}\\
&\lesssim |\Omega_{M,\varepsilon}|^{\beta}\|\nabla u_0\|_{L^q(\Omega_{M,\varepsilon})}
+ \left\{\begin{aligned}
&\varepsilon^{(\frac{p-2}{p-1})[\beta+(1-\alpha)(1-\tau)]}\|\nabla u_0\|_{L^q(\Omega)}
&\text{~if~}& 2<p<3;\\
&\varepsilon^{(\frac{1}{p-1})[\beta+(1-\alpha)(1-\tau)]}\|\nabla u_0\|_{L^q(\Omega)}
&\text{~if~}& 3\leq p<\infty,
\end{aligned}\right.
\end{aligned}
\end{equation}
when $\tau$ is chosen such that
\begin{equation}\label{range:3}
\frac{1-\alpha+\theta+\beta}{1-\alpha+\theta}<\tau
<\frac{1-\alpha+\beta}{1-\alpha}.
\end{equation}
\end{itemize}
Here $\lesssim$ means $\leq$ up to a multiplicative constant which depends on
$\mu_0,\mu_1,\mu_2,\tilde{\mu}_0,\tilde{\mu}_1, p,q,d$ and $r_0$.
\end{thm}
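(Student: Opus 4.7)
The proof has two independent parts. For the corrector estimates $\eqref{pri:1.3}$, I would treat the cell problem $\eqref{pde:1.2}$ as a quasilinear elliptic equation on the torus $Y$ satisfying the structure $\eqref{c:1}$--$\eqref{a:4}$ with frozen parameter~$\xi$. Testing $\eqref{pde:1.2}$ against $N(\cdot,\xi)$ and using the $p$-homogeneity $\eqref{a:1}$ yields $\dashint_Y|N(\cdot,\xi)|^p \lesssim |\xi|^p$. Lieberman's interior regularity for $p$-Laplace type operators, combined with standard Caccioppoli and Moser iteration on the periodic cell, then upgrades this to the $L^\infty$ and $C^{0,\theta}$ bounds on $\nabla N(\cdot,\xi)$.

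For the convergence rate I would proceed through a nonlinear analogue of the identity $\eqref{pri:1.1}$. The coerciveness on the side of $\widehat A$ in $\eqref{c:2}$ and on the side of $A(y,\cdot)$ in $\eqref{c:1}$, combined with the weak formulations $\mathcal L_\varepsilon u_\varepsilon=0=\mathcal L_0 u_0$ and the choice of a scalar test function whose gradient best approximates $V$, decompose the resulting integrand into $T_1+T_2+T_3$ with
\[
T_1 = \widehat A(\nabla u_0)-\widehat A(\varphi_\varepsilon),\qquad T_2 = \widehat A(\varphi_\varepsilon)-A(y,\varphi_\varepsilon+\nabla_y N(y,\varphi_\varepsilon)),
\]
\[
T_3 = A(y,\varphi_\varepsilon+\nabla_y N(y,\varphi_\varepsilon))-A(y,V),\qquad y=x/\varepsilon.
\]
The gap between this scalar test function and the ``true'' antiderivative of $V$ contributes the systematic error $o(\varepsilon)$, absorbed through $\eqref{f:1.1}$.

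Each block is estimated by a dedicated tool. For $T_1$, the growth in $\eqref{c:2}$ together with the smoothing-shift inequality $\eqref{pri:1.2}$ (applied with $\varphi=\psi_{4\varepsilon}\nabla u_0$) reduces the contribution to a co-layer piece of order $\varepsilon\|\nabla^2 u_0\|$ and a degeneracy piece $|\Omega_{M,\varepsilon}|^\beta\|\nabla u_0\|_{L^q(\Omega_{M,\varepsilon})}$, the latter obtained via H\"older on the set where $|\nabla u_0|$ is small, using $\eqref{a:5}$. For $T_2$, I would invoke the flux corrector $E(y,\xi)$ constructed earlier in the paper: the orthogonality $\int_Y T_2\,dy=0$ and $\mathrm{div}_y T_2=0$ produce an antisymmetric potential with $T_2=\mathrm{div}_y E(y,\varphi_\varepsilon)$, whose $C^{0,\gamma}$-continuity in $\xi$ is crucial; integration by parts then produces the decisive factor of~$\varepsilon$. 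For $T_3$, the H\"older continuity of $N(y,\cdot)$ in its second argument (with exponent $\alpha$) yields a pointwise bound of order $h^\alpha|\varphi_\varepsilon|^{p-1}=\varepsilon^{\tau\alpha}|\varphi_\varepsilon|^{p-1}$, again modulo $o(\varepsilon)$.

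The hard part---and the reason for the three separate regimes---is passing from the weighted left-hand side $\int(|\nabla u_\varepsilon|+|V|)^{p-2}|\nabla u_\varepsilon-V|^2$ back to the unweighted $L^p$-norm. For $1<p<2$ one uses the pointwise identity $|\xi-\xi'|^p \le (|\xi|+|\xi'|)^{p(2-p)/2}\bigl[(|\xi|+|\xi'|)^{p-2}|\xi-\xi'|^2\bigr]^{p/2}$ and then H\"older, while for $p>2$ one goes in the opposite direction; the resulting $\varepsilon$-exponents match those in $\eqref{pri:1.5}$, $\eqref{pri:1.4}$ and $\eqref{pri:1.6}$, with the split at $p=3$ in $\eqref{pri:1.6}$ reflecting whether $(|\xi|+|\xi'|)^{p-2}$ can be bounded uniformly on the relevant range. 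The admissible intervals for $\tau$ in $\eqref{range:1}$, $\eqref{range:2}$ and $\eqref{range:3}$ are exactly those on which the $\varepsilon$-gain from $T_2$ dominates the $\varepsilon^{\tau\alpha}$-cost from $T_3$ after this weight-swap, while still keeping the total error below~$\varepsilon$.
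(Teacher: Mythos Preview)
Your overall architecture (the decomposition into $T_1,T_2,T_3$ and the tools assigned to each block) matches the paper, but the engine that actually produces the exponents in $\eqref{pri:1.5}$--$\eqref{pri:1.6}$ and the $\tau$-ranges $\eqref{range:1}$--$\eqref{range:3}$ is misidentified.

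First, the left-hand side is not a weighted quantity that must be ``swapped'' back to $L^p$. The coerciveness in $\eqref{c:1}$ already gives $\langle A(y,\xi)-A(y,\xi'),\xi-\xi'\rangle\ge\mu_0|\xi-\xi'|^p$ directly (for all $1<p<\infty$, since $(|\xi|+|\xi'|)^{p-2}|\xi-\xi'|^2\ge|\xi-\xi'|^p$), so Lemma~$\ref{lemma:3.4}$ outputs $\|K\|_{L^p}^p$ with no weight. The split at $p=3$ in $\eqref{pri:1.6}$ does not come from a weight-swap; it comes from which of the three powers $1-\tfrac2p,\ \tfrac1p,\ 1-\tfrac1p$ of the quantity $\|D^h(\varepsilon N)-\nabla_yN\|_{L^p}$ in Lemma~$\ref{lemma:3.2}$ is smallest.

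Second, your bound for $T_3$ is the genuine gap. The quantity $P(y,\varphi_\varepsilon)-V$ contains the term $\nabla_yN(y,\varphi_\varepsilon)-D^h(\varepsilon N(y,\varphi_\varepsilon))$, and this is \emph{not} controlled by the $\alpha$-H\"older continuity of $N(y,\cdot)$ alone. The paper splits the difference quotient via $\eqref{eq:2.3}$,
\[
D_i^h(\varepsilon N(y,\varphi))=\int_0^1\partial_iN\bigl(y+t(h/\varepsilon)e_i,\bar\varphi\bigr)\,dt
+\varepsilon h^{\alpha-1}\Phi(y,\varphi)|D_i^h\varphi|^\alpha,
\]
so the discrepancy from $\partial_iN(y,\varphi)$ has two pieces: one of order $(h/\varepsilon)^\theta=\varepsilon^{\theta(\tau-1)}$ coming from the $C^{0,\theta}$ regularity of $\nabla_yN$ in the \emph{first} variable (this is precisely where assumption $\eqref{a:4}$ and the estimate $\eqref{pri:1.3}$ are used), and one of order $\varepsilon h^{\alpha-1}=\varepsilon^{1-(1-\alpha)\tau}$ from the $\alpha$-H\"older continuity in the second variable. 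Your exponent $\varepsilon^{\tau\alpha}$ corresponds to neither. The same mechanism applies to the flux corrector in $T_2$ (Lemma~$\ref{lemma:2.8}$), producing competing powers $\varepsilon^{\rho(\tau-1)}$ and $\varepsilon^{1-(1-\gamma)\tau}$.

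Consequently, the $\tau$-ranges are not ``$T_2$ dominates $T_3$'': they are exactly the intervals on which $\theta(\tau-1)$ (respectively $\rho(\tau-1)$) and $(1-\alpha)(1-\tau)+\beta$ (respectively $(1-\gamma)(1-\tau)+\gamma\beta$) are ordered as needed; see $\eqref{f:3.32}$ and cases (a)--(c) in the proof. Finally, for $T_1$ on the non-degenerate set $\Omega_{M,\varepsilon}^c$ you need more than $\eqref{pri:1.2}$: the paper uses the small-scale Harnack inequality $\eqref{f:3.7}$ for $|\nabla u_0|$ to obtain the weighted shift estimate $\eqref{pri:1.9}$, which is what allows the co-layer term to be controlled by $\varepsilon\bigl(\int|\nabla^2u_0|^2|\nabla u_0|^{p-2}\bigr)^{1/2}$ rather than a plain $\varepsilon\|\nabla^2u_0\|$.
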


We really hope that the theorem can be written succinctly,
and as a compensation,
we make an effort to explain as much as possible, and some comments are as follows.

\vspace{0.1cm}
\noindent\emph{Comment 1}.
Let us begin from the explanation of the estimate $\eqref{pri:1.4}$.
For the special case $p=2$, it will become
\begin{equation*}
\|\nabla u_\varepsilon - V\|_{L^2(\Omega)}
\lesssim \varepsilon^{\beta}\|\nabla u_0\|_{L^{q}(\Omega)}
\end{equation*}
with any $\tau>(1+\beta/\theta)$, where $q=2(1+\delta)$. As mentioned before
the quantity $\nabla_x(\varepsilon N(x/\varepsilon,\varphi))$ exists in such the case, and so
the above estimate implies
\begin{equation*}
\|\nabla w_\varepsilon\|_{L^2(\Omega)}
\lesssim \varepsilon^{\beta}\|\nabla u_0\|_{L^2(\Omega)},
\end{equation*}
which is exactly what we have proved in \cite[Theorem 1.1]{WXZ1}.
So, this theorem may be roughly regarded as an extension of the previous one, and
our purpose is also to study
a uniform estimate for the solution of $(\text{D}_\varepsilon)$ through
the convergence rates (see Theorem $\ref{thm:1.2}$). For this consideration
the presence of the q-energy norm (i.e., $\|\nabla u_0\|_{L^q(\Omega)}$ which is a little stronger
than p-energy norm) in the right-hand side
of error estimates is quite convenient to us even when a higher rate of convergence could be derived by
a stronger norms.

\vspace{0.1cm}
\noindent\emph{Comment 2}.
The first term in the right-hand side of
the estimates $\eqref{pri:1.4}$, $\eqref{pri:1.5}$ and $\eqref{pri:1.6}$
has two possible directions:
\begin{equation}\label{pri:1.8}
|\Omega_{M,\varepsilon}|^{\beta}\|\nabla u_0\|_{L^p(\Omega_{M,\varepsilon})}
\leq \left\{
\begin{aligned}
 &|\Omega_{M,\varepsilon}|^{\beta}\|\nabla u_0\|_{L^p(\Omega)};\\
 &|\Omega_{M,\varepsilon}|^{\beta+\frac{1}{p}} M\varepsilon^{\vartheta}.
\end{aligned}\right.
\end{equation}
Although the first line of $\eqref{pri:1.8}$
does not clearly show any convergence rate, it is proved to be
useful and enlightening for investigating  a large scale estimate
(see Lemma $\ref{lemma:4.1}$ and Theorem $\ref{thm:1.2}$).
On the contrary, the second line of $\eqref{pri:1.8}$ gives the rate of convergence on
account of the assumption $\eqref{a:5}$.
However, it actually indicates that the process of
homogenization are not truely observable when fluctuations are limited in a small scale.

\vspace{0.1cm}
\noindent\emph{Comment 3}. Two reasons leads to the assumption $\eqref{a:5}$.
\emph{On the one hand}, the $C^{1,\vartheta}$ smoothness imposed to the effective solution
is reasonable under the precondition $\eqref{c:2}$
(although it usually admits such the regularity in a local sense), which ensures that
the calculation like $\eqref{f:1.1}$ conforms to the rule and ultimately
produces a quantity $o(\varepsilon)$. \emph{On the other hand}, the set
$\Omega_{M,\varepsilon}^c = \{x\in\Omega:|\nabla u_0(x)|>M\varepsilon^\vartheta\}$ is
in fact more important to us, since one may acquire a Harnack's inequality
\begin{equation*}
  \sup_{Y_{\varepsilon}^k} |\nabla u_0|
  \leq 2\inf_{Y_{\varepsilon}^k}|\nabla u_0|
\end{equation*}
for any $Y_{\varepsilon}^k\subset\Omega_{M,\varepsilon}^c$ with
$|Y_{\varepsilon}^k| = \varepsilon^d$ (where $\Omega_{M,\varepsilon}^c$ is an open set, and see
Subsection $\ref{subsec:1.1}$ for $Y_\varepsilon^k$),
and this together with the estimate $\eqref{pri:1.2}$ implies the following weighted inequality
\begin{equation}\label{pri:1.9}
  \Big(\int_{\Omega_{M,\varepsilon}^c} |\varphi-S_\varepsilon(\varphi)|^p |\nabla u_0|^r dx
\Big)^{1/p}
\lesssim \varepsilon\Big(\int_{\Omega}|\nabla\varphi|^p|\nabla u_0|^r dx\Big)^{1/p}
\end{equation}
for any $1<p,r<\infty$, where $|\nabla u_0|$ is referred to as a weight (see Remark $\ref{remark:2.3}$).
If familiar with the regularity theory of p-Laplace type equations, one may immediately realize
that the estimate $\eqref{pri:1.9}$ is in help of giving the quantity of the left-hand side below,
\begin{equation*}
\begin{aligned}
&\int_{\Sigma_{2r}} |\nabla u_0|^{p-2}|\nabla^2 u_0|^2 dx
\lesssim \frac{1}{r^2}\int_{\Sigma_{r}} |\nabla u_0|^{p} dx \quad\text{for~}p>2, \\
& \int_{\Sigma_{2r}}|\nabla^2 u_0|^p dx
\lesssim \frac{1}{r^p}\int_{\Sigma_{r}} |\nabla u_0|^{p} dx \quad\text{for~}1<p\leq 2,
\end{aligned}
\end{equation*}
(see Lemma $\ref{lemma:3.3}$). Thanks to them, the stated error estimates
$\eqref{pri:1.4}$, $\eqref{pri:1.5}$ and $\eqref{pri:1.6}$
do not involve the higher derivatives any more.
The above computations serve as a counterpart of those for $T_1$ in $\eqref{pri:1.1}$,
which will be addressed in Lemmas $\ref{lemma:3.6}, \ref{lemma:3.7}$.
Besides,
the set $\Omega_{M,\varepsilon}$ includes the degenerate information
of the homogenized equation $(\text{D}_0)$, and it seems that
this kind of information can not be known
in advance through the homogenization of the equation
$(\text{D}_\varepsilon)$, which eventually leads to
the separated two parts of the right-hand side in these error estimates.


\vspace{0.1cm}
\noindent\emph{Comment 4}. In both periodic and aperiodic settings,
error estimates have always been a hot topic in homogenization theory
(see for example \cite{ASC,AS,EMZ,G,GNF,GO,KLS1,P,S4,TS1,X3,WXZ1,ZVVPSE,ZVVPSE1} and
their references therein for more details), while there is few contributions in this field
concerning p-Laplace type equations, to the authors' best acknowledge. In fact,
there seems to be an interesting problem: except of the special cases in Section $\ref{section:5}$, what kind of
conditions can guarantee the homogenized operator $\mathcal{L}_0$ admits the structure
$\eqref{c:2}$.

\vspace{0.2cm}
As an application of Theorem $\ref{thm:1.1}$, we derive the following result.

\begin{thm}[large-scale H\"older estimates]\label{thm:1.2}
Let $B=B(0,1)\subset\Omega$ and $1<p<\infty$.
Assume the same conditions as in Theorem $\ref{thm:1.1}$.
Suppose that $u_\varepsilon\in W^{1,p}(B)$
satisfies $\mathcal{L}_\varepsilon u_\varepsilon = 0$ in $B$.
Then for any $\kappa\in(0,d)$, there holds an uniform estimate
\begin{equation}\label{pri:1.11}
  \int_{B(x,r)}|\nabla u_\varepsilon|^p dz
\lesssim \Big(\frac{r}{R}\Big)^{d-\kappa}\int_{B(x,R)} |\nabla u_\varepsilon|^p dz
\end{equation}
for any $\varepsilon\leq r\leq R\leq (1/2)$, where the up-to constant depends on
$\mu_1,\mu_2,\mu_3,p,d$ and $\kappa$.
\end{thm}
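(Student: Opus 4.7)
I would prove \eqref{pri:1.11} by an Avellaneda--Lin style iteration built on a geometric one-step excess decay. First, the $p$-homogeneity \eqref{a:1} together with the $\mathbb Z^d$-periodicity of $A$ makes the rescaling $\tilde u(y)=R^{-1}u_\varepsilon(x+Ry)$ admissible: it carries any ball $B(x,R)$ to the unit ball $B(0,1)$ and only replaces the oscillation scale by $\varepsilon/R$. Thus it suffices to prove the following one-step claim at unit scale: for every $\kappa\in(0,d)$ there exist $\theta_0=\theta_0(\kappa)\in(0,1/4)$ and $\varepsilon_0=\varepsilon_0(\kappa)\in(0,\theta_0)$ such that whenever $\varepsilon\le\varepsilon_0$ and $\mathcal L_\varepsilon u_\varepsilon=0$ in $B(0,1)$,
\begin{equation*}
\dashint_{B(0,\theta_0)}|\nabla u_\varepsilon|^p\,dz\;\le\;\theta_0^{-\kappa}\dashint_{B(0,1)}|\nabla u_\varepsilon|^p\,dz.
\end{equation*}
Iterating this inequality at scales $\theta_0^k$ (the effective oscillation at level $k$ being $\varepsilon/\theta_0^k\le\varepsilon_0$ while $r\ge\varepsilon$) yields \eqref{pri:1.11} on the dyadic range, and the residual sliver $r\in[\varepsilon,C\varepsilon]$ is absorbed into the implicit constant.

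\textbf{One-step comparison with the homogenized problem.} For the one-step claim I would introduce $u_0$ solving $\mathcal L_0 u_0=0$ in $B(0,1)$ with $u_0=u_\varepsilon$ on $\partial B(0,1)$ (possibly on a slightly shrunken ball to gain smoothness at the boundary). The coercivity and growth in \eqref{c:2} together with a standard comparison yield the energy bound $\|\nabla u_0\|_{L^p(B(0,1))}\lesssim\|\nabla u_\varepsilon\|_{L^p(B(0,1))}$, while Lemma \ref{lemma:3.3} supplies the interior $C^{1,\vartheta}$ estimate
\begin{equation*}
\sup_{B(0,1/2)}|\nabla u_0|\;+\;[\nabla u_0]_{C^{0,\vartheta}(B(0,1/2))}\;\lesssim\;\Big(\dashint_{B(0,1)}|\nabla u_\varepsilon|^p\,dz\Big)^{1/p}.
\end{equation*}
Hence $\int_{B(0,\theta_0)}|\nabla u_0|^p\lesssim\theta_0^d\int_{B(0,1)}|\nabla u_\varepsilon|^p$. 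With $V=\nabla u_0+D^{h}\bigl(\varepsilon N(\cdot/\varepsilon,\varphi_\varepsilon)\bigr)$, the pointwise bound $\|\nabla N(\cdot,\xi)\|_{L^\infty}\lesssim|\xi|$ from \eqref{pri:1.3} and the $L^\infty$-boundedness of $S_\varepsilon$ give $|V|\lesssim|\nabla u_0|+|\varphi_\varepsilon|$, so the same decay transfers to $V$ after integrating over $B(0,\theta_0)$.

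\textbf{Controlling the residual via Theorem \ref{thm:1.1}.} Splitting $|\nabla u_\varepsilon|^p\lesssim|V|^p+|\nabla u_\varepsilon-V|^p$, it remains to show
\begin{equation*}
\|\nabla u_\varepsilon-V\|_{L^p(B(0,1))}^{p}\;\lesssim\;\varepsilon^{p\eta}\int_{B(0,1)}|\nabla u_\varepsilon|^p\,dz
\end{equation*}
for some $\eta=\eta(p,d,\vartheta)>0$. Theorem \ref{thm:1.1} yields the right-hand side as a degenerate piece $|\Omega_{M,\varepsilon}|^{\beta}\|\nabla u_0\|_{L^q(\Omega_{M,\varepsilon})}$ plus an algebraic rate piece $\varepsilon^{\sigma}\|\nabla u_0\|_{L^q}$ with $\sigma>0$ prescribed by \eqref{range:1}--\eqref{range:3}. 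For the algebraic term I would pass from $L^q$ to $L^p$ via Meyers's higher integrability (Remark \ref{remark:4.1}) on a slightly larger ball and then use the energy estimate. For the degenerate term I would invoke the second line of \eqref{pri:1.8}: on $\Omega_{M,\varepsilon}$ one has $|\nabla u_0|\le M\varepsilon^{\vartheta}$, and because $\beta+1/q=1/p$, the whole quantity is bounded by $M\varepsilon^{\vartheta}|B(0,1)|^{1/p}$; the constant $M=\|\nabla u_0\|_{C^{0,\vartheta}}$ is in turn controlled by $\|\nabla u_\varepsilon\|_{L^p(B(0,1))}$ through Lemma \ref{lemma:3.3} and the energy comparison.

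\textbf{Conclusion and principal obstacle.} Inserting these bounds into the split yields $\dashint_{B(0,\theta_0)}|\nabla u_\varepsilon|^p\le C_\ast(1+\theta_0^{-d}\varepsilon^{p\eta})\dashint_{B(0,1)}|\nabla u_\varepsilon|^p$; choosing $\theta_0$ small enough that $2C_\ast\le\theta_0^{-\kappa}$, and then $\varepsilon$ small enough that $\theta_0^{-d}\varepsilon^{p\eta}\le1$, closes the one-step improvement, after which iteration and the trivial cover of the last scale finish the proof. The main obstacle is the third step: the error in Theorem \ref{thm:1.1} must be turned into an honest power of $\varepsilon$ controlled purely by $\|\nabla u_\varepsilon\|_{L^p}$, which requires combining the $|\Omega_{M,\varepsilon}|^\beta$ trade-off, the $C^{1,\vartheta}$ regularity of $u_0$, the Meyers upgrade from $L^q$ to $L^p$, and a mild shrinkage of the working domain to ensure the regularity hypothesis \eqref{a:5} is available with scale-invariant constants. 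The resulting exponent $\eta$ is strictly less than $1$, which is precisely why one only recovers a Hölder decay with arbitrarily small loss $\kappa>0$ rather than a clean large-scale Lipschitz estimate.
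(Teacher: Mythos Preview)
Your proposal is correct and follows the same Avellaneda--Lin iteration philosophy as the paper, but it differs from the paper's proof in two noteworthy respects. First, the paper does not iterate on the gradient average $\dashint_{B_r}|\nabla u_\varepsilon|^p$ directly; instead it works with the functional $H(r,\sigma,u_\varepsilon)=r^{-\sigma}\bigl(\dashint_{B_r}|u_\varepsilon|^p\bigr)^{1/p}$ and uses the $L^p$ approximating Lemma~\ref{lemma:4.1} (equivalently Corollary~\ref{corollary:1.1}) for $u_\varepsilon-v_r$, converting back to gradients only at the very end via Caccioppoli and Poincar\'e. This avoids having to control the corrector piece $D^h(\varepsilon N)$ inside the small ball altogether. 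Second, and more substantively, the paper treats the degenerate-set contribution $|\Omega^r_{M,\varepsilon/r}|^\beta$ via the \emph{first} line of \eqref{pri:1.8}: it uses the trivial volume bound $|\Omega^r_{M,\varepsilon/r}|\lesssim r^d$, obtaining a factor $r^{d\beta}$, and then introduces an upper cutoff $r_0$ small enough that $C\theta^{-\sigma-d/p}r_0^{d\beta}\le 1/4$; the smallness is propagated by an integral iteration $\int H(r)\,dr/r$ rather than a discrete one. Your choice of the second line of \eqref{pri:1.8}, bounding the term by $M\varepsilon^{\vartheta}$ with $M$ controlled through Lemma~\ref{lemma:3.3} and the energy comparison, is arguably cleaner in that it yields a genuine power of $\varepsilon$ and dispenses with the extra scale $r_0$, at the price of leaning more heavily on the interior $C^{1,\vartheta}$ regularity (and the domain shrinkage you already flagged) at every step. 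Either route closes the argument.
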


\noindent\emph{Comment 5}. Recently, the large-scale estimates received important developments
in quantitative homogenization theory, and without attempting to be exhaustive
we refer the reader to \cite{ASC,AS,GNF1,KLS,S5} for more details in
periodic and non-periodic settings, while the first systematic study in this area
went back to M. Avellaneda, F. Lin \cite{MAFHL}. Concerning the equation $(\text{D}_\varepsilon)$,
we believe that the estimate $\eqref{pri:1.11}$ is still true for $\kappa=0$
(i.e., large-scale Lipschitz estimates), and may further develop
a large-scale $L^q$ estimate with $q>p$ in an average sense. However,
we plan to investigate this topic in a separated work.
Since the method developed for large-scale estimates are based upon an iteration argument,
it is a kind of nonlinear approaches, which in fact is independent of the structure of equations.
Roughly speaking, the proof of Theorem $\ref{thm:1.2}$
follows the same strategy introduced in the previous work \cite{WXZ1}
(see \cite{S4,S5} for linear systems).
Here, we just mention that
the crucial step between Theorems $\ref{thm:1.1}$ and
$\ref{thm:1.2}$ is based upon the following corollary of Theorem $\ref{thm:1.1}$, which
will be concretely known as an approximating lemma (see Lemma $\ref{lemma:4.1}$).

\begin{corollary}\label{corollary:1.1}
Assume the same conditions as in Theorem $\ref{thm:1.1}$. If the left-hand side of
the estimates $\eqref{pri:1.5}$,  $\eqref{pri:1.4}$ and $\eqref{pri:1.6}$ is
replaced by $\|u_\varepsilon - u_0\|_{L^p(\Omega)}$, respectively.
Then, we have the same conclusions.
\end{corollary}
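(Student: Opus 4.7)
The plan is to compare $u_\varepsilon$ to the first-order two-scale approximation
\[
v_\varepsilon(x) := u_0(x) + \varepsilon N\bigl(x/\varepsilon, \varphi_\varepsilon(x)\bigr),
\]
and reduce the $L^p$ estimate for $u_\varepsilon - u_0$ to the gradient estimate of Theorem $\ref{thm:1.1}$ by a standard Poincaré inequality. Because $\psi_{4\varepsilon}$ is supported away from $\partial\Omega$ and the smoothing operator $S_\varepsilon$ enlarges supports by at most $\varepsilon$, the function $\varphi_\varepsilon = S_\varepsilon(\psi_{4\varepsilon}\nabla u_0)$ vanishes in a neighborhood of $\partial\Omega$; combined with the homogeneity $N(y,0)=0$ obtained from $\eqref{a:1}$, this yields $v_\varepsilon = u_0$ on $\partial\Omega$, and hence $u_\varepsilon - v_\varepsilon \in W_0^{1,p}(\Omega)$.

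By the triangle inequality,
\[
\|u_\varepsilon - u_0\|_{L^p(\Omega)} \le \|u_\varepsilon - v_\varepsilon\|_{L^p(\Omega)} + \varepsilon\|N(\cdot/\varepsilon,\varphi_\varepsilon)\|_{L^p(\Omega)}.
\]
The second summand is controlled directly: using $|N(y,\xi)| \le C|\xi|$ from $\eqref{pri:1.3}$ and the $L^p$-boundedness of $S_\varepsilon$, it is bounded by $C\varepsilon\|\varphi_\varepsilon\|_{L^p(\Omega)} \le C\varepsilon\|\nabla u_0\|_{L^p(\Omega)}$, which is dominated by the right-hand side of each of $\eqref{pri:1.5}$, $\eqref{pri:1.4}$, $\eqref{pri:1.6}$ (since the exponent of $\varepsilon$ appearing there is strictly less than $1$).

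For the first summand, Poincaré's inequality gives
\[
\|u_\varepsilon - v_\varepsilon\|_{L^p(\Omega)} \lesssim \|\nabla u_\varepsilon - \nabla v_\varepsilon\|_{L^p(\Omega)} \le \|\nabla u_\varepsilon - V\|_{L^p(\Omega)} + \|V - \nabla v_\varepsilon\|_{L^p(\Omega)}.
\]
The first term on the right is exactly the quantity controlled by Theorem $\ref{thm:1.1}$. The second is the discretization defect
\[
V - \nabla v_\varepsilon = D^{h}\bigl(\varepsilon N(\cdot/\varepsilon,\varphi_\varepsilon)\bigr) - \nabla\bigl(\varepsilon N(\cdot/\varepsilon,\varphi_\varepsilon)\bigr),
\]
which I would bound using the $C^{0,\theta}$ regularity of $\nabla_y N$ from $\eqref{pri:1.3}$ together with the mollification regularity of $\varphi_\varepsilon$; with $h=\varepsilon^{\tau}$ and $\tau>1$, the standard pointwise estimate $|D^h f - \nabla f|\lesssim h^{\theta}\|\nabla f\|_{C^{0,\theta}}$ yields a bound of order $\varepsilon^{(\tau-1)\theta}$, which is absorbed in the systematic $o(\varepsilon)$ error already tolerated in Theorem $\ref{thm:1.1}$.

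The main obstacle is that $N(y,\cdot)$ is only $\alpha$-Hölder in $\xi$, so $\nabla v_\varepsilon$ does not exist classically through the chain rule. I would handle this in one of two ways: (i) mollify $N$ in its second argument at a scale $\eta\ll\varepsilon$, obtaining a smooth $v_\varepsilon^{\eta}$ to which the above argument applies verbatim, then pass to the limit $\eta\to 0$ while controlling the additional error through the Hölder modulus of $N$ and the uniform bound $\|\varphi_\varepsilon\|_{L^\infty}\le M$; or (ii) keep $v_\varepsilon$ as a pointwise object and split the $x$-increments of $N(x/\varepsilon,\varphi_\varepsilon(x))$ into a Lipschitz-in-$y$ piece (handled by $\eqref{pri:1.3}$) and an $\alpha$-Hölder-in-$\xi$ piece (handled by the Lipschitz bound $\|\nabla\varphi_\varepsilon\|_{L^\infty}\lesssim M/\varepsilon$ coming from the scale-$\varepsilon$ mollification). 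Either approach keeps the discretization defect within the $o(\varepsilon)$ budget and completes the reduction to Theorem $\ref{thm:1.1}$.
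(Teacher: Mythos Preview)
Your approach runs into exactly the obstruction the paper flags in Comment~6: for $p\neq 2$ the corrector $N(y,\cdot)$ is only $C^\alpha$ in its second argument with $\alpha<1$, so the composite $x\mapsto \varepsilon N(x/\varepsilon,\varphi_\varepsilon(x))$ does \emph{not} lie in $W^{1,p}(\Omega)$. Hence $v_\varepsilon\notin W^{1,p}(\Omega)$, the Poincar\'e step $\|u_\varepsilon-v_\varepsilon\|_{L^p}\lesssim\|\nabla u_\varepsilon-\nabla v_\varepsilon\|_{L^p}$ is unavailable, and the term $\|V-\nabla v_\varepsilon\|_{L^p}$ you write down is not finite. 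Your workaround (i), mollifying $N$ in $\xi$ at scale $\eta$, produces the genuine chain-rule term $\varepsilon\,(\partial_\xi N^\eta)(x/\varepsilon,\varphi_\varepsilon)\nabla\varphi_\varepsilon$ of order $\eta^{\alpha-1}$ (with $\|\nabla\varphi_\varepsilon\|_{L^\infty}\sim\varepsilon^{-1}$ in the boundary layer); balancing this blow-up against the $O(\eta^\alpha)$ mollification errors so as to stay inside the error budget of Theorem~\ref{thm:1.1} is a nontrivial double-scale computation that you have not carried out, and the required $L^\infty_y$ control on $\nabla_yN(\cdot,\xi)-\nabla_yN(\cdot,\xi')$ is not established in the paper. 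Workaround (ii) still presupposes that $\nabla v_\varepsilon$ exists in $L^p$, which is precisely what fails.

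The paper's argument sidesteps this by never forming $\nabla v_\varepsilon$. Since $u_\varepsilon-u_0\in W_0^{1,p}(\Omega)$, one passes to the negative norm,
$\|u_\varepsilon-u_0\|_{L^p(\Omega)}=\|\nabla_i(u_\varepsilon-u_0)\|_{W^{-1,p}(\Omega)}$,
and splits
\[
\nabla_i(u_\varepsilon-u_0)=K_i+D_i^{h}\big(\varepsilon N(\cdot/\varepsilon,\varphi_\varepsilon)\big).
\]
The first piece satisfies $\|K_i\|_{W^{-1,p}}\le\|K_i\|_{L^p}$, controlled by Theorem~\ref{thm:1.1}. For the second piece one integrates the difference quotient by parts in the duality pairing,
\[
\big\|D_i^{h}\big(\varepsilon N(\cdot/\varepsilon,\varphi_\varepsilon)\big)\big\|_{W^{-1,p}}
=\sup_{\|\phi\|_{W_0^{1,p'}}\le 1}\Big|\int_\Omega \varepsilon N(\cdot/\varepsilon,\varphi_\varepsilon)\,D_i^{-h}\phi\,dx\Big|
\lesssim\varepsilon\|N(\cdot/\varepsilon,\varphi_\varepsilon)\|_{L^p}\lesssim\varepsilon\|\nabla u_0\|_{L^p},
\]
which gains the factor $\varepsilon$ without ever differentiating $N$ in $\xi$; see the proof of Lemma~\ref{lemma:4.1}. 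This is the step your Poincar\'e route cannot reproduce.
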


\noindent\emph{Comment 6}.
Here we just mention that Corollary $\ref{corollary:1.1}$ can not be derived from
Poincar\'e's inequality. Instead, it follows from the fact
$\|u_\varepsilon - u_0\|_{L^p(\Omega)}=  \|\nabla_i (u_\varepsilon - u_0)\|_{W^{-1,p}(\Omega)}$
with $i=1,\cdots,d$,
and the related details have been shown in the proof of Lemma $\ref{lemma:4.1}$.

\subsection{Outline of the proof of Theorem $\ref{thm:1.1}$}\label{subsec:1}

In this subsection, we show a strategy of the proof, which consists of four ingredients in order.

\noindent\textbf{Ingredient 1}. We manage to obtain the following formula,
which is the key ingredient, analogy to the estimate $\eqref{pri:1.1}$ for the non-degenerated case.
\begin{equation}\label{pri:1.10}
\begin{aligned}
\|\nabla u_\varepsilon -V\|_{L^p(\Omega)}^p
\leq  \bigg|\int_\Omega
&\Big[\underbrace{\widehat{A}(\nabla u_0)
- \widehat{A}(\varphi)}_{\text{P}_1:~\text{involving~the~estimate like $\eqref{pri:1.9}$}} \\
& +\underbrace{\widehat{A}(\varphi) - A(x/\varepsilon,\varphi
+\nabla_yN(x/\varepsilon,\varphi))}_{\text{P}_2:~\text{handed~by~the~flux~tensor}}\\
& +\underbrace{A(x/\varepsilon,\varphi+\nabla_yN(x/\varepsilon,\varphi))
- A(x/\varepsilon,V)}_{
\text{P}_3:~\text{producing~the remainder~terms}}\Big]
\cdot \big(\nabla u_\varepsilon-V\big) dx\bigg|
+ \underbrace{o(\varepsilon)}_{\text{a systematic error}}
\end{aligned}
\end{equation}
(where $y=x/\varepsilon$).

\vspace{0.1cm}
\noindent\textbf{Ingredient 2}. Estimating the term $P_1$ forced us to find the weighted type estimate $\eqref{pri:1.9}$,
which means that the shift argument still works for nonlinear equations,
at least in the small scales.

\vspace{0.1cm}
\noindent\textbf{Ingredient 3}. The core idea comes from dealing with the term $P_3$ of $\eqref{pri:1.10}$,
and we realized the importance of the following equality
\begin{equation}\label{eq:1.3}
D_i^{h}(\varepsilon N(y,\varphi)) =
\underbrace{\int_0^1 \partial_{i} N(y+t(h/\varepsilon)e_i,\varphi) dt}_{R_1}
+\underbrace{\varepsilon h^{\alpha-1}
\Phi(y,\varphi)\big|D_i^{h}\varphi\big|^\alpha}_{R_2},
\end{equation}
and it inspires us to discover that
there is an opportunity to observe the rate of convergence
by adjusting the size of $h$. A natural thinking is to set $h=\varepsilon^\tau$, and reduce
the problem to find out a suitable range of $\tau$, independent of $\varepsilon$.
Note that we have $R_1 = 0$ by periodicity of the corrector when $\tau =1$. Since we have to
estimate the quantity
\begin{equation}\label{eq:1.4}
 \|D_i^{h}(\varepsilon N(y,\varphi)) - \nabla_y N(y,\varphi)\|_{L^p(\Omega)}
\end{equation}
produced by the term $P_3$ in $\eqref{pri:1.10}$, the higher regularity of
the corrector turns to be crucial in these computations,
and it is one of roles that the imposed assumption $\eqref{a:4}$ plays.
Meanwhile, the related estimate of $\Phi$ in $R_2$ leads to most of complicated parts
in the paper.

\vspace{0.1cm}
\noindent\textbf{Ingredient 4}. Concerning $P_2$ in $\eqref{pri:1.10}$, we also employ the antisymmetry property of
the flux corrector (see Lemma $\ref{lemma:3.5}$). The difference quotient presented here
brings us more technical difficulties compared to the computation of
$T_2$ in $\eqref{pri:1.1}$
(see Lemma $\ref{lemma:3.5}$).
We finally mention that those arguments developed for the corrector in $\eqref{eq:1.3}$ and $\eqref{eq:1.4}$
also worked for the flux corrector, although
a little more efforts and carefulness have to be paid.

\subsection{Notation}\label{subsec:1.1}

Throughout the paper, we make use of the following notation:
\begin{itemize}
  \item $d\geq 1$ is the dimension, and $1<p<\infty$ is the characterization of the equation type,
and $\alpha,\beta,\gamma$ are defined in $\eqref{notation:1}$, and $\delta\in(0,1)$ is known as
a Meyer's index;
  \item $\Omega\subset\mathbb{R}^d$ is a bounded Lipschitz domain with $r_0$ being
the diameter of $\Omega$, and
  $\Sigma_{r}=\{x\in\Omega:\text{dist}(x,\partial\Omega)>r\}$ represents the
so-called ``co-layer'' part of $\Omega$, and its layer part is denoted
by $\Omega\setminus\Sigma_{r}$, and $\psi_r\in C_0^1(\Omega)$ is a cut-off function associated with
$\Sigma_r$,
satisfying
\begin{equation}\label{notation:2}
 \psi_r = 1 \quad\text{in}~\Sigma_{2r},
 \qquad
 \psi_r = 0 \quad\text{outside}~\Sigma_{2r},
 \qquad
 |\nabla \psi_r|\leq C/r;
\end{equation}
  \item $D^{h} = (D_1^h,\cdots,D_d^h)$, where the $i^{\text{th}}$ difference quotient of size $h$ is
defined by $D_i^h (f)(x) = \frac{f(x+he_i)-f(x)}{h}$ for any $x\in\Sigma_{\varepsilon}$ with
$0<h<\varepsilon$;
\item $\nabla = (\nabla_1,\cdots,\nabla_d)$ with
$\nabla_i = \nabla_{x_i} = \partial/\partial x_i$, and $\nabla_y$ (or $\text{div}_y$)
means we take a gradient (or divergence) with respective to the variable $y$,
when we need to emphasize that it is different from the default case $\nabla$.
The notation $\text{div}(F) =\nabla \cdot F = \sum_{i=1}^d\nabla_i F_i$,
and $\nabla^2 f = \nabla^2_{ij} f = \frac{\partial^2 f}{\partial x_i\partial x_j}$
denotes the divergence of $F$, and Hessian matrix of $f$, respectively. In general,
we denotes $\nabla_{z_i} N(x/\varepsilon+\varpi,\varphi)$
by $\partial_i N(x/\varepsilon+\varpi,\varphi)$ without confusion, where $z=x/\varepsilon+\varpi$;

\item $o(1)$ represents an infinitesimal quantity, and $o(\varepsilon)$ is
a higher order infinitesimal of $\varepsilon$ as $\varepsilon\to 0$;
\item $\big<A,B\big> = \sum_{i=1}^{d} A_i B_i$,
and $|\xi| = \big<\xi,\xi\big>^{\frac{1}{2}}$, and $|\Omega|$ represents the volume of $\Omega$,
and $\dashint_{\Omega} = \frac{1}{|\Omega|}\int_{\Omega}$,
and $Y_{\varepsilon}^k = \varepsilon(k + Y)$ for $k\in \mathbb{Z}^d$,
where $Y=(-\frac{1}{2},\frac{1}{2}]$;
\item $\lesssim$ and $\gtrsim$ stand for $\leq$ and $\geq$ up to a multiplicative constant which
may depend on any given constants such as $\mu_0$,$\mu_1$,$\mu_2$,$d$,$p$,$q$,
$\theta$,$\vartheta$, $\kappa$,
$\delta$,$r_0$, but independent of $\varepsilon$, and rescaling parameters such as $r,R\in(0,1)$,
and we use $\ll$ instead of $\lesssim$ to indicate that multiplicative constant
is quite close to zero (although it is still a positive number).
\end{itemize}

\subsection{Structure of the paper}
In Section $\ref{sec:2}$, we mainly addressed the correctors and flux correctors.
Concerned with the properties of the corrector $N(\cdot,\xi)$, we verified that
$N(\cdot,\cdot)\in C^{1,\theta}\times C^{\alpha}$, and in
particular, gave a proof of $\eqref{pri:1.3}$. As an application,
the condition $\eqref{c:2}$ was partially proved. Some new tricks were dedicated to
estimating the quantity $\|D^h(\varepsilon N(\cdot/\varepsilon,\varphi))\|_{L^p(\Omega)}$.
In terms of the flux corrector $E(\cdot,\xi)$, we similarly proved that
$E(\cdot,\cdot)\in C^{1,\rho}\times C^{\gamma}$, where $\rho=\min\{\theta,(p-1)\theta\}$,
while we merely demonstrated $D^h(\varepsilon E(\cdot/\varepsilon,\varphi))\in L^r(\Omega)$ with
$1<r\leq p/(p-1)$. In the end of this section, we defined the smoothing operator and stated its
properties. In Section $\ref{sec:3}$, the purpose was to show a proof of Theorem $\ref{thm:1.1}$.
The structure of this section was in fact consistent with that shown in Subsection $\ref{subsec:1}$.
In Section $\ref{sec:4}$, a main work was to show the so-called approximating lemma
(see Lemma $\ref{lemma:4.1}$), and then we gave a proof of Theorem $\ref{thm:1.2}$, as well as,
Caccioppolli's inequality (see Lemma $\ref{lemma:4.2}$). Finally, in Section $\ref{section:5}$
we constructed some special examples to satisfy the assumption $\eqref{c:2}$.

The interest of the present paper is
finding a new way (independent of numerical correctors) to obtain the rate of
convergence for nonlinear homogenization problems, which could be further applied
to studying large scale estimates. As a theory, it is more or less self-enclosed as we hoped.
However, it can not directly help us
understand the mechanism of homogenization for nonlinear problems
in a little larger classes of operators, since this method
does not tell us why the homogenized operator runs out of the class
 $\eqref{c:1}$ in most cases, and we will make an effort for this direction
in future.

\section{Preliminaries}\label{sec:2}


\subsection{Corrector and its properties}

\begin{lemma}\label{lemma:2.1}
Assume that $A$ satisfies the conditions $\eqref{a:1}$, $\eqref{c:1}$.
Let $N(\cdot,\xi)\in W_{per}^{1,p}(Y)$ be the weak solution to the equation $\eqref{pde:1.2}$, and then
for any $\xi\in\mathbb{R}^d$, we
have the following estimate
\begin{equation}\label{pri:2.1}
 \dashint_{Y} |N(\cdot,\xi)|^p + \dashint_{Y} |\nabla N(\cdot,\xi)|^p \leq C|\xi|^p.
\end{equation}
Moreover, by setting
\begin{equation}\label{eq:2.2}
P(y,\xi) = \xi + \nabla_y N(y,\xi)
\end{equation}
one may derive
\begin{equation}\label{pri:2.2}
\Big(\dashint_{Y} |P(\cdot,\xi)-P(\cdot,\xi^\prime)|^p \Big)^{\frac{1}{p}}\leq
C\left\{\begin{aligned}
& |\xi-\xi^\prime|^{\frac{2}{p}}\big(|\xi|+|\xi^\prime|\big)^{1-\frac{2}{p}}
&~~~\text{if}~~& 2\leq p <\infty;\\
&|\xi-\xi^\prime|^{\frac{1}{3-p}}\big(|\xi|+|\xi^\prime|\big)^{\frac{2-p}{3-p}} &~~~\text{if}~~& 1<p\leq 2,
\end{aligned}\right.
\end{equation}
where $C$ depends only on $\mu_0,\mu_1,p$ and $d$.
\end{lemma}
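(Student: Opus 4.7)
The plan is to establish both assertions by using variants of the corrector itself as test functions in the cell problem, and then exploiting the monotone structure (c:1).

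For the energy estimate (pri:2.1), I would first observe that the homogeneity (a:1) forces $A(y,0)=0$, so taking $\xi'=0$ in (c:1) yields the cleaner bounds $\mu_0|\eta|^p\leq \langle A(y,\eta),\eta\rangle\leq \mu_1|\eta|^p$ and, by a standard consequence of the monotonicity structure, the pointwise Lipschitz estimate $|A(y,\eta)-A(y,\eta')|\leq C(|\eta|+|\eta'|)^{p-2}|\eta-\eta'|$. Testing (pde:1.2) with $N(\cdot,\xi)$ and writing $\nabla N = P-\xi$ gives $\int_Y\langle A(y,P),P\rangle\,dy=\int_Y\langle A(y,P),\xi\rangle\,dy$. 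The coercive bound below together with $|A(y,P)|\leq C|P|^{p-1}$ and Hölder's inequality on the right-hand side yields $\int_Y|P|^p\leq C|\xi|(\int_Y|P|^p)^{(p-1)/p}$, whence $\int_Y|P|^p\leq C|\xi|^p$. The bound on $\int_Y|N|^p$ then follows from Poincaré--Wirtinger on the torus, since $\dashint_Y N=0$.

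For (pri:2.2), I subtract the cell equations for $\xi$ and $\xi'$ and test with $N(\cdot,\xi)-N(\cdot,\xi')$. Using $\nabla_y N(\cdot,\xi)-\nabla_y N(\cdot,\xi')=P(\cdot,\xi)-P(\cdot,\xi')-(\xi-\xi')$ produces the master identity
\[
\int_Y\langle A(y,P(\xi))-A(y,P(\xi')),P(\xi)-P(\xi')\rangle\,dy=\int_Y\langle A(y,P(\xi))-A(y,P(\xi')),\xi-\xi'\rangle\,dy.
\]
The coercivity bounds the left-hand side below by $\mu_0 J$, where $J:=\int_Y(|P(\xi)|+|P(\xi')|)^{p-2}|P(\xi)-P(\xi')|^2\,dy$, while the pointwise Lipschitz bound on $A$ controls the right-hand side.

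For the easier case $p\geq 2$, Cauchy--Schwarz on the right-hand side followed by Jensen's inequality with exponent $(p-2)/p\leq 1$ and the energy estimate (pri:2.1) give $J\leq C|\xi-\xi'|^2(|\xi|+|\xi'|)^{p-2}$. Since $p-2\geq 0$ and $|P(\xi)-P(\xi')|\leq |P(\xi)|+|P(\xi')|$, we have the pointwise domination $|P(\xi)-P(\xi')|^p\leq (|P(\xi)|+|P(\xi')|)^{p-2}|P(\xi)-P(\xi')|^2$, so integrating and taking the $p$-th root finishes this regime.

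The main obstacle is the subquadratic range $1<p\leq 2$, where $J$ does not control $\|P(\xi)-P(\xi')\|_{L^p}^p$ directly. I would use the interpolation identity $|a-b|^p=[(|a|+|b|)^{p-2}|a-b|^2]^{p/2}(|a|+|b|)^{p(2-p)/2}$ together with Hölder's inequality (exponents $2/p$ and $2/(2-p)$) to obtain $\int_Y|P(\xi)-P(\xi')|^p\leq C J^{p/2}(|\xi|+|\xi'|)^{p(2-p)/2}$. To bound $J$ itself, I factor $(|P(\xi)|+|P(\xi')|)^{p-2}|P(\xi)-P(\xi')|\leq J_y^{s}(|P(\xi)|+|P(\xi')|)^{p-1-ps}$ (using $|P(\xi)-P(\xi')|^{1-2s}\leq(|P(\xi)|+|P(\xi')|)^{1-2s}$) with the tuned choice $s=(p-1)/2\in(0,\tfrac{1}{2}]$; Hölder with exponents $1/s,1/(1-s)$ plus Jensen's inequality (the auxiliary exponent $(p-1-ps)/(1-s)=(p-1)(2-p)/(3-p)$ is $\leq p$) and (pri:2.1) yield $J\leq C|\xi-\xi'|^{2/(3-p)}(|\xi|+|\xi'|)^{(p-1)(2-p)/(3-p)}$. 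Substituting back and simplifying the exponents gives exactly the claimed estimate with rate $1/(3-p)$ and weight $(2-p)/(3-p)$; the delicate point is precisely this selection of $s$ so that the $|\xi-\xi'|$ exponent comes out right after combining with the interpolation.
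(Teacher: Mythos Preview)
Your argument is correct and is precisely the standard energy/testing scheme for monotone cell problems: test the cell equation with $N(\cdot,\xi)$ for \eqref{pri:2.1}, subtract and test with $N(\cdot,\xi)-N(\cdot,\xi')$ for \eqref{pri:2.2}, then split cases $p\ge 2$ and $1<p\le 2$ via the natural H\"older/interpolation inequalities. The paper does not give its own proof but simply cites \cite[Lemma~3.4]{MD}; your write-up supplies exactly the self-contained argument that reference contains, including the key step for $1<p\le 2$ of choosing the auxiliary exponent $s=(p-1)/2$ to close the estimate on $J$. One small remark: you invoke the pointwise growth bound $|A(y,\eta)-A(y,\eta')|\le C(|\eta|+|\eta'|)^{p-2}|\eta-\eta'|$ as a ``standard consequence'' of \eqref{c:1}; strictly speaking \eqref{c:1} only bounds the inner product, but the paper itself uses this pointwise bound freely as part of the structural hypotheses (see e.g.\ the proofs of Lemmas~\ref{lemma:2.3} and~\ref{lemma:3.2}), so this is consistent with the framework and not a gap.
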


\begin{proof}
These results have already been in \cite[Lemma 3.4]{MD}.
\end{proof}

\begin{lemma}\label{lemma:2.6}
Suppose that $A$ satisfies the conditions $\eqref{a:1}$ and $\eqref{c:1}$. Then we conclude that
$N(\cdot,\xi)$ is H\"older continuous with an upper bound estimate
\begin{equation}\label{pri:2.19}
  \|N(\cdot,\xi)\|_{L^\infty(Y)}\lesssim |\xi|
\end{equation}
for any $\xi\in\mathbb{R}^d$.
Moreover, if we additionally impose the smoothness condition $\eqref{a:4}$, then there exist
$\theta\in (0,1)$ and a constant $C$, depending on $\mu_0,\mu_1,\mu_2,p$ and $d$, such that
\begin{equation}\label{pri:2.9}
\|P(\cdot,\xi)\|_{L^\infty(Y)}
+[P(\cdot,\xi)]_{C^{0,\theta}(Y)}
\leq C\Big(\dashint_{Y}|P(\cdot,\xi)|^p\Big)^{1/p}
\leq C |\xi|
\end{equation}
holds for any $\xi\in\mathbb{R}^d$.
\end{lemma}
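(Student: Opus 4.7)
The plan is to reduce both statements to classical interior regularity results for solutions of quasilinear equations of $p$-Laplace type, exploiting the periodicity of the corrector.

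First I would treat the $L^\infty$ bound on $N(\cdot,\xi)$. By $Y$-periodicity, $N(\cdot,\xi)$ extends to a function on $\mathbb{R}^d$ satisfying $\operatorname{div}_y A(y,\xi+\nabla_y N(\cdot,\xi))=0$ globally. Equivalently, the lifted function $u(y):=\xi\cdot y+N(y,\xi)$ is a local weak solution on $\mathbb{R}^d$ of $\operatorname{div}_y A(y,\nabla_y u)=0$, and under the structure conditions $\eqref{a:1}$, $\eqref{c:1}$, the standard De Giorgi--Moser iteration for $p$-Laplace type equations gives a local $L^\infty$ bound
\[
\sup_{Y}|N(\cdot,\xi)|\le C\Bigl(\dashint_{2Y}|N(\cdot,\xi)|^p\Bigr)^{1/p}.
\]
Periodicity then collapses the average over $2Y$ to an average over $Y$, and Lemma $\ref{lemma:2.1}$ yields $\dashint_{Y}|N(\cdot,\xi)|^p\lesssim |\xi|^p$, which proves $\eqref{pri:2.19}$.

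Next I would address $\eqref{pri:2.9}$. Under the additional smoothness condition $\eqref{a:4}$, the vector field $A(y,\xi)$ is Lipschitz in the $y$ variable, so the lifted solution $u=\xi\cdot y+N(y,\xi)$ falls into the scope of the classical interior $C^{1,\theta}$ regularity theory for $p$-Laplace type equations (DiBenedetto, Tolksdorf, Lieberman, Ural'tseva). Applied on concentric cubes $Y\subset 2Y$, this theory produces $\theta=\theta(\mu_0,\mu_1,\mu_2,p,d)\in(0,1)$ together with
\[
\|\nabla_y u\|_{L^\infty(Y)}+[\nabla_y u]_{C^{0,\theta}(Y)}\le C\Bigl(\dashint_{2Y}|\nabla_y u|^p\Bigr)^{1/p}.
\]
Since $\nabla_y u=P(\cdot,\xi)$ is $Y$-periodic, the average over $2Y$ equals the average over $Y$, which delivers the first inequality of $\eqref{pri:2.9}$.

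Finally I would close the loop by bounding the $L^p$ average of $P$. Directly from $\eqref{eq:2.2}$ and $\eqref{pri:2.1}$,
\[
\Bigl(\dashint_{Y}|P(\cdot,\xi)|^p\Bigr)^{1/p}\le |\xi|+\Bigl(\dashint_{Y}|\nabla_y N(\cdot,\xi)|^p\Bigr)^{1/p}\lesssim |\xi|,
\]
which gives the second inequality of $\eqref{pri:2.9}$. The only nontrivial point to verify is that the cited interior regularity results apply uniformly in the parameter $\xi$; by the homogeneity $\eqref{a:1}$ and uniqueness of correctors coming from strict monotonicity, one has $N(y,\lambda\xi)=\lambda N(y,\xi)$ for $\lambda>0$, so it is enough to treat unit vectors $\xi$ and the constants in the classical theorems stay uniform. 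The main technical obstacle is purely one of bookkeeping: quoting the right form of the DiBenedetto/Tolksdorf/Lieberman estimate with the correct dependence on $\mu_0,\mu_1,\mu_2,p,d$ and confirming that the Lipschitz condition $\eqref{a:4}$ indeed matches the hypotheses of those theorems.
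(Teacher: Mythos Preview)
Your approach matches the paper's: lift to $u(y)=\xi\cdot y+N(y,\xi)$, invoke De Giorgi--Moser local boundedness for the $L^\infty$ bound and DiBenedetto's interior $C^{1,\theta}$ regularity for the gradient, then use periodicity, the homogeneity \eqref{a:1} for normalization, and Lemma~\ref{lemma:2.1} to close. One minor slip: the De Giorgi--Moser bound should be stated for $u$ rather than for $N$ (the paper does exactly this in \eqref{f:2.21}), since the equation satisfied by $N$ alone carries the inhomogeneity $A(\cdot,\xi)$ and would produce an extra additive term of order $|\xi|$; the conclusion \eqref{pri:2.19} is of course unaffected.
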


\begin{proof}
The estimate $\eqref{pri:2.19}$ have be shown in \cite[Theorem 4]{C} for the case $p=2$.
In terms of $p\not=2$, by setting $u = N(y,\xi)+y\cdot\xi$, it follows from \cite[Theorem 3.9]{MZ} that
\begin{equation}\label{f:2.21}
  \|u\|_{L^\infty(B(0,r))} \lesssim \Big(\dashint_{B(0,2r)}|u|^p\Big)^{1/p}
\end{equation}
for any $0<r<(1/2)$. Obviously, this implies the stated estimate $\eqref{pri:2.19}$.
Then we turn to address the estimate $\eqref{pri:2.9}$.  Under the smoothness assumption
$\eqref{a:4}$, it is known from \cite[Theorems 1,2]{D} that
\begin{equation*}
\begin{aligned}
\|\nabla u\|_{L^\infty(B(0,r))} + [\nabla u]_{C^\infty(B(0,r))}
&\leq C\big(r,\mu_0,\mu_1,d,p,\|u\|_{L^\infty(B(0,2/3))}\big) \\
&\leq C\big(r,\mu_0,\mu_1,d,p,\|u\|_{L^p(B(0,1))}\big).
\end{aligned}
\end{equation*}
where $r\in(0,1/2)$, and we use the estimate $\eqref{f:2.21}$ in the second inequality. Thus,
it is fine to assume $\|u\|_{L^p(B(0,1))} = 1$, and there holds
\begin{equation*}
\|\nabla u\|_{L^\infty(B(0,1/4))} + [\nabla u]_{C^\infty(B(0,1/4))}
\leq C\big(\mu_0,\mu_1,d,p\big).
\end{equation*}
Then, let $\bar{u} = u/\|u\|_{L^p(B(0,1))}$, and it follows from the homogeneity assumption
$\eqref{a:1}$ that $\text{div}A(y,\nabla\bar{u})=0$ in $B(0,1)$. Therefore,
\begin{equation*}
  \|\nabla u\|_{L^\infty(B(0,1/4))} + [\nabla u]_{C^\infty(B(0,1/4))}
\lesssim \|u\|_{L^p(B(0,1))}.
\end{equation*}
This together with a covering argument leads to the stated result $\eqref{pri:2.9}$.
\end{proof}

\begin{remark}\label{remark:2.1}
\emph{In view of the estimate $\eqref{pri:2.1}$, one may conclude that $N(y,0) = 0$ for any $y\in\mathbb{R}^d$.}
\end{remark}

\begin{lemma}\label{lemma:2.5}
Suppose $A$ satisfies the assumptions $\eqref{a:1}$ and $\eqref{c:1}$.
Let $\widehat{A}$ be given in $\eqref{eq:1.1}$.
Then the effective operator $\mathcal{L}_0$
satisfies $\widehat{A}(0) = 0$, and
\begin{equation}\label{pri:2.3}
\left\{\begin{aligned}
&\big<\widehat{A}(\xi)-\widehat{A}(\xi^\prime),\xi-\xi^\prime\big>\geq C_0|\xi-\xi^\prime|^p,\\
& |\widehat{A}(\xi)-\widehat{A}(\xi^\prime)|\leq C_1|\xi-\xi^\prime|
\big(|\xi|+|\xi^\prime|\big)^{p-2}
\end{aligned}\right.
\end{equation}
for $2\leq p<\infty$, and
\begin{equation}\label{pri:2.18}
\left\{\begin{aligned}
&\big<\widehat{A}(\xi)-\widehat{A}(\xi^\prime),\xi-\xi^\prime\big>\geq C_0\big(|\xi|+|\xi^\prime|\big)^{p-2}|\xi-\xi^\prime|^2,\\
& |\widehat{A}(\xi)-\widehat{A}(\xi^\prime)|\leq C_2|\xi-\xi^\prime|^{\frac{p-1}{3-p}}
\big(|\xi|+|\xi^\prime|\big)^{\frac{(p-1)(2-p)}{3-p}}
\end{aligned}\right.
\end{equation}
for $1<p\leq 2$, in which $C_0,C_1,C_2$ depend on $\mu_0,\mu_1,p$ and $d$.
\end{lemma}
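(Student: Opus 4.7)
The zero-value $\widehat{A}(0)=0$ follows at once: Remark~\ref{remark:2.1} gives $N(y,0)\equiv 0$ so $P(y,0)\equiv 0$, and \eqref{a:1} with $t=0$ forces $A(y,0)\equiv 0$, hence $\widehat{A}(0)=\int_Y A(y,0)\,dy=0$. The heart of the remaining argument is the identity
\begin{equation*}
\langle \widehat{A}(\xi)-\widehat{A}(\xi'),\xi-\xi'\rangle \;=\; \int_Y \langle A(y,P(y,\xi))-A(y,P(y,\xi')),\, P(y,\xi)-P(y,\xi')\rangle\,dy,
\end{equation*}
which I would establish by testing the difference of the two cell equations \eqref{pde:1.2} against the admissible periodic function $N(\cdot,\xi)-N(\cdot,\xi')$, rewriting $\nabla_y N=P-\xi$ via \eqref{eq:2.2}, and invoking the definition \eqref{eq:1.1}. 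Combined with the lower bound in \eqref{c:1}, this yields
\begin{equation*}
\langle \widehat{A}(\xi)-\widehat{A}(\xi'),\xi-\xi'\rangle \;\geq\; \mu_0 \int_Y (|P|+|P'|)^{p-2}|P-P'|^2\,dy,
\end{equation*}
which is the common starting point for both coerciveness estimates.

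For $p\geq 2$, the elementary pointwise inequality $(|P|+|P'|)^{p-2}|P-P'|^2\geq c_p |P-P'|^p$, together with Jensen's inequality and the periodic identity $\int_Y(P-P')\,dy=\xi-\xi'$, directly produces the lower bound in \eqref{pri:2.3}. For $1<p\leq 2$ this algebraic bound fails, so I would instead interpolate by H\"older at conjugate exponents $2/p$ and $2/(2-p)$,
\begin{equation*}
\int_Y |P-P'|^p\,dy \;\leq\; \Bigl(\int_Y (|P|+|P'|)^{p-2}|P-P'|^2\,dy\Bigr)^{p/2} \Bigl(\int_Y (|P|+|P'|)^p\,dy\Bigr)^{(2-p)/2},
\end{equation*}
bound the second factor by $C(|\xi|+|\xi'|)^p$ via \eqref{pri:2.1}, estimate the left-hand side from below by $|\xi-\xi'|^p$ via Jensen, and rearrange to obtain the weighted lower bound in \eqref{pri:2.18}.

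For the growth bounds I would use the pointwise companion of \eqref{c:1}, namely $|A(y,\eta)-A(y,\eta')|\lesssim (|\eta|+|\eta'|)^{p-2}|\eta-\eta'|$. In the range $1<p\leq 2$ this simplifies via $|\eta-\eta'|\leq |\eta|+|\eta'|$ to $|A(y,\eta)-A(y,\eta')|\lesssim |\eta-\eta'|^{p-1}$, so H\"older gives $|\widehat{A}(\xi)-\widehat{A}(\xi')|\lesssim (\int_Y |P-P'|^p\,dy)^{(p-1)/p}$, and inserting \eqref{pri:2.2} for the same range produces exactly the exponents $(p-1)/(3-p)$ and $(p-1)(2-p)/(3-p)$ in \eqref{pri:2.18}. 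The case $p\geq 2$ is where I expect the main technical work: factorising $(|P|+|P'|)^{p-2}|P-P'|$ as $\bigl[(|P|+|P'|)^{p-2}|P-P'|^2\bigr]^{1/2}(|P|+|P'|)^{(p-2)/2}$, applying Cauchy--Schwarz, using Jensen to bound $\int_Y (|P|+|P'|)^{p-2}\,dy\lesssim (|\xi|+|\xi'|)^{p-2}$, and controlling $\int_Y(|P|+|P'|)^{p-2}|P-P'|^2\,dy$ by $\langle \widehat{A}(\xi)-\widehat{A}(\xi'),\xi-\xi'\rangle\leq |\widehat{A}(\xi)-\widehat{A}(\xi')|\,|\xi-\xi'|$ via the key identity, I would arrive at the self-improving inequality $|\widehat{A}(\xi)-\widehat{A}(\xi')|\lesssim \bigl(|\widehat{A}(\xi)-\widehat{A}(\xi')|\,|\xi-\xi'|\bigr)^{1/2}(|\xi|+|\xi'|)^{(p-2)/2}$, from which squaring and absorbing one power of $|\widehat{A}(\xi)-\widehat{A}(\xi')|^{1/2}$ to the left yields the Lipschitz-type growth in \eqref{pri:2.3}.
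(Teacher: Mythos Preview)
The paper provides no argument of its own for this lemma; it simply defers to \cite[Remark~1.3]{MD} and \cite[Proposition~2.7]{FM}. Your outline is exactly the standard proof one finds in those references: the variational identity
\[
\langle\widehat A(\xi)-\widehat A(\xi'),\xi-\xi'\rangle=\int_Y\langle A(y,P(y,\xi))-A(y,P(y,\xi')),P(y,\xi)-P(y,\xi')\rangle\,dy
\]
obtained by testing the difference of the cell problems against $N(\cdot,\xi)-N(\cdot,\xi')$, followed by Jensen/H\"older manipulations of $P(\cdot,\xi)$ together with \eqref{pri:2.1} and \eqref{pri:2.2}. Both coerciveness steps and both growth bounds go through as you describe; in particular the self-improving Cauchy--Schwarz trick you propose for the Lipschitz bound in the range $p\ge 2$ is precisely the argument in \cite{FM}.

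One point deserves an explicit flag. The ``pointwise companion of \eqref{c:1}'' that you invoke for the growth bounds, namely $|A(y,\eta)-A(y,\eta')|\lesssim(|\eta|+|\eta'|)^{p-2}|\eta-\eta'|$, is \emph{not} a formal consequence of the two-sided inner-product condition \eqref{c:1} by itself: adding a large antisymmetric linear piece to the $p$-Laplacian leaves \eqref{c:1} untouched but destroys any modulus control. That said, the paper itself uses exactly this modulus bound elsewhere, attributing it to ``the assumption \eqref{c:1}'' (see the proof of Lemma~\ref{lemma:2.3}), and it is a standing hypothesis in the cited sources \cite{MD,FM}; so within the paper's conventions your use of it is consistent. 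It would still improve your write-up to state this growth hypothesis explicitly rather than treat it as automatic.
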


\begin{proof}
The proof may be found in \cite[Remark 1.3]{MD} or \cite[Proposition 2.7]{FM}.
\end{proof}

\begin{remark}\label{remark:2.2}
\emph{In order to obtain some higher regularity
from homogenized equations, we have to impose the following conditions
\begin{equation}\label{c:4}
 \big<\widehat{A}(\xi)-\widehat{A}(\xi^\prime),\xi-\xi^\prime\big>
\geq \tilde{\mu}_0 |\xi-\xi^\prime|^2\big(|\xi|+|\xi^\prime|\big)^{p-2}
\end{equation}
in the case of $2<p<\infty$ (since there merely holds the first line of
$\eqref{pri:2.3}$ in general),  and
\begin{equation}\label{c:5}
 |\widehat{A}(\xi)-\widehat{A}(\xi^\prime)|
\leq \tilde{\mu}_1 |\xi-\xi^\prime|\big(|\xi|+|\xi^\prime|\big)^{p-2}
\end{equation}
for the case $1<p<2$, where $\xi,\xi^\prime\in\mathbb{R}^d\setminus\{0\}$. Obviously, the assumptions $\eqref{c:4}$ and $\eqref{c:5}$
implies the first line of $\eqref{pri:2.3}$ and the second one of $\eqref{pri:2.18}$, respectively.}
\end{remark}

\begin{lemma}\label{lemma:2.4}
Suppose that $A$ satisfies $\eqref{a:1}$ and $\eqref{c:1}$.
Let $\varphi\in C_0^1(\Sigma_{2\varepsilon};\mathbb{R}^d)$, and $0<h<\varepsilon\ll 1$.
Then, there exists $\Phi(\cdot,\varphi)\in L^r(Y)$ with $1<r\leq \infty$,  satisfying
\begin{equation}\label{pri:2.5}
\Big(\dashint_{Y}|\Phi(\cdot,\varphi)|^r\Big)^{\frac{1}{r}} \lesssim
\left\{\begin{aligned}
& |\varphi|^{\frac{p-2}{p}} &~~~\text{if}~~& 2\leq p<\infty;\\
& |\varphi|^{\frac{2-p}{3-p}} &~~~\text{if}~~& 1< p\leq 2,
\end{aligned}\right.
\end{equation}
such that
\begin{equation}\label{eq:2.3}
D_i^{h}(\varepsilon N(y,\varphi)) =
\int_0^1 \partial_i N(y+t(h/\varepsilon)e_i,\bar{\varphi}) dt
+\varepsilon h^{\alpha-1}\Phi(y,\varphi)\big|D_i^{h}\varphi\big|^\alpha,
\end{equation}
where $\bar{\varphi} = \varphi(\cdot+he_i)$, and $\alpha$ is defined in
$\eqref{notation:1}$. Moreover, let $h=\varepsilon^\tau$ with
$1<\tau<1/(1-\alpha)$ and $p\not=2$, we obtain
\begin{equation}\label{pri:2.8}
\int_\Omega |D_i^{h}(\varepsilon N(x/\varepsilon,\varphi))|^p dx
\lesssim \int_\Omega|\varphi|^p dx + \varepsilon^{p-p(1-\alpha)\tau}
\left\{\begin{aligned}
&\int_\Omega |\varphi|^{p-2}|D_i^h\varphi|^{2}dx &~\text{if}~& 2<p<\infty;\\
&\int_\Omega |\varphi|^{\frac{p(2-p)}{3-p}}|D_i^h\varphi|^{\frac{p}{3-p}}dx &~\text{if}~& 1<p<2,
\end{aligned}\right.
\end{equation}
in which the up to constant depends on $\mu_0,\mu_1,p,d$.
\end{lemma}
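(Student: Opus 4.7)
The plan is to decompose $D_i^h(\varepsilon N(x/\varepsilon, \varphi))$ by inserting the intermediate quantity $\varepsilon N(x/\varepsilon, \bar\varphi)/h$, which yields
\[
D_i^h(\varepsilon N(x/\varepsilon, \varphi)) = \underbrace{\frac{\varepsilon\bigl[N(x/\varepsilon + (h/\varepsilon)e_i, \bar\varphi) - N(x/\varepsilon, \bar\varphi)\bigr]}{h}}_{\text{space translation in } y} + \underbrace{\frac{\varepsilon\bigl[N(x/\varepsilon, \bar\varphi) - N(x/\varepsilon, \varphi)\bigr]}{h}}_{\text{parameter shift in } \xi}.
\]
For the first piece I would appeal to the $C^{1,\theta}$ regularity of $N(\cdot, \xi)$ supplied by Lemma~\ref{lemma:2.6} to apply the fundamental theorem of calculus in $y$; the substitution $s = t(h/\varepsilon)$ then turns the difference quotient into $\int_0^1 \partial_i N(y + t(h/\varepsilon)e_i, \bar\varphi)\,dt$, matching the first term in $\eqref{eq:2.3}$.

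For the second piece the natural definition is
\[
\Phi(y,\varphi) := \frac{N(y,\bar\varphi) - N(y,\varphi)}{|\bar\varphi - \varphi|^{\alpha}},
\]
set to zero where $\bar\varphi = \varphi$. Using $|\bar\varphi - \varphi| = h|D_i^h\varphi|$ rewrites this piece as $\varepsilon h^{\alpha-1}\Phi(y,\varphi)|D_i^h\varphi|^{\alpha}$, exactly the remainder in $\eqref{eq:2.3}$. The core analytic step is then to bound $\Phi$ in $L^r(Y)$. For $r=p$, both $N(\cdot,\bar\varphi)$ and $N(\cdot,\varphi)$ have zero mean on $Y$, so Poincar\'e's inequality reduces the estimate to $\|P(\cdot,\bar\varphi)-P(\cdot,\varphi)\|_{L^p(Y)} + |\bar\varphi - \varphi|$, and $\eqref{pri:2.2}$ of Lemma~\ref{lemma:2.1} delivers the bound $C|\bar\varphi - \varphi|^{\alpha}(|\bar\varphi|+|\varphi|)^{1-\alpha}$ with the exponent $\alpha$ of $\eqref{notation:1}$. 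Dividing by $|\bar\varphi - \varphi|^{\alpha}$ and using $(|\bar\varphi|+|\varphi|)\lesssim |\varphi|$ (since $h\ll 1$) gives the claim for $r=p$. The range $r<p$ follows at once from $|Y|=1$ and H\"older's inequality; the crucial range $r>p$ (in particular $r=\infty$, which is what we need below) is obtained by interpolating with the uniform bound $\|N(\cdot,\xi)\|_{L^\infty(Y)}\lesssim |\xi|$ of Lemma~\ref{lemma:2.6}.

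To obtain the integral estimate $\eqref{pri:2.8}$ I would take the $L^p(\Omega)$ norm of both sides of $\eqref{eq:2.3}$ and apply $|a+b|^p\lesssim |a|^p+|b|^p$. The first term contributes at most $\int_\Omega |\bar\varphi|^p\,dx = \int_\Omega|\varphi|^p\,dx$ by the uniform gradient bound $|\partial_i N(\cdot,\bar\varphi)|\lesssim |\bar\varphi|$ from Lemma~\ref{lemma:2.6} (the translation $x\mapsto x+he_i$ is admissible because $\mathrm{supp}\,\varphi\subset\Sigma_{2\varepsilon}$ and $h<\varepsilon$). For the second term I would invoke the $r=\infty$ bound on $\Phi$ to replace $|\Phi(x/\varepsilon,\varphi(x))|$ pointwise by $C|\varphi(x)|^{1-\alpha}$; substituting $h=\varepsilon^{\tau}$ converts $(\varepsilon h^{\alpha-1})^p$ into $\varepsilon^{p-p(1-\alpha)\tau}$, and the identities $p(1-\alpha)=p-2,\ p\alpha=2$ for $p>2$ (resp.\ $p(1-\alpha)=p(2-p)/(3-p),\ p\alpha=p/(3-p)$ for $p<2$) reproduce the exponents on $|\varphi|$ and $|D_i^h\varphi|$ in $\eqref{pri:2.8}$. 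The hypothesis $\tau<1/(1-\alpha)$ is exactly what keeps the power of $\varepsilon$ strictly positive, so the remainder $R_2$ remains subordinate to $R_1$ as $\varepsilon\to 0$.

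The hardest part will be the passage from the $L^p(Y)$ estimate of $\Phi$ to the pointwise $L^\infty(Y)$ estimate used in the final integration: Lemma~\ref{lemma:2.1} furnishes only an $L^p$-level Hölder estimate for $P(\cdot,\xi)$, and upgrading this to a pointwise bound while preserving the exponent $\alpha$ on $|\bar\varphi-\varphi|$ relies critically on interpolating against the uniform estimate of Lemma~\ref{lemma:2.6}. A subsidiary technicality is that $\bar\varphi$ depends on $x$ through $\varphi(x+he_i)$ as well as through $\varphi(x)$, so $\Phi$ is really a three-argument object; the fact $h\ll\varepsilon$ ensures $\bar\varphi\approx \varphi$ pointwise and lets us absorb $(|\bar\varphi|+|\varphi|)^{1-\alpha}$ into $|\varphi|^{1-\alpha}$ up to a harmless constant, which is what allows the desired pointwise bound to decouple in the $L^p(\Omega)$ integration.
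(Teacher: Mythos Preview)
Your decomposition, the definition of $\Phi$, and the derivation of $\eqref{eq:2.3}$ match the paper exactly, and your $L^p(Y)$ estimate for $\Phi$ via Poincar\'e's inequality and $\eqref{pri:2.2}$ is precisely the paper's argument for finite $r$.

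There is, however, a genuine gap in the passage to $r=\infty$. Interpolating the $L^p(Y)$ bound $\|N(\cdot,\xi)-N(\cdot,\xi')\|_{L^p} \lesssim |\xi-\xi'|^{\alpha} |\xi|^{1-\alpha}$ against the crude $L^\infty$ bound $\|N(\cdot,\xi)-N(\cdot,\xi')\|_{L^\infty} \lesssim |\xi|+|\xi'|$ yields only
\[
\|N(\cdot,\xi)-N(\cdot,\xi')\|_{L^r(Y)} \lesssim |\xi-\xi'|^{\alpha p/r}\,|\xi|^{1-\alpha p/r},
\]
so the H\"older exponent in $|\xi-\xi'|$ degenerates as $r\to\infty$ and never recovers $\alpha$. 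With denominator $|\xi-\xi'|^{\alpha}$, your $\Phi$ is therefore \emph{not} controlled in $L^\infty$ by this route. The paper obtains the $L^\infty$ estimate by a different mechanism: it passes to $u(y,\xi)=N(y,\xi)+y\cdot\xi$, shifts by a constant to make it nonnegative, and invokes the local boundedness estimate together with the weak Harnack inequality for solutions of $\operatorname{div} A(y,\nabla u)=0$ (cf.\ \cite[Corollary~3.10, Theorem~3.13]{MZ}) to bound the pointwise difference $|u(y,\xi)-u(y,\xi')|$ by the $L^1$ average of $|u(\cdot,\xi)-u(\cdot,\xi')|$. This step uses the equation itself, not merely interpolation of norms, and is the key idea missing from your sketch.

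A secondary point: Lemma~\ref{lemma:2.4} assumes only \eqref{a:1} and \eqref{c:1}, not the smoothness condition \eqref{a:4}. The pointwise gradient bound $|\partial_i N(\cdot,\bar\varphi)|\lesssim |\bar\varphi|$ you invoke for the first term of \eqref{pri:2.8} is the second part of Lemma~\ref{lemma:2.6}, which does require \eqref{a:4}. Under the stated hypotheses only the $L^p$ bound $\eqref{pri:2.1}$ is available, and the paper handles the first term by partitioning $\operatorname{supp}\varphi$ into $\varepsilon$-cells $Y_\varepsilon^k$, applying $\eqref{pri:2.1}$ cell-by-cell (using periodicity and a change of variables), and summing. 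Your shortcut would work if one adds \eqref{a:4}, but it does not prove the lemma as stated.
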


\begin{proof}
According to the definition of the difference quotients, we have
\begin{equation*}
\begin{aligned}
D_i^{h}(\varepsilon N(y,\varphi))
&= \frac{\varepsilon}{h}\Big[N(y+(h/\varepsilon)e_i,\bar{\varphi}) - N(y,\bar{\varphi})\Big]
+ \frac{\varepsilon N(y,\varphi(x+he_i)) - \varepsilon N(y,\varphi(x))}{h} \\
&= \frac{\varepsilon}{h}\int_{0}^{1}\frac{d}{dt} \Big(N(y+t(h/\varepsilon)e_i,\bar{\varphi})\Big)
+\varepsilon h^{\alpha-1}\Phi(y,\varphi)|D_i^{h}\varphi|^\alpha ,
\end{aligned}
\end{equation*}
where we note that $\bar{\varphi}(x) = \varphi(x+he_i)$ and we omit the bar
since the scale of $h$ is even smaller than $\varepsilon$, and it will not essentially
change the location of $\varphi$ even measured in $\varepsilon$ scales.
By letting
\begin{equation*}
  \Phi(y,\varphi)
  = \frac{N(y,\varphi(x+he_i)) - N(y,\varphi(x))}{
|\varphi(x+he_i)-\varphi(x)|^\alpha},
\end{equation*}
it is not hard to observe the equality $\eqref{eq:2.3}$. Then we proceed to verify
the estimate $\eqref{pri:2.5}$. To do so, we denoted by $\xi = \varphi(x)$ and $\xi^\prime = \varphi(x+h e_i)$, and it follows from Poincar\'e's inequality that
\begin{equation*}
\begin{aligned}
\Big(\dashint_{Y}|N(\cdot,\xi)-N(\cdot,\xi^\prime)|^p\Big)^{\frac{1}{p}}
&\leq \Big(\dashint_{Y}|\nabla N(\cdot,\xi)-\nabla N(\cdot,\xi^\prime)|^p\Big)^{\frac{1}{p}}\\
&\lesssim \Big(\dashint_{Y}| P(\cdot,\xi)- P(\cdot,\xi^\prime)|^p\Big)^{\frac{1}{p}} + |\xi-\xi^\prime|
\end{aligned}
\end{equation*}
This together with the estimate $\eqref{pri:2.2}$ leads to
\begin{equation*}
\Big(\dashint_{Y}|N(\cdot,\xi)-N(\cdot,\xi^\prime)|^p\Big)^{\frac{1}{p}}
\lesssim \left\{\begin{aligned}
&|\xi-\xi^\prime|^{\frac{2}{p}}|\xi|^{\frac{p-2}{p}},
&~\text{if}~& 2\leq p <\infty;\\
&|\xi-\xi^\prime|^{\frac{1}{3-p}}|\xi|^{\frac{2-p}{3-p}}, &~\text{if}~& 1<p\leq 2,
\end{aligned}\right.
\end{equation*}
where we use the fact that $|\xi^\prime|\leq 2|\xi|$ whenever $h$ is sufficient small. This
proved the case $1<r<\infty$ of $\eqref{pri:2.5}$. In the following, we proceed to show the case
$r=\infty$. Let $u(y,\xi) = N(y,\xi)+y\cdot\xi$ and
$\tilde{u}(y,\xi) = u(y,\xi)+\tilde{M}$, in which $\tilde{M}$ is such that
$\tilde{u}$ is positive in $B(0,1)$. According to the estimate $\eqref{pri:2.19}$, it is not
hard to see the existence of $\tilde{M}$ whenever $|\xi|$ is bounded, and such the boundedness
is in fact given by $\|\varphi\|_{L^\infty(\Omega)}$. Note that $\tilde{u}$ still satisfies
the same type equation
\begin{equation*}
 \text{div} A(y,\nabla u(y,\xi)) = 0 \quad \text{in}\quad B(0,1).
\end{equation*}
Thus, it follows from the local boundedness estimate and the weak Harnack inequality
(see for example \cite[Corollary 3.10,Theorem 3.13]{MZ}) that
\begin{equation}\label{f:3.18}
 \sup_{y\in B(0,2/3)}\tilde{u}(y,\xi) \lesssim \dashint_{B(0,4/5)} \tilde{u}(\cdot,\xi)
 \qquad\text{and}\qquad
 \inf_{y\in B(0,2/3)}\tilde{u}(y,\xi)
 \gtrsim \dashint_{B(0,4/5)}\tilde{u}(\cdot,\xi),
\end{equation}
respectively. Then for any $y\in Y$ such that $\tilde{u}(y,\xi)-\tilde{u}(y,\xi^\prime)>0$,
there holds
\begin{equation*}
\begin{aligned}
\tilde{u}(y,\xi) - \tilde{u}(y,\xi^\prime)
\leq \sup_{y\in Y}\tilde{u}(y,\xi)-\inf_{y\in Y}\tilde{u}(y,\xi^\prime)
\lesssim \dashint_{B(0,4/5)} \big|\tilde{u}(\cdot,\xi)-\tilde{u}(\cdot,\xi^\prime)\big|,
\end{aligned}
\end{equation*}
where we use the estimates $\eqref{f:3.18}$ in the second inequality. Similarly, for any
$y\in Y$ such that $\tilde{u}(y,\xi^\prime)- \tilde{u}(y,\xi) >0$,
one may derive
\begin{equation*}
\begin{aligned}
\tilde{u}(y,\xi^\prime) - \tilde{u}(y,\xi)
\leq \sup_{y\in Y}\tilde{u}(y,\xi^\prime)-\inf_{y\in Y}\tilde{u}(y,\xi)
\lesssim \dashint_{B(0,4/5)} \big|\tilde{u}(\cdot,\xi)-\tilde{u}(\cdot,\xi^\prime)\big|.
\end{aligned}
\end{equation*}
Then we actually obtain
\begin{equation*}
\begin{aligned}
|u(y,\xi) - u(y,\xi^\prime)|
&\lesssim\dashint_{B(0,4/5)} \big|u(\cdot,\xi)-u(\cdot,\xi^\prime)\big| \\
&\lesssim \dashint_{Y}|N(\cdot,\xi)-N(\cdot,\xi^\prime)| + |\xi-\xi^\prime|
\end{aligned}
\end{equation*}
for any $y\in Y$, where we use a covering argument in the second inequality. This further implies
\begin{equation*}
\|N(\cdot,\xi)-N(\cdot,\xi^\prime)\|_{L^\infty(Y)}
\lesssim \left\{\begin{aligned}
&|\xi-\xi^\prime|^{\frac{2}{p}}|\xi|^{\frac{p-2}{p}},
&~\text{if}~& 2\leq p <\infty;\\
&|\xi-\xi^\prime|^{\frac{1}{3-p}}|\xi|^{\frac{2-p}{3-p}}, &~\text{if}~& 1<p\leq 2,
\end{aligned}\right.
\end{equation*}
whenever $h$ is sufficiently small, which will show $\Phi(\cdot,\varphi)\in L^\infty(Y)$ with
\begin{equation}\label{f:2.19}
 \|\Phi(\cdot,\varphi(x))\|_{L^\infty(Y)}
 \lesssim \left\{\begin{aligned}
&|\varphi(x)|^{\frac{p-2}{p}},
&~\text{if}~& 2\leq p <\infty;\\
&|\varphi(x)|^{\frac{2-p}{3-p}}, &~\text{if}~& 1<p\leq 2,
\end{aligned}\right.
\end{equation}
for any $x\in\Omega$.

Recall $Y_{\varepsilon}^i = \varepsilon(i + Y)$ for $i\in \mathbb{Z}^d$, and we collect a family of $\{Y_\varepsilon^i\}$, whose index set is denoted by $I_\varepsilon$, such that
$\text{supp}\varphi\subset \cup_{i\in I_\varepsilon}Y_\varepsilon^i\subset\Omega$ and
$Y_\varepsilon^i\cap Y_\varepsilon^j = \emptyset$ if $i\not=j$. Thus the left-hand side of
$\eqref{pri:2.8}$ equals
\begin{equation*}
\begin{aligned}
 \sum_{k\in I_\varepsilon}\int_{Y_\varepsilon^k}
 \big|D_i^h(\varepsilon N(x/\varepsilon,\varphi))\big|^pdx
& \leq \sum_{k\in I_\varepsilon}\int_{Y_\varepsilon^k}
\Big|\int_0^1 \nabla_{y_i} N(y+t(h/\varepsilon)e_i,\varphi) dt\Big|^p dx\\
&+\varepsilon^{p-p(1-\alpha)\tau} \int_{\Omega}
\big|\Phi(y,\varphi)|D_i^{h}\varphi|^\alpha\big|^p dx.
\end{aligned}
\end{equation*}
The first term in the right-hand side above is controlled by
\begin{equation*}
\begin{aligned}
\sum_{k\in I_\varepsilon}\int_{Y_\varepsilon^k}\int_0^1 \big|\partial_i N(y+t(h/\varepsilon)e_i,\varphi)\big|^p dt dx
&= \sum_{k\in I_\varepsilon}\int_0^1 \int_{Y_\varepsilon^k}\big|\nabla_{y_i} N(y+t(h/\varepsilon)e_i,\varphi)\big|^p  dxdt \\
&\lesssim \sum_{k\in I_\varepsilon} |Y_\varepsilon^k|\int_0^1 \dashint_{Y_\varepsilon^k}
\big|\nabla_y N(y+t(h/\varepsilon)e_i,\varphi)\big|^p  dxdt \\
&\lesssim \sum_{k\in I_\varepsilon} |Y_\varepsilon^k|\int_0^1
\dashint_{Y+k+t(h/\varepsilon)e_i}\big|\nabla_z N(z,\tilde{\varphi})\big|^p  dz dt\\
&\lesssim \sum_{k\in I_\varepsilon} |Y_\varepsilon^k|
\dashint_{Y}\big|\nabla_z N(z,\tilde{\tilde{\varphi}})\big|^p  dz,
\end{aligned}
\end{equation*}
where $\tilde{\varphi}(z) = \varphi(\varepsilon z - the_i)$ with
$z\in (k+ the_i) + Y$, and
$\tilde{\tilde{\varphi}}(z) = \varphi(\varepsilon z
- the_i - k - t(h/\varepsilon)e_i)$ with  $z\in Y$.
Let $\hat{x} = \varepsilon z
- the_i - k - t(h/\varepsilon)e_i$, and one may conclude that $\hat{x}\in Y_{2\varepsilon}^k$.
Note that the periodicity of the corrector is employed in the last inequality. Therefore, it
follows from the estimate $\eqref{pri:2.1}$ that
\begin{equation*}
\sum_{k\in I_\varepsilon} |Y_\varepsilon^k|
\dashint_{Y}\big|\nabla N(z,\tilde{\tilde{\varphi}})\big|^p  dz
\lesssim \sum_{k\in I_\varepsilon} |Y_\varepsilon^k| \inf_{\hat{x}\in Y_{2\varepsilon}^k}
|\varphi(\hat{x})|^p
\lesssim \sum_{k\in I_\varepsilon} \int_{Y_\varepsilon^k}|\varphi|^p dx
\lesssim \int_{\Omega}|\varphi|^p dx.
\end{equation*}
Here we use Chebyshev's inequality in the second inequality. Thus, we arrive at
\begin{equation}\label{f:2.3}
\sum_{k\in I_\varepsilon}
\int_{Y_\varepsilon^k}\int_0^1 \big|\nabla_{y_i} N(y+t(h/\varepsilon)e_i,\varphi)\big|^p dt dx
\lesssim \int_{\Omega}|\varphi|^p dx.
\end{equation}

In terms of the estimate $\eqref{f:2.3}$ and the periodicity of $\Phi$ with respective to
the first variable, it is not hard to see that
\begin{equation*}
  |\Phi(x/\varepsilon,\varphi(x))|\lesssim |\varphi(x)|^\nu
  \qquad\forall x\in\Omega,
\end{equation*}
where $\nu = (p-2)/p$ if $2<p<\infty$, and
$\nu= (2-p)/(p-3)$ if $1<p<2$. Thus, we have
\begin{equation}\label{f:2.4}
\begin{aligned}
\int_{\Omega}
\big|\Phi(y,\varphi)|D_i^{h}\varphi|^\alpha\big|^p dx
\lesssim \int_{\Omega}|\varphi|^{\nu p}|(D_i^h\varphi)|^{\alpha p} dx.
\end{aligned}
\end{equation}

Finally, the estimates $\eqref{f:2.3}$ and
$\eqref{f:2.4}$ imply the desired estimate $\eqref{pri:2.8}$, and we have completed the proof.
\end{proof}

\begin{lemma}\label{lemma:2.9}
Let $\varphi\in C_0^1(\Sigma_{2\varepsilon};\mathbb{R}^d)$.
Let $\theta\in(0,1)$ be given in Lemma $\ref{lemma:2.6}$, and
assume the same conditions and notation as in Lemma $\ref{lemma:2.4}$.
Then, under the additional assumption $\eqref{a:4}$, there holds
the following estimate
\begin{equation}\label{pri:2.15}
\begin{aligned}
&\int_\Omega |D_i^{h}(\varepsilon N(x/\varepsilon,\varphi))
-\partial_i N(x/\varepsilon,\varphi)|^p dx\\
&\lesssim \varepsilon^{p\theta(\tau-1)}\int_{\Omega}|\varphi|^p dx
+ \varepsilon^{p-p(1-\alpha)\tau}
\left\{\begin{aligned}
&\int_\Omega |\varphi|^{p-2}|D_i^h\varphi|^{2}dx &~\text{if}~& 2<p<\infty;\\
&\int_\Omega |\varphi|^{\frac{p(2-p)}{3-p}}|D_i^h\varphi|^{\frac{p}{3-p}}dx &~\text{if}~& 1<p<2,
\end{aligned}\right.
\end{aligned}
\end{equation}
where the up to constant depends on $\mu_0,\mu_1,\mu_2,p,d$.
\end{lemma}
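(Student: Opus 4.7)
The natural starting point is the identity $\eqref{eq:2.3}$ of Lemma $\ref{lemma:2.4}$. Subtracting $\partial_i N(y,\varphi)$ from both sides and inserting $\pm\partial_i N(y,\bar\varphi)$ inside the first integrand, the quantity to be estimated splits as $\mathrm{I}+\mathrm{II}+\mathrm{III}$, where
\begin{align*}
\mathrm{I} &= \int_0^1\bigl[\partial_i N(y+t(h/\varepsilon)e_i,\bar\varphi)-\partial_i N(y,\bar\varphi)\bigr]\,dt,\\
\mathrm{II} &= \partial_i N(y,\bar\varphi)-\partial_i N(y,\varphi),\\
\mathrm{III} &= \varepsilon h^{\alpha-1}\Phi(y,\varphi)|D_i^h\varphi|^\alpha.
\end{align*}
The $L^p$ bound for $\mathrm{III}$ is already carried out inside the proof of Lemma $\ref{lemma:2.4}$ and supplies exactly the second summand on the right-hand side of $\eqref{pri:2.15}$, so only $\mathrm{I}$ and $\mathrm{II}$ are new.

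For $\mathrm{I}$ I would freeze the second argument at $\bar\varphi(x)$ and invoke the H\"older seminorm bound $[\partial_i N(\cdot,\xi)]_{C^{0,\theta}(Y)}\lesssim|\xi|$ from $\eqref{pri:2.9}$; since the first-variable shift has size at most $h/\varepsilon=\varepsilon^{\tau-1}$, this yields the pointwise estimate $|\mathrm{I}|\lesssim\varepsilon^{\theta(\tau-1)}|\bar\varphi|$. Raising to the $p$-th power, integrating in $x$ and using $h\ll\varepsilon$ to pass from $\bar\varphi$ to $\varphi$ via a translation change of variable then produces the first summand on the right of $\eqref{pri:2.15}$.

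Term $\mathrm{II}$ is the delicate one. The plan is to imitate the cell-by-cell integration at the end of the proof of Lemma $\ref{lemma:2.4}$, but with the $L^p$-averaged H\"older-in-$\xi$ estimate $\eqref{pri:2.2}$ for $P(\cdot,\xi)=\xi+\nabla_y N(\cdot,\xi)$ playing the role of the $L^\infty$-in-$y$ bound for $\Phi$. After freezing $\varphi$ at a cell-wise representative value on each $Y_\varepsilon^k$, the rescaling $z=x/\varepsilon$ together with periodicity and $\eqref{pri:2.2}$ yields, upon summation, a bound of order $h^{p/\alpha_\ast}$ times the same weighted integral that appears in the second summand of $\eqref{pri:2.15}$, with $\alpha_\ast=p/2$ for $p>2$ and $\alpha_\ast=3-p$ for $1<p<2$. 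Here the vector identity $\bar\varphi-\varphi=h\,D_i^h\varphi$ together with $|\bar\varphi|\simeq|\varphi|$ in small scales supplies the correct integrand. A direct exponent check using $\tau>1$ gives $h^{p/\alpha_\ast}\leq\varepsilon^{p-p(1-\alpha)\tau}$ in both regimes, so $\mathrm{II}$ is absorbed into the second summand of $\eqref{pri:2.15}$.

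The main obstacle is precisely the cell-freezing in $\mathrm{II}$: the estimate $\eqref{pri:2.2}$ requires the arguments $\xi,\xi'$ to be constants, whereas in our application they are $\varphi(x),\bar\varphi(x)$ with $x$ varying over $Y_\varepsilon^k$. The residual freezing error is of order $\varepsilon\|\nabla\varphi\|_{L^\infty}$ per cell and is routinely absorbable, but the bookkeeping must be carried out carefully so that the exponent matching described above is preserved; the smoothness assumption $\eqref{a:4}$, which enters through $\eqref{pri:2.9}$, is essential precisely at this step.
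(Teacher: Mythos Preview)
Your proposal is correct and follows essentially the same route as the paper: the decomposition $\mathrm{I}+\mathrm{II}+\mathrm{III}$ coincides with the paper's split (their $I_1$, $I_2$, and the term already handled in $\eqref{pri:2.8}$), your treatment of $\mathrm{I}$ via $\eqref{pri:2.9}$ and of $\mathrm{II}$ via cell-wise use of $\eqref{pri:2.2}$ matches theirs, and the exponent comparison $p\alpha\tau\ge p-p(1-\alpha)\tau$ for $\tau>1$ that lets $\mathrm{II}$ be absorbed is exactly the paper's observation. The ``main obstacle'' you flag is handled in the paper simply by evaluating at a representative point $\hat x\in Y_\varepsilon^k$ (mean-value theorem for $C^1$ functions) and passing to the Riemann integral, so no separate freezing-error bookkeeping is needed.
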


\begin{proof}
According to the expression $\eqref{eq:2.3}$ and the estimate $\eqref{pri:2.8}$, it
suffices to estimate the following quantity
\begin{equation}
\begin{aligned}
&\quad\int_{\Omega}\int_0^1
\big|\partial_i N(y
+t(h/\varepsilon)e_j,\bar{\varphi})
- \partial_i N(y,\varphi)\big|^p dt dx \\
&\lesssim \underbrace{\int_{\Omega}\int_0^1
\big|\partial_i N(y
+t(h/\varepsilon)e_j,\bar{\varphi})
- \partial_i N(y,\bar{\varphi})\big|^p dt dx }_{I_1}
+ \underbrace{\int_{\Omega}
\big|\partial_i N(y,\bar{\varphi}) - \partial_i N(y,\varphi)\big|^p dx}_{I_2}.
\end{aligned}
\end{equation}
Then, it follows from the estimate $\eqref{pri:2.9}$ that
\begin{equation}\label{f:2.20}
\begin{aligned}
I_1
\leq (h/\varepsilon)^{p\theta}
\int_{\Omega}\big[\nabla N(\cdot,\bar{\varphi})\big]_{C^{0,\theta}(\mathbb{R}^d)}^p dx
\int_0^1 t^{p\theta} dt
\lesssim \varepsilon^{p\theta(\tau-1)}\int_{\Omega}|\varphi|^pdx,
\end{aligned}
\end{equation}
where we note that the compact support of $\bar{\varphi}$ is still included in $\Omega$ since
$0<h<\varepsilon$. Moreover, we claim that $I_1$ will be the main term, since we have the following
computations. In view of $\eqref{pri:2.2}$,
\begin{equation}\label{f:2.22}
\begin{aligned}
I_2 &\leq \sum_{k\in I_\varepsilon}|Y_{\varepsilon}^k|
\dashint_{Y_\varepsilon^k}|\nabla_{y} N(y,\bar{\varphi}) - \nabla_{y} N(y,\varphi)|^pdx \\
&\lesssim h^{p\alpha}
\sum_{k\in I_\varepsilon}|Y_{\varepsilon}^k||\nabla \varphi(\hat{x})|^{p\alpha}
|\varphi(\hat{x})|^{p\nu}
\lesssim \varepsilon^{p\tau\alpha}
\left\{\begin{aligned}
& \int_{\Omega}|\nabla \varphi|^{2}|\varphi|^{p-2}dx,
&~\text{if}~& 2\leq p <\infty;\\
&\int_{\Omega}|\nabla\varphi|^{\frac{p}{3-p}}|\varphi|^{\frac{p(2-p)}{3-p}}dx, &~\text{if}~& 1<p\leq 2,
\end{aligned}\right.
\end{aligned}
\end{equation}
where the index $\nu$ is defined in $\eqref{f:2.4}$,
and $\hat{x}\in Y_{\varepsilon}^k$ is due to mean value theorem, and the last step follows from
the definition of Riemann integral
for $\varphi\in C_0^1(\Omega)$.
As in the previous lemma,
we set $h=\varepsilon^{\tau}$ with $1<\tau<1/(1-\alpha)$, and it is not hard to find that
the term $I_2$ will be eaten by the second term in the right-hand side of $\eqref{pri:2.8}$ in terms
of the order of $\varepsilon$. That is the reason why we say $I_1$ is main term here,
and the proof is complete.
\end{proof}

\begin{remark}
\emph{In terms of $p=2$,
on account of Lemma $\ref{lemma:2.4}$ and the above calculations,  there holds
\begin{equation*}
\begin{aligned}
\int_\Omega |D_i^{h}(\varepsilon N(x/\varepsilon,\varphi))
-\partial_i N(x/\varepsilon,\varphi)|^2 dx
\lesssim  \varepsilon^{2}
\int_\Omega |\nabla_i\varphi|^{2}dx
\end{aligned}
\end{equation*}
for any $0<h<\varepsilon$. If we further assume
$\varphi = \psi_{4\varepsilon}\nabla u_0$, then
layer and co-layer type estimates (see Lemma $\ref{lemma:3.7}$) leads to
\begin{equation}\label{pri:2.20}
\begin{aligned}
\Big(\int_\Omega |D_i^{h}(\varepsilon N(x/\varepsilon,\varphi))
-\partial_i N(x/\varepsilon,\varphi)|^2 dx\Big)^{1/2}
\lesssim  \varepsilon^{\beta}
\Big(\int_\Omega |\nabla u_0|^{q}dx\Big)^{1/q}
\end{aligned}
\end{equation}
where $q=2(1+\delta)$ and $\delta\in (0,1)$ is the so-called Meyer's index.}
\end{remark}

\subsection{Flux corrector and its properties}
\begin{lemma}[Flux correctors]\label{lemma:2.3}
Suppose $A$ satisfies $\eqref{a:1}$ and $\eqref{c:1}$.
Let $b(y,\xi) = A(y,\xi+\nabla N(y,\xi)) - \widehat{A}(\xi)$, where $y\in Y$
and $\xi\in \mathbb{R}^d$.
Then we have two properties: \emph{(i)} $\dashint_Y b(\cdot,\xi) = 0$;
\emph{(ii)} $\emph{div} b(\cdot,\xi) = 0$ in $Y$. Moreover, there
exists $\hat{p}\in(1,2)$, depending only on $p,d$, such that the so-called
flux corrector $E_{ji}(\cdot,\xi)\in W^{1,\hat{p}}_{per}(Y)\cap L^2_{per}(Y)$, and
\begin{equation}\label{eq:2.4}
 b_i(y,\xi) = \frac{\partial}{\partial y_j}\big\{E_{ji}(y,\xi)\big\}
 \qquad \text{and} \quad E_{ji} = -E_{ij},
\end{equation}
and  there holds
\begin{equation}\label{pri:2.10}
\Big(\dashint_Y |E_{ji}(\cdot,\xi)-E_{ji}(\cdot,\xi^\prime)|^2\Big)^{\frac{1}{2}}
\lesssim\left\{\begin{aligned}
&|\xi-\xi^\prime|^{\frac{2}{p}}\big(|\xi|+|\xi^\prime|\big)^{\frac{(p-2)(1+p)}{p}}
&~~~\text{if}~~& 2< p<\infty;\\
&|\xi-\xi^\prime|^{\frac{p-1}{3-p}}
\big(|\xi|+|\xi^\prime|\big)^{\frac{(2-p)(p-1)}{3-p}} &~~~\text{if}~~& 1< p\leq2.
\end{aligned}\right.
\end{equation}
Moreover, if we additionally assume the smoothness condition $\eqref{a:4}$, then we have
\begin{equation}\label{pri:2.14}
\|\nabla E_{ji}(\cdot,\xi)\|_{C^{0,\rho}(Y)}
\leq C |\xi|^{p-1},
\end{equation}
where $\rho = \min\{\theta,(p-1)\theta\}$, and $C$ depends only on $\mu_0,\mu_2,\mu_3,p$ and $d$.
\end{lemma}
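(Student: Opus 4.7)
My plan has four parts corresponding to the four assertions of the lemma. First, for (i) and (ii): the identity $\dashint_Y b(\cdot,\xi)=0$ is just the definition \eqref{eq:1.1} of $\widehat{A}$, while $\text{div}_y b(\cdot,\xi)=0$ follows from the cell problem \eqref{pde:1.2} (since $\widehat{A}(\xi)$ is a constant in $y$). To construct the flux corrector I would then, for each index $i\in\{1,\dots,d\}$, solve the periodic Poisson equation
\begin{equation*}
\Delta_y f_i(\cdot,\xi)=b_i(\cdot,\xi)\quad\text{in }Y,\qquad \dashint_Y f_i(\cdot,\xi)=0,
\end{equation*}
whose solvability is ensured by (i). Setting $E_{ji}(y,\xi):=\partial_{y_j}f_i-\partial_{y_i}f_j$ produces antisymmetry automatically, and a short computation gives $\partial_{y_j}E_{ji}=\Delta f_i-\partial_{y_i}(\text{div}_y f)$. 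Property (ii) forces $\Delta_y(\text{div}_y f)=\text{div}_y b=0$; combined with periodicity and the mean-zero normalization, this yields $\text{div}_y f\equiv 0$, completing the identity $\partial_{y_j}E_{ji}=b_i$.

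For the integrability $E_{ji}(\cdot,\xi)\in W^{1,\hat p}_{per}(Y)\cap L^2_{per}(Y)$, I would use the growth bound $|A(y,\xi)|\le \mu_1|\xi|^{p-1}$ (which follows from \eqref{c:1} and $A(y,0)=0$, the latter being a consequence of homogeneity \eqref{a:1}) together with \eqref{pri:2.1} to deduce $b(\cdot,\xi)\in L^{p/(p-1)}(Y)$. The Calder\'on--Zygmund theory for the periodic Poisson operator then yields $\nabla^2 f(\cdot,\xi)\in L^{p/(p-1)}(Y)$, so $\nabla E_{ji}(\cdot,\xi)$ belongs to $L^{\hat p}(Y)$ for some $\hat p\in(1,2)$ (possibly with a small Meyers-type improvement when $p$ is large). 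The $L^2$-bound on $E$ itself is then obtained from a Sobolev embedding applied to $f$, whose $W^{2,\hat p}$-norm is under control; when $1<p\le 2$ one has $p/(p-1)\ge 2$ and the $L^2$-bound is immediate.

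The $\xi$-continuity estimate \eqref{pri:2.10} carries the bulk of the analysis. Observing that $f_i(\cdot,\xi)-f_i(\cdot,\xi')$ solves the Poisson equation with right-hand side $b_i(\cdot,\xi)-b_i(\cdot,\xi')$, and that $|E_{ji}(y,\xi)-E_{ji}(y,\xi')|\le 2|\nabla_y(f(y,\xi)-f(y,\xi'))|$, I would reduce to bounding the latter gradient in $L^2(Y)$ via a duality-energy argument; this in turn reduces the problem to estimating $b(\cdot,\xi)-b(\cdot,\xi')$ in the negative Sobolev norm $H^{-1}(Y)$. The core computation is to estimate the nonlinearity $A(\cdot,P(\cdot,\xi))-A(\cdot,P(\cdot,\xi'))$: for $p\ge 2$, the growth bound in \eqref{c:1} gives $|A(y,P)-A(y,P')|\lesssim (|P|+|P'|)^{p-2}|P-P'|$, and a H\"older split using the $L^p$-control of $P$ from \eqref{pri:2.1} together with the $L^p$-modulus \eqref{pri:2.2} for $P(\cdot,\xi)-P(\cdot,\xi')$ produces precisely the exponent pattern $|\xi-\xi'|^{2/p}(|\xi|+|\xi'|)^{(p-2)(1+p)/p}$; the case $1<p\le 2$ uses instead the second line of \eqref{pri:2.2} together with the H\"older bound on $\widehat{A}$ from Lemma \ref{lemma:2.5}. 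I expect the careful tracking of these exponents, in the absence of an $L^\infty$-bound on $P$ at this stage, to be the main technical obstacle.

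Finally, under the additional smoothness \eqref{a:4}, I would derive \eqref{pri:2.14} via Schauder theory. The estimate \eqref{pri:2.9} now gives $P(\cdot,\xi)\in C^{0,\theta}(Y)$ with $\|P\|_{L^\infty}+[P]_{C^{0,\theta}}\lesssim|\xi|$, so combining the Lipschitz-in-$y$ condition \eqref{a:4} with the modulus of continuity of $A(y,\cdot)$ (Lipschitz for $p\ge 2$, only $(p-1)$-H\"older for $1<p<2$) yields $[A(\cdot,P(\cdot,\xi))]_{C^{0,\rho}(Y)}\lesssim|\xi|^{p-1}$ with $\rho=\min\{\theta,(p-1)\theta\}$. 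Classical Schauder estimates applied to $\Delta f_i=b_i$ then give $[\nabla^2 f_i]_{C^{0,\rho}(Y)}\lesssim|\xi|^{p-1}$, and since $\nabla E_{ji}$ is a linear combination of entries of $\nabla^2 f$, the bound \eqref{pri:2.14} follows.
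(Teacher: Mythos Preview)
Your proposal follows the paper's architecture very closely: the construction of $E_{ji}$ via periodic Poisson potentials $f_i$, the antisymmetry, the reduction of \eqref{pri:2.10} to the modulus of $b$ in a Lebesgue/negative-Sobolev scale, and the Schauder route to \eqref{pri:2.14} are all exactly what the paper does. Two points deserve comment.

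First, the technical obstacle you flag for the $p>2$ branch of \eqref{pri:2.10} is real, and the paper does \emph{not} overcome it using only the $L^p$-control of $P$ from \eqref{pri:2.1}. Instead, after writing
\[
I\lesssim\Big(\dashint_Y|P(\cdot,\xi)-P(\cdot,\xi')|^{q}\big(|P(\cdot,\xi)|+|P(\cdot,\xi')|\big)^{q(p-2)}\Big)^{1/q},
\]
it pulls the weight $(|P|+|P'|)^{p-2}$ out in $L^\infty$ via \eqref{pri:2.9}, and bounds the remaining factor in $L^p$ by \eqref{pri:2.2}. Note that \eqref{pri:2.9} relies on the smoothness condition \eqref{a:4}; so although the lemma is stated under only \eqref{a:1} and \eqref{c:1}, the paper's proof for the $p>2$ line of \eqref{pri:2.10} tacitly uses \eqref{a:4} as well. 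Your instinct to work without $L^\infty$ control of $P$ is therefore more ambitious than what the paper actually does; a purely $L^p$ H\"older split closes only when $q(p-1)\le p$, which fails for $p>2d/(d-2)$ in dimension $d\ge 3$.

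Second, your integrability bookkeeping is slightly rough. The paper does not rely merely on $b\in L^{p/(p-1)}$; for $d\ge 3$ and $p>d/(d-1)$ it invokes Meyer's estimate \eqref{pri:4.3} (at $\varepsilon=1$) to secure $b\in L^{(1+\delta)p/(p-1)}$, which is needed both to solve the Poisson equation in $H^1_{per}(Y)$ (this is what yields $E_{ji}\in L^2$) and for the subsequent Calder\'on--Zygmund localization giving $\nabla E\in L^{\hat p}$. Your phrase ``possibly with a small Meyers-type improvement'' is on target; the paper makes it precise by defining the index $\hat q$ case by case in $p$ and $d$.
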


\begin{proof}
The proof is quite similar to the linear case (see for example \cite{S4,ZVVPSE1}).
It is clear to see that (i) and (ii) follow from the formula $\eqref{eq:1.1}$
and the equation $\eqref{pde:1.2}$, respectively. To obtain more results, we first need to
verify that $b_i(\cdot,\xi)\in L^{\hat{q}}(Y)$
with the index $\hat{q}$ being as follows:
\begin{equation}
\hat{q} =^{(d=1,2)} p/(p-1),\qquad
\hat{q}=^{(d\geq 3)}\left\{\begin{aligned}
& 2d/(d+2) &~\text{if}~& 1<p\leq 2d/(2d-2);\\
& (1+\delta)p/(p-1) &~\text{if}~& 2d/(2d-2)<p<\infty.
\end{aligned}\right.
\end{equation}
Note that $\delta\in(0,1)$ is a small number given in Remark $\ref{remark:4.1}$,
and we just employ Meyer's estimate $\eqref{pri:4.3}$ (note that this estimate is also true for
$\varepsilon=1$.) in the case of $2d/(2d-2)<p<\infty$ with $d\geq 3$. In fact, a routine computation leads to
\begin{equation}\label{f:2.13}
\Big(\dashint_{Y} |b_i(\cdot,\xi)|^{\hat{q}}\Big)^{1/\hat{q}}
\lesssim |\xi|^{p-1},
\end{equation}
whose proof is similar to that given for $\eqref{f:2.14}$.
Then, it is not hard to see that $\hat{q}\geq q$ in each case,
where $q$ is shown in $\eqref{f:2.11}$. This together with the property (i) guarantees
the existence of the weak solution of
\begin{equation*}
\left\{\begin{aligned}
&\Delta f_i(\cdot,\xi) = b_i(\cdot,\xi) \quad\text{in}~ Y,\\
& f_i(\cdot,\xi)\in H^{1}_{per}(Y), \quad \dashint_{Y}f_i(\cdot,\xi) = 0
\end{aligned}\right.
\end{equation*}
for each $i$ and $\xi\in\mathbb{R}^d$. Also, we have the following energy estimate
\begin{equation}\label{f:2.12}
\begin{aligned}
\|E_{ji}\|_{L^2(Y)}\lesssim \|\nabla f_i\|_{L^2(Y)}
\lesssim \|b_i\|_{L^{\hat{q}}(Y)}\lesssim |\xi|^{p-1}.
\end{aligned}
\end{equation}
Let $E_{ji}(y,\xi)= \frac{\partial}{\partial y_j}\big\{f_{i}(y,\xi)\big\}
-\frac{\partial}{\partial y_i}\big\{f_{j}(y,\xi)\big\}$. Obviously, $E_{ji} = - E_{ij}$, and
one may derive the first expression in $\eqref{eq:2.4}$ from the fact $(\text{ii})$
in a weak sense. In fact, one may further show that $E_{ji}(\cdot,\xi)\in W^{1,\hat{p}}_{per}(Y)$ with
some $\hat{p}\in(0,2)$ given by
\begin{equation*}
  \hat{p} =\left\{ \begin{aligned}
  &\tilde{q}\in(1,2), &&\quad 1<p\leq 2, \text{~and~} d=1,2;\quad& \\
  &\hat{q}, &&\quad \text{other~cases}.\quad&
  \end{aligned}\right.
\end{equation*}
Let $Y\subset B(0,2/3)$, and
it follows from the Calder\'on-Zygmund theorem coupled with a localization
argument that,
\begin{equation}\label{f:2.10}
\begin{aligned}
\|\nabla^2 f(\cdot,\xi)\|_{L^{\hat{p}}(Y)}
&\lesssim \|\nabla f(\cdot,\xi)\|_{L^{\hat{p}}(B(0,1))}
+ \|b(\cdot,\xi)\|_{L^{\hat{p}}(B(0,1))}\\
&\lesssim \|\nabla f(\cdot,\xi)\|_{L^{2}(Y)}
+ \|b(\cdot,\xi)\|_{L^{\hat{q}}(Y)}\\
&\lesssim \|b(\cdot,\xi)\|_{L^{\hat{q}}(B(Y))},
\end{aligned}
\end{equation}
in which we use the periodicity of $f(\cdot,\xi)$ and a covering argument in the second inequality,
and the estimate $\eqref{f:2.12}$ in the last one.
Note that $1<\hat{p}\leq \hat{q}$ in each case. Collecting the estimates $\eqref{f:2.13}$ and
$\eqref{f:2.10}$, we obtain the estimate
\begin{equation*}
  \Big(\dashint_{Y}|\nabla E(\cdot,\xi)|^{\hat{p}}\Big)^{1/\hat{p}}
  \lesssim |\xi|^{p-1}.
\end{equation*}

Then, we turn to show the estimate $\eqref{pri:2.10}$.
For any $\xi,\xi^\prime\in\mathbb{R}^d$, note that
\begin{equation}\label{f:2.14}
\begin{aligned}
\Big(\dashint_{Y} |E_{ji}(y,\xi)- E_{ji}(y,\xi^\prime)|^2 dy\Big)^{\frac{1}{2}}
&\lesssim \Big(\dashint_{Y} \big|\nabla \big(f_{i}(y,\xi)-f_{i}(y,\xi^\prime)\big)\big|^2
dy\Big)^{\frac{1}{2}}\\
&\lesssim \dashint_{Y} \Big(\big|b_{i}(y,\xi)-b_{i}(y,\xi^\prime)\big|^q dy\Big)^{\frac{1}{q}},
\end{aligned}
\end{equation}
where we employ an $H^{1}$ estimate in the second step, and
\begin{equation}\label{f:2.11}
q = \left\{\begin{aligned}
&2d/(d+2) &\quad\text{if}\quad& d\geq 3;\\
&1<q<2 &\quad\text{if}\quad& d= 2;\\
&q=1&\quad\text{if}\quad& d=1.
\end{aligned}\right.
\end{equation}
Then, we carry out a computation in the case of $p>2$,
which in fact involves higher regularity of $P(\cdot,\xi)$. By definition,
the right-hand side of $\eqref{f:2.14}$
is controlled by
\begin{equation*}
\begin{aligned}
&\quad\Big(\dashint_{Y}
|A(\cdot,P(\cdot,\xi))-A(\cdot,P(\cdot,\xi^\prime))|^q\Big)^{1/q}
+ \big|\widehat{A}(\xi)-\widehat{A}(\xi^\prime)\big| \\
&\lesssim \underbrace{\Big(\dashint_{Y}
|P(\cdot,\xi)-P(\cdot,\xi^\prime)|^{q}
\big(|P(\cdot,\xi)|+|P(\cdot,\xi^\prime)|\big)^{q(p-2)}\Big)^{1/q}}_{I}
+ |\xi-\xi^\prime|\big(|\xi|+|\xi^\prime|\big)^{p-2}.
\end{aligned}
\end{equation*}
where we use the assumption $\eqref{c:1}$ and the estimate $\eqref{pri:2.3}$
in the inequality. Then, it follows from H\"older's inequality that
\begin{equation*}
\begin{aligned}
I &\leq  \Big(\dashint_{Y}
|P(\cdot,\xi)-P(\cdot,\xi^\prime)|^{p}\Big)^{\frac{1}{p}}
\Big(\|P(\cdot,\xi)\|_{L^\infty(Y)}+\|P(\cdot,\xi^\prime)\|_{L^\infty(Y)}\Big)^{p-2} \\
&\lesssim |\xi-\xi^\prime|^{\frac{2}{p}}\big(|\xi|+|\xi^\prime|\big)^{\frac{(p-2)(1+p)}{p}},
\end{aligned}
\end{equation*}
where we use the estimates $\eqref{pri:2.2}$, $\eqref{pri:2.9}$ in the second inequality.
This is the first line of $\eqref{pri:2.10}$ ,
and we continue to show the case $1<p\leq 2$. On account of $\eqref{pri:2.18}$,
it suffices to compute the following quantity, and by $\eqref{c:1}$ we have
\begin{equation*}
\begin{aligned}
\Big(\dashint_Y|A(\cdot,P(\cdot,\xi))-A(\cdot,P(\cdot,\xi^\prime))|^q\Big)^{1/q}
&\lesssim \Big(\dashint_Y|P(\cdot,\xi)-P(\cdot,\xi^\prime)|^{q(p-1)}\Big)^{1/q}\\
&\lesssim \Big(\dashint_Y|P(\cdot,\xi)-P(\cdot,\xi^\prime)|^{p}\Big)^{\frac{p-1}{p}}
\lesssim |\xi-\xi^\prime|^{\frac{p-1}{3-p}}
\big(|\xi|+|\xi^\prime|\big)^{\frac{(2-p)(p-1)}{3-p}},
\end{aligned}
\end{equation*}
where we use the fact that $\frac{p}{p-1}\geq 2$ in the second inequality (which gives $\frac{p}{q(p-1)}\geq 1$), and
$\eqref{pri:2.2}$ in the last one. This ends the proof of the stated estimate $\eqref{pri:2.10}$.

We now proceed to prove the estimate $\eqref{pri:2.14}$. On account of
Lemma $\ref{lemma:2.5}$, the assumptions $\eqref{a:1},\eqref{c:1}$, one may have
\begin{equation*}
 \|b(\cdot,\xi)\|_{L^\infty(\mathbb{R}^d)}
 \leq \|P(\cdot,\xi)\|_{L^\infty(\mathbb{R}^d)}^{p-1} + |\widehat{A}(\xi)|
 \lesssim |\xi|^{p-1}.
\end{equation*}
where we also use the estimate $\eqref{pri:2.9}$ in the second inequality.
For any
$y_1,y_2\in\mathbb{R}^d$ with $|y_1-y_2|\ll 1$, we first have
\begin{equation*}
\begin{aligned}
|b(y_1,\xi)-b(y_2,\xi)|
\lesssim\underbrace{\big|A(y_1,P(y_1,\xi))-A(y_2,P(y_1,\xi))\big|}_{I_1}
+\underbrace{\big|A(y_2,P(y_1,\xi))-A(y_2,P(y_2,\xi))\big|}_{I_2}.
\end{aligned}
\end{equation*}
Then, it follows from the assumption $\eqref{a:4}$ and the estimate $\eqref{pri:2.9}$ that
\begin{equation*}
\begin{aligned}
I_1 \leq \mu_3|y_1-y_2| \|P(\cdot,\xi)\|_{L^\infty(\mathbb{R}^d)}^{p-1}
\lesssim |\xi|^{p-1}|y_1-y_2|.
\end{aligned}
\end{equation*}
Similarly, by the assumption $\eqref{c:1}$, and the estimate $\eqref{pri:2.9}$ we derive that
\begin{equation*}
\begin{aligned}
I_2 \leq\mu_1\left\{\begin{aligned}
&\|P(\cdot,\xi)\|_{L^\infty(\mathbb{R}^d)}^{p-2}
[P(\cdot,\xi)]_{C^{0,\theta}(\mathbb{R}^d)}|y_1-y_2|^{\theta}\\
&[P(\cdot,\xi)]_{C^{0,\theta}(\mathbb{R}^d)}^{p-1}|y_1-y_2|^{(p-1)\theta}
\end{aligned}\right.
\lesssim\left\{\begin{aligned}
&|\xi|^{p-1}|y_1-y_2|^{\theta}
&\quad&2\leq p<\infty;\\
&|\xi|^{p-1}|y_1-y_2|^{(p-1)\theta}
&\quad&1<p<2.
\end{aligned}\right.
\end{aligned}
\end{equation*}
Hence, by setting $\rho=\min\{\theta,(p-1)\theta\}$, combining the above estimates leads to
\begin{equation}\label{f:2.16}
\big[b(\cdot,\xi)\big]_{C^{0,\rho}(\mathbb{R}^d)}
+ \|b(\cdot,\xi)\|_{L^{\infty}(\mathbb{R}^d)}
\lesssim |\xi|^{p-1}.
\end{equation}
This together with interior Schauder estimate (see for example \cite[Theorem 5.19]{MGLM}) leads to
\begin{equation*}
\big[\nabla^2 f_{i}\big]_{C^{0,\rho}(Y)}
\lesssim \|\nabla f_{i}\|_{L^2(B(0,2/3))}+\|b_{i}\|_{C^{0,\rho}(B(0,2/3))}
\lesssim |\xi|^{p-1},
\end{equation*}
in which we use the estimates $\eqref{f:2.12}$ and $\eqref{f:2.16}$ in the second inequality.
This implies the desired estimate $\eqref{pri:2.14}$, and
we have completed the whole proof.
\end{proof}

\begin{remark}\label{remark:2.4}
\emph{In fact, the estimate $\eqref{pri:2.10}$ is still true if the integrand
$|E_{ji}(y,\xi)-E_{ji}(y,\xi^\prime)|$ is replaced by
$|\nabla_yE_{ji}(y,\xi)-\nabla_yE_{ji}(y,\xi^\prime)|$. Since $E_{ji}(y,\xi)$ satisfies
\begin{equation}\label{eq:2.7}
\Delta E_{ji}(\cdot,\xi) = \big(\nabla \times b(\cdot,\xi)\big)_{ji} =:
\nabla_{j}b_i(\cdot,\xi) - \nabla_{i}b_j(\cdot,\xi)
\end{equation}
the result follows from the linearity and Caccioppoli's inequality, i.e.,
\begin{equation*}
\begin{aligned}
\Big(\dashint_{Y} |\nabla E_{ji}(\cdot,\xi)- \nabla E_{ji}(\cdot,\xi^\prime)|^2 \Big)^{1/2}
&\lesssim \Big(\dashint_{2Y} |E_{ji}(\cdot,\xi)- E_{ji}(\cdot,\xi^\prime)|^2 \Big)^{1/2}\\
&+  \Big(\dashint_{2Y}\big|b_{i}(\cdot,\xi)-b_{i}(\cdot,\xi^\prime)\big|^2 \Big)^{1/2}.
\end{aligned}
\end{equation*}
Thus, in view of $\eqref{pri:2.10}$, it suffices to estimate the last term above, while
it admits the same computations as in the proof of $\eqref{f:2.14}$. We do not repeat here.
Finally, we mention that the equation $\eqref{eq:2.7}$ may be referred to as the definition of
the flux corrector (see \cite[Lemma 1]{GNF1}).}
\end{remark}

\begin{lemma}\label{lemma:2.8}
Let $\varphi\in C_0^1(\Omega;\mathbb{R}^d)$.
Assume the same conditions as in Lemma $\ref{lemma:2.3}$.
Let $E_{ji}(\cdot,\xi)\in W^{1,\hat{p}}_{per}(Y)\cap L^2_{per}(Y)$ with $\hat{p}\in(1,2)$ be the flux corrector,
then there exists $\Psi_{ji}(\cdot,\varphi)\in L^2(Y)$ satisfying
\begin{equation}\label{pri:2.11}
\Big(\dashint_{Y}|\Psi_{ji}(\cdot,\varphi)|^2\Big)^{\frac{1}{2}} \lesssim
\left\{\begin{aligned}
& |\varphi|^{\frac{(p-2)(1+p)}{p}} &~~~\text{if}~~& 2\leq p<\infty;\\
& |\varphi|^{\frac{(2-p)(p-1)}{3-p}} &~~~\text{if}~~& 1< p \leq 2,
\end{aligned}\right.
\end{equation}
such that
\begin{itemize}
  \item in the case of $p=2$, we have
  \begin{equation}\label{eq:2.5}
    \varepsilon\frac{\partial}{\partial x_j}\big[E_{ji}(x/\varepsilon,\varphi)\big]
    = \frac{\partial}{\partial y_j}\big[E_{ji}(x/\varepsilon,\varphi)\big]
    + \varepsilon \frac{\partial}{\partial\xi_k}
    \big[E_{ji}(\cdot,\varphi)\big]\frac{\partial \varphi_k}{\partial x_j}
  \end{equation}
  with the estimate
  \begin{equation}\label{pri:2.12}
     \Big(\dashint_{Y} |\nabla_{\xi} E_{ji}(\cdot,\varphi)|^2\Big)^{1/2}
     \leq C(\mu_0,\mu_1,d);
  \end{equation}
  \item in the case of $p\in(1,2)\cup(2,\infty)$, there holds
  \begin{equation}\label{eq:2.6}
D_j^{h}(\varepsilon E_{ji}(y,\varphi)) =
\int_0^1 \partial_j E_{ji}(y+t(h/\varepsilon)e_j,\bar{\varphi}) dt
+\varepsilon h^{\gamma-1}\Psi_{ji}(y,\varphi)\big|D_j^{h}\varphi\big|^\gamma,
\end{equation}
where $\bar{\varphi} = \varphi(\cdot+he_j)$, and $\gamma$ is defined in $\eqref{notation:1}$.
\end{itemize}
Moreover, let $h=\varepsilon^\tau$ with
$1<\tau<1/(1-\gamma)$, and $1<r\leq p/(p-1)$, we obtain
\begin{equation}\label{pri:2.13}
\begin{aligned}
&\int_\Omega
|D_j^{h}(\varepsilon E_{ji}(x/\varepsilon,\varphi))
- \partial_j E_{ji}(x/\varepsilon,\varphi)|^{r} dx
\\
& \lesssim \varepsilon^{r\rho(\tau-1)}\int_\Omega |\varphi|^{r(p-1)} dx
+ \varepsilon^{r-r(1-\gamma)\tau}
\left\{\begin{aligned}
&\int_\Omega |\varphi|^{\frac{r(p-2)(1+p)}{p}}|D_j^h\varphi|^{\frac{2r}{p}}dx &~\text{if}~& 2<p<\infty;\\
&\int_\Omega |\varphi|^{\frac{r(2-p)(p-1)}{3-p}}|D_j^h\varphi|^{\frac{(p-1)r}{3-p}}dx &~\text{if}~& 1<p<2,
\end{aligned}\right.
\end{aligned}
\end{equation}
in which $\rho=\min\{\theta,\theta(p-1)\}$, and the up to constant depends on $\mu_0,\mu_1,\mu_2,p$ and $d$.
\end{lemma}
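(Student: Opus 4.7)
The plan is to systematically adapt the proofs of Lemmas $\ref{lemma:2.4}$ and $\ref{lemma:2.9}$ from the corrector $N$ to the flux corrector $E$, using the H\"older-in-$\xi$ estimate $\eqref{pri:2.10}$ and the pointwise H\"older estimate $\eqref{pri:2.14}$ from Lemma $\ref{lemma:2.3}$ (together with Remark $\ref{remark:2.4}$) in place of their counterparts for $N$.

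First I would split the difference quotient $D_j^h(\varepsilon E_{ji}(y,\varphi))$ into the same two pieces used in the proof of Lemma $\ref{lemma:2.4}$: an ``internal shift'' piece of the form $\int_0^1 \partial_j E_{ji}(y+t(h/\varepsilon)e_j,\bar\varphi)\,dt$, and an ``external shift'' piece obtained by changing only the second argument from $\varphi(x)$ to $\varphi(x+he_j)$. The latter is then rewritten as $\varepsilon h^{\gamma-1}\Psi_{ji}(y,\varphi)|D_j^h\varphi|^\gamma$ via the definition
\begin{equation*}
\Psi_{ji}(y,\varphi) = \frac{E_{ji}(y,\varphi(x+he_j)) - E_{ji}(y,\varphi(x))}{|\varphi(x+he_j)-\varphi(x)|^\gamma},
\end{equation*}
which identifies the exponent $\gamma$ from $\eqref{notation:1}$ as the natural H\"older exponent in the second variable. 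The $L^2(Y)$ bound $\eqref{pri:2.11}$ then follows immediately from $\eqref{pri:2.10}$ applied to $\xi=\varphi(x)$, $\xi^\prime=\varphi(x+he_j)$, using $|\xi^\prime|\le 2|\xi|$ for $h$ sufficiently small. For the case $p=2$, I would separately exploit the fact that $\eqref{pde:1.2}$ is linear in $\xi$, so that $N(y,\xi)=N_k(y)\xi_k$ and consequently $b(y,\xi)=b_k(y)\xi_k$ and $E_{ji}(y,\xi)=E_{ji,k}(y)\xi_k$ are linear in $\xi$; the identity $\eqref{eq:2.5}$ is then the chain rule, and $\eqref{pri:2.12}$ follows from taking $\xi^\prime=0$ in $\eqref{pri:2.10}$ (equivalently, the energy estimate $\eqref{f:2.12}$ applied to each component $E_{ji,k}$).

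For the quantitative bound $\eqref{pri:2.13}$ in the case $p\neq 2$, I would split the $L^r$ integrand into three pieces $J_1,J_2,J_3$: $J_1$ from the shift along $t(h/\varepsilon)e_j$ in the first argument of $\partial_j E_{ji}$; $J_2$ from replacing $\bar\varphi$ by $\varphi$ in the second argument; and $J_3$ from the $\Psi_{ji}$ term. I would estimate $J_1$ using $\eqref{pri:2.14}$, namely $[\nabla E_{ji}(\cdot,\bar\varphi)]_{C^{0,\rho}}\lesssim|\varphi|^{p-1}$, combined with $h=\varepsilon^\tau$, giving $J_1\lesssim\varepsilon^{r\rho(\tau-1)}\int_\Omega|\varphi|^{r(p-1)}dx$; $J_3$ via a cell-wise decomposition $\{Y_\varepsilon^k\}$ and $\eqref{pri:2.11}$, yielding the second term on the right of $\eqref{pri:2.13}$ with the exponents matching $\gamma$; and $J_2$ via the $\nabla_y E$ version of $\eqref{pri:2.10}$ from Remark $\ref{remark:2.4}$, combined with a Riemann-sum argument as in $\eqref{f:2.22}$, which gains a factor $\varepsilon^{r\tau\gamma}$ and so gets absorbed into $J_3$ since $\tau>1$.

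The main obstacle I anticipate is the limited integrability in the first variable: unlike $\eqref{pri:2.2}$ for $N$, the H\"older-in-$\xi$ bound for $E$ is known only in $L^2(Y)$, so the auxiliary function $\Psi_{ji}$ is controlled only in $L^2(Y)$. This is exactly what forces the restriction $r\le p/(p-1)$ (equivalently $r(p-1)\le p$, so the $\varphi$-powers arising in $J_1$ and $J_3$ remain at most $L^p$-type), and requires careful cell-by-cell use of Jensen's inequality, periodicity as in $\eqref{f:2.3}$, and Chebyshev's inequality to reduce the spatial integrals over $\Omega$ to the form on the right-hand side of $\eqref{pri:2.13}$. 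A secondary but nontrivial bookkeeping task will be matching the exponents $\gamma$, $\rho=\min\{\theta,(p-1)\theta\}$ and the weight $(p-2)(1+p)/p$ versus $(2-p)(p-1)/(3-p)$ coming from $\eqref{pri:2.11}$ consistently across the two regimes $p>2$ and $1<p<2$.
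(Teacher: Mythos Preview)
Your proposal is correct and follows essentially the same route as the paper's proof: the paper defines $\Psi_{ji}$ exactly as you do, derives $\eqref{pri:2.11}$ directly from $\eqref{pri:2.10}$, refers to \cite{WXZ1} for the $p=2$ identities $\eqref{eq:2.5}$--$\eqref{pri:2.12}$ (your linearity argument is the content of that reference), and for $\eqref{pri:2.13}$ performs the identical three-piece split, handling the $y$-shift via $\eqref{pri:2.14}$, the $\xi$-shift via Remark~$\ref{remark:2.4}$ together with the cell-wise Riemann-sum argument of $\eqref{f:2.22}$, and the $\Psi$-term via a cell decomposition using $\eqref{pri:2.11}$. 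Your observation that the $\xi$-shift term (your $J_2$) carries the factor $\varepsilon^{r\tau\gamma}$ and hence is absorbed by the $\Psi$-term is exactly how the paper disposes of its $I_3$.
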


\begin{remark}
\emph{Although one can not
expect that the estimate
$\eqref{pri:2.11}$ is true in a $L^\infty$-norm,
the quantities
$\Psi(\cdot/\varepsilon,\varphi(\cdot))$ and
$\partial_j E_{ji}(\cdot/\varepsilon,\varphi(\cdot))$
are still continuous under the smoothness assumption $\eqref{a:4}$. In the end, we mention that
the estimate $\eqref{pri:2.13}$ is also valid for the operator $D_j^{-h}$.}
\end{remark}

\begin{proof}
Obviously, using the same argument as in the proof of Lemma $\ref{lemma:2.4}$, we may
easily carry out the proof for the estimate $\eqref{pri:2.11}$ and the formula
$\eqref{eq:2.6}$, which are all based upon the estimate $\eqref{pri:2.10}$.
Meanwhile, $\eqref{eq:2.5}$ and $\eqref{pri:2.12}$
have been shown in the previous work \cite{WXZ1}.
Here we just give the expression of
\begin{equation*}
  \Psi_{ji}(y,\varphi)
  = \frac{E_{ji}(y,\varphi(\cdot+he_i)) - E_{ji}(y,\varphi)}{
|\varphi(\cdot+he_i)-\varphi|^\gamma},
\end{equation*}
and note that $|\varphi(\cdot+he_i)|\lesssim |\varphi|$ as long as $h$ is sufficiently small, and
this inequality is independent of the variable of $\varphi$,
and this has shown the equality $\eqref{eq:2.6}$.

Then we turn to show the estimate $\eqref{pri:2.13}$, and use the same notation
as in the proof of Lemma $\ref{lemma:2.4}$. Let $Y_{\varepsilon}^i = \varepsilon(i + Y)$ for $i\in \mathbb{Z}^d$, and we collect a family of $\{Y_\varepsilon^i\}$, whose index set is denoted by $I_\varepsilon$, such that
$\text{supp}\varphi\subset \cup_{i\in I_\varepsilon}Y_\varepsilon^i\subset\Omega$ and
$Y_\varepsilon^i\cap Y_\varepsilon^j = \emptyset$ if $i\not=j$.
Thus, the left-hand side of
$\eqref{pri:2.13}$ may be controlled by
\begin{equation*}
\begin{aligned}
\sum_{k\in I_\varepsilon}\int_{Y_\varepsilon^k}
\big|D_j^h(\varepsilon E_{ji}(x/\varepsilon,\varphi))
&-\partial_j E_{ji}(x/\varepsilon,\varphi)\big|^rdx
\leq \varepsilon^{r-r(1-\gamma)\tau} \underbrace{\sum_{k\in I_\varepsilon}\int_{Y_\varepsilon^k}
\big|\Psi_{ji}(y,\varphi)\big|^r |D_j^{h}\varphi|^{r\gamma} dx}_{I_1} \\
& + \underbrace{\int_{\Omega}\int_0^1
\big|\partial_j E_{ji}(y+t(h/\varepsilon)e_j,\bar{\varphi})
- \partial_j E_{ji}(y,\varphi)\big|^rdt dx}_{I_2},
\end{aligned}
\end{equation*}
where $\varphi^k = \inf_{x\in Y_\varepsilon^k}|\varphi|$, and $\hat{x}\in Y_\varepsilon^k$.
Then, by a similar computation given in the proof of $\eqref{f:2.20}$,
we have
\begin{equation}\label{f:2.17}
\begin{aligned}
I_2
&\lesssim (h/\varepsilon)^{r\rho}\int_0^1
t^{r\rho}dt\int_\Omega\big[\nabla E(\cdot,\bar{\varphi})\big]_{C^{0,\rho}(\mathbb{R}^d)}^{r}dx
+ \underbrace{\int_{\Omega}
\big|\partial_j E_{ji}(y,\bar{\varphi})
- \partial_j E_{ji}(y,\varphi)\big|^r dx}_{I_3}\\
&\lesssim \varepsilon^{r\rho(\tau-1)}\int_\Omega|\varphi|^{r(p-1)}dx
+ \varepsilon^{r\tau\gamma}
\int_\Omega |\nabla\varphi|^{r\gamma}|\varphi|^{r\tilde{\beta}}dx,
\end{aligned}
\end{equation}
where we use the estimate $\eqref{pri:2.14}$ to derive the first term in the second inequality.
Note that the calculation for $I_3$ is totally similar to that given in $\eqref{f:2.22}$,
and a required estimate could be found in Remark $\ref{remark:2.4}$. The notation $\tilde{\beta}$
is given after the estimate $\eqref{f:2.18}$.
Also, an analogy computation leads to
\begin{equation}\label{f:2.18}
\begin{aligned}
I_1
&\lesssim \sum_{k\in I_\varepsilon}|Y_\varepsilon^k|\dashint_{Y_\varepsilon^k}
|\Psi(y,\varphi)|^r
dx|(D_j^h\varphi)(\hat{x})|^{r\gamma}\\
&\lesssim \sum_{k\in I_\varepsilon}|Y_\varepsilon^k||\varphi^k|^{r\tilde{\beta}}
|(D_j^h\varphi)(\hat{x})|^{r\gamma}
\lesssim \int_\Omega |\varphi|^{r\tilde{\beta}}|(D_j^h\varphi)|^{r\gamma} dx,
\end{aligned}
\end{equation}
where $\tilde{\beta} = (p-2)(1+p)/p$ if $2<p<\infty$, and
$\tilde{\beta}= (2-p)(p-1)/(3-p)$ if $1<p<2$. Here the point $\hat{x}\in Y_\varepsilon^k$ is such
that $|D_i^h(\varphi)(\hat{x})|=\sup_{x\in Y_\varepsilon^k}|D_i^h(\varphi)(x)|$, and
then we choose $\varphi^k = \varphi(\hat{x})$. In fact, we employ the estimate $\eqref{pri:2.11}$
in the second inequality, and the last one is due to the definition of Riemann integral
for $\varphi\in C_0^1(\Omega)$.
Finally, the estimates $\eqref{f:2.17}$ and
$\eqref{f:2.18}$ imply the desired estimate $\eqref{pri:2.13}$, and
the proof is complete.
\end{proof}

\subsection{Smoothing operator and its properties}

\begin{definition}\label{def:1}
Fix a nonnegative function $\zeta\in C_0^\infty(B(0,1/2))$, and $\int_{\mathbb{R}^d}\zeta(x)dx = 1$. Define the smoothing operator
\begin{equation}\label{def:2.1}
S_\varepsilon(f)(x) = f*\zeta_\varepsilon(x) = \int_{\mathbb{R}^d} f(x-y)\zeta_\varepsilon(y) dy,
\end{equation}
where $\zeta_\varepsilon=\varepsilon^{-d}\zeta(x/\varepsilon)$.
\end{definition}

\begin{lemma}
Let $f\in L^p(\Omega)$
for some $1\leq p<\infty$. Then for any $\varpi\in L_{per}^p(Y)$,
\begin{equation}\label{pri:2.6}
\big\|\varpi(\cdot/\varepsilon)S_\varepsilon(f)\big\|_{L^p(\Omega)}
\leq C\big\|\varpi\big\|_{L^p(Y)}\big\|f\big\|_{L^p(\Omega)},
\end{equation}
where $C$ depends only on $\zeta$ and $d$.
\end{lemma}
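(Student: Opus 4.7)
The plan is to combine Jensen's inequality for the convolution with a periodic change of variables. Since $\zeta\ge 0$ and $\int \zeta_\varepsilon = 1$, Jensen's inequality yields the pointwise bound
\begin{equation*}
|S_\varepsilon(f)(x)|^p \;\leq\; \int_{\mathbb{R}^d} |f(x-y)|^p\,\zeta_\varepsilon(y)\,dy.
\end{equation*}
Multiplying by $|\varpi(x/\varepsilon)|^p$, integrating over $\Omega$, extending $f$ by zero outside $\Omega$, and applying Fubini to move the $y$-integration outside, the problem reduces to a uniform-in-$z$ bound on the inner integral
\begin{equation*}
I(z) \;:=\; \int_{B(0,\varepsilon/2)} |\varpi((z+y)/\varepsilon)|^p\,\zeta_\varepsilon(y)\,dy.
\end{equation*}

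To handle $I(z)$, I would first estimate $\zeta_\varepsilon(y)\leq \|\zeta\|_\infty\,\varepsilon^{-d}$ on its support, then change variables $u=(z+y)/\varepsilon$, so $dy=\varepsilon^d\,du$. This converts $I(z)$ into a constant (depending on $\zeta$) times $\int_{B(z/\varepsilon,1/2)}|\varpi(u)|^p\,du$. Since $B(z/\varepsilon,1/2)$ has diameter one, it is contained in an axis-aligned unit cube; by the $\mathbb{Z}^d$-periodicity of $\varpi$, the integral of $|\varpi|^p$ over any such cube equals $\|\varpi\|_{L^p(Y)}^p$. Hence $I(z)\leq C(\zeta,d)\|\varpi\|_{L^p(Y)}^p$ uniformly in $z$, and feeding this back into the outer $z$-integral (which produces $\|f\|_{L^p(\Omega)}^p$) gives the stated inequality after taking $p$-th roots.

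I do not anticipate any genuine obstacle here: this is a standard convolution estimate, and the two ingredients (Jensen's inequality and periodic averaging) are independent of the nonlinear setting of the paper. The only point deserving a brief check is the claim that the $L^p$-integral of a $\mathbb{Z}^d$-periodic function over any axis-aligned unit cube equals its integral over $Y$; this follows at once by partitioning such a cube into at most $2^d$ boxes and translating each by the appropriate lattice vector, so the final constant $C$ depends only on $\|\zeta\|_\infty$ and $d$, as claimed.
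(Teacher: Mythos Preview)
Your proof is correct. The paper itself does not give a proof but simply cites \cite[Lemma 2.1]{S5} and \cite[Lemma 3.2]{X3}; the argument in those references is exactly the one you have written---Jensen's inequality for the convolution, Fubini, and periodic averaging---so your proposal is a correct self-contained version of the cited proof.
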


\begin{proof}
The proof may be find in \cite[Lemma 2.1]{S5} or \cite[Lemma 3.2]{X3}.
\end{proof}

\begin{lemma}\label{lemma:2.7}
Let $f\in W^{1,p}(\Sigma_{\varepsilon})$ for some $1<p<\infty$. Then we have
\begin{equation}\label{pri:2.7}
\big\|S_\varepsilon(f)-f\big\|_{L^p(\Sigma_{2\varepsilon})}
\leq C\varepsilon\big\|\nabla f\big\|_{L^p(\Sigma_{\varepsilon})},
\end{equation}
where $C$ depends only on $d$.
\end{lemma}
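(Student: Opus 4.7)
The plan is to use the standard convolution-smoothing trick: write the difference $S_\varepsilon(f) - f$ as an integral against $\zeta_\varepsilon$ of the increment $f(x-y) - f(x)$, represent this increment via the fundamental theorem of calculus for Sobolev functions, and then apply Jensen's inequality together with Fubini. By density one may assume $f \in C^1(\Sigma_\varepsilon)$ and pass to the limit.

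More concretely, first I would record the geometric fact that for any $x \in \Sigma_{2\varepsilon}$, any $y \in B(0,\varepsilon/2)$ (the support of $\zeta_\varepsilon$), and any $t \in [0,1]$, the point $x - ty$ still lies in $\Sigma_\varepsilon$; this is immediate from $\mathrm{dist}(x-ty,\partial\Omega) \geq \mathrm{dist}(x,\partial\Omega) - |y| > 2\varepsilon - \varepsilon/2 > \varepsilon$. This is the observation that lets us stay inside the domain of definition of $\nabla f$ during the calculation.

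Next, using $\int \zeta_\varepsilon = 1$, I would write
\begin{equation*}
S_\varepsilon(f)(x) - f(x) = \int_{\mathbb{R}^d}\zeta_\varepsilon(y)\bigl[f(x-y)-f(x)\bigr]\,dy
= -\int_{\mathbb{R}^d}\zeta_\varepsilon(y)\int_0^1 \nabla f(x-ty)\cdot y\,dt\,dy,
\end{equation*}
so that $|S_\varepsilon(f)(x)-f(x)| \leq \varepsilon\int\zeta_\varepsilon(y)\int_0^1|\nabla f(x-ty)|\,dt\,dy$, using $|y|\leq \varepsilon/2$ on the support of $\zeta_\varepsilon$. Raising to the $p$-th power and applying Jensen's inequality to the probability measure $\zeta_\varepsilon(y)\,dy \otimes dt$ on $\mathbb{R}^d \times [0,1]$ yields a clean pointwise bound $|S_\varepsilon(f)(x)-f(x)|^p \lesssim \varepsilon^p \int\zeta_\varepsilon(y)\int_0^1 |\nabla f(x-ty)|^p\,dt\,dy$.

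Finally I would integrate over $x \in \Sigma_{2\varepsilon}$, swap the order of integration via Fubini, and for each fixed $(y,t)$ perform the change of variables $z = x - ty$; the preceding geometric observation ensures the $z$-integral is taken over a subset of $\Sigma_\varepsilon$, producing $\|\nabla f\|_{L^p(\Sigma_\varepsilon)}^p$. Since $\int\zeta_\varepsilon = 1$, the remaining factors in $y$ and $t$ contribute only an absolute constant depending on $d$ (and on $\zeta$, but $\zeta$ has been fixed once and for all in Definition~\ref{def:1}). There is no serious obstacle here; the one point requiring care is the containment $x-ty \in \Sigma_\varepsilon$, which is exactly why the inequality is stated with $\Sigma_{2\varepsilon}$ on the left and $\Sigma_\varepsilon$ on the right, and why the hypothesis only requires $f \in W^{1,p}(\Sigma_\varepsilon)$ rather than on all of $\Omega$.
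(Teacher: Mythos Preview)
Your proof is correct and is essentially the same approach as the paper's: the paper invokes the translation estimate $\|f(\cdot-\varepsilon y)-f(\cdot)\|_{L^p(\Sigma_{2\varepsilon})}\leq \varepsilon\|\nabla f\|_{L^p(\Sigma_\varepsilon)}$ for $|y|\leq 1$ (which is exactly your fundamental-theorem-of-calculus step) and then applies Minkowski's inequality, whereas you unpack the translation estimate explicitly and use Jensen plus Fubini in place of Minkowski. The geometric observation $x-ty\in\Sigma_\varepsilon$ that you highlight is precisely what makes both arguments go through on the co-layer sets.
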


\begin{proof}
The proof may be found in \cite[Lemma 2.2]{S5} and \cite[Lemma 3.3]{X3}, which is based upon
the characterization of $W^{1,p}$ functions, i.e.,
$\|f(\cdot-\varepsilon y) - f(\cdot)\|_{L^p(\Sigma_{2\varepsilon})}
\leq \varepsilon\|\nabla f\|_{L^p(\Sigma_{\varepsilon})}$, where $|y|\leq 1$. This
together with Minkowski’s inequality consequently leads to the stated estimate $\eqref{pri:2.7}$.
\end{proof}

\begin{remark}\label{remark:2.3}
\emph{To obtain the inequality $\eqref{pri:1.9}$,
we need to apply the estimate $\eqref{pri:2.7}$ in each of small cells $Y_{\varepsilon}^k$ (see the proof
of $\eqref{f:3.20}$). So,
it is crucial in applications that the constant $C$ is independent of the size of domain.
Compared to \cite[Lemma 3.3]{X3}, it is not hard to see that $|\nabla u_0|^r$ with some $r>1$
in $\eqref{pri:1.9}$ plays a role as a weight.}
\end{remark}

\section{Convergence rates}\label{sec:3}

\subsection{Variational formula}

\begin{lemma}\label{lemma:3.1}
Let $\Omega\subset\mathbb{R}^d$ be a bounded Lipschitz domain.
Suppose that $u_\varepsilon, u_0\in W^{1,p}(\Omega)$
satisfy $\mathcal{L}_\varepsilon u_\varepsilon = \mathcal{L}_0 u_0$ in
$\Omega$ with $u_\varepsilon = u_0$ on $\partial\Omega$. For any $\varphi\in C^1_0(\Omega;\mathbb{R}^d)$,
the first-order approximating corrector is given by
$v_\varepsilon = u_0 + \varepsilon N(x/\varepsilon,\varphi)$, and let
$V=(V_i)\in L^p(\Omega;\mathbb{R}^d)$, with
\begin{equation}\label{eq:3.1}
 V_i(x) = \nabla_i u_0(x)  + D^{h}_{i}\big(\varepsilon N(x/\varepsilon,\varphi)\big)
\end{equation}
for some $0<h<\varepsilon$.
Assume that $A$ satisfies $\eqref{a:1}$ and $\eqref{c:1}$,
then we have
\begin{equation}\label{eq:3.2}
\begin{aligned}
&\int_{\Omega} \big(A(x/\varepsilon,\nabla u_\varepsilon) - A(x/\varepsilon,V)\big)
\cdot (\nabla u_\varepsilon - V) dx\\
&= \int_\Omega \big[\widehat{A}(\nabla u_0) - A(x/\varepsilon,V)\big] \cdot(\nabla u_\varepsilon - V) dx
- o(\varepsilon)\int_\Omega N(x/\varepsilon,\varphi) dx
\end{aligned}
\end{equation}
as $h$ is sufficiently small.
\end{lemma}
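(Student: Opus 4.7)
The plan is to combine the weak formulations of the two equations with an integration-by-parts for difference quotients. First, since $\varphi \in C^1_0(\Omega;\mathbb{R}^d)$ has compact support and $N(y,0)=0$ by Remark~\ref{remark:2.1}, the function $g(x):=\varepsilon N(x/\varepsilon,\varphi(x))$ is compactly supported strictly inside $\Omega$. In particular $v_\varepsilon = u_0 + g$ agrees with $u_0$ on $\partial\Omega$, so $u_\varepsilon - u_0 \in W^{1,p}_0(\Omega)$ is an admissible test function.

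Writing $B := A(x/\varepsilon,\nabla u_\varepsilon) - \widehat{A}(\nabla u_0)$ and subtracting the weak formulations of $\mathcal{L}_\varepsilon u_\varepsilon = \mathcal{L}_0 u_0$ tested against $u_\varepsilon - u_0$, I obtain the orthogonality
\begin{equation*}
\int_\Omega B \cdot \nabla(u_\varepsilon - u_0)\, dx = 0.
\end{equation*}
The discrepancy between the two sides of $\eqref{eq:3.2}$, before the error term, is precisely $\int_\Omega B \cdot (\nabla u_\varepsilon - V)\, dx$. Using the decomposition $\nabla u_\varepsilon - V = \nabla(u_\varepsilon - u_0) - D^h g$ together with the orthogonality above, this discrepancy collapses to $-\int_\Omega B \cdot D^h g\, dx$. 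Since $g$ is compactly supported in $\Omega$, for $h$ small enough the shift $g(\cdot + he_i)$ also has support inside $\Omega$, and the standard integration-by-parts for difference quotients applies without boundary terms:
\begin{equation*}
-\int_\Omega B \cdot D^h g\, dx = \sum_{i=1}^d \int_\Omega D^{-h}_i B_i \cdot g\, dx = \varepsilon \sum_{i=1}^d \int_\Omega D^{-h}_i B_i \cdot N(x/\varepsilon,\varphi)\, dx.
\end{equation*}

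To finish, I use that $\mathcal{L}_\varepsilon u_\varepsilon = \mathcal{L}_0 u_0$ forces $\sum_i \nabla_i B_i = 0$ in the distributional sense, so as $h\to 0$ the discrete divergence $\sum_i D^{-h}_i B_i$ vanishes in a weak dual sense. Pairing against the bounded compactly supported function $N(x/\varepsilon,\varphi)$, whose $L^\infty$ bound comes from Lemma~\ref{lemma:2.6}, shows that the coefficient multiplying $\int_\Omega N(x/\varepsilon,\varphi)\, dx$ is an $o(\varepsilon)$ quantity in the sense agreed in Subsection~\ref{subsec:1.1}, which completes the identity.

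The hardest part is precisely this last step. Since $B$ only belongs to $L^{p'}(\Omega)$ and carries no $W^{1,p'}$ regularity in general, the discrete divergence $\sum_i D^{-h}_i B_i$ does not converge to zero in any Lebesgue norm; its vanishing can only be read off in duality with the specific test function $N(x/\varepsilon,\varphi)$, using the $L^\infty$ bound and the compact support of $\varphi$ to keep the pairing under control as $h\to 0$. This is precisely the mechanism responsible for the systematic error acknowledged in the statement.
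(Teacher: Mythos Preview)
Your proof is correct and follows essentially the same route as the paper: both arguments set $B=A(x/\varepsilon,\nabla u_\varepsilon)-\widehat{A}(\nabla u_0)$, use the orthogonality $\int_\Omega B\cdot\nabla(u_\varepsilon-u_0)=0$ from $u_\varepsilon-u_0\in W^{1,p}_0(\Omega)$, apply integration by parts for difference quotients to $\int_\Omega B\cdot D^h g$, and then invoke $D_i^{-h}B_i=o(1)$ (in the weak sense you describe) to identify the remaining term as $o(\varepsilon)\int_\Omega N(x/\varepsilon,\varphi)\,dx$. Your write-up is slightly more explicit about the compact support of $g$ and the dual nature of the final convergence, but the mechanism is identical.
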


\begin{proof}
In fact, it suffices to show
\begin{equation*}
\int_{\Omega} A(x/\varepsilon,\nabla u_\varepsilon)
\cdot (\nabla u_\varepsilon - V) dx
=\int_\Omega \widehat{A}(\nabla u_0)\cdot(\nabla u_\varepsilon - V) dx
- o(\varepsilon)\int_\Omega N(x/\varepsilon,\varphi) dx.
\end{equation*}
In view of
$\mathcal{L}_\varepsilon u_\varepsilon = \mathcal{L}_0 u_0$ in $\Omega$, one may have
\begin{equation*}
 D_i^{-h} \big[A(x/\varepsilon,\nabla u_\varepsilon)-\widehat{A}(\nabla u_0)\big]
 = o(1),
\end{equation*}
as $h\to 0$.
This gives
\begin{equation*}
\int_\Omega D_i^{-h} \big[A_i(x/\varepsilon,\nabla u_\varepsilon)-\widehat{A}_i(\nabla u_0)\big] \varepsilon N(x/\varepsilon,\varphi) dx
= o(\varepsilon)\int_\Omega N(x/\varepsilon,\varphi) dx,
\end{equation*}
and using integration by parts for difference quotients,
\begin{equation*}
\int_\Omega A_i(x/\varepsilon,\nabla u_\varepsilon) D_i^{h} \big(\varepsilon N(x/\varepsilon,\varphi)\big) dx
= \int_\Omega \widehat{A}_i(\nabla u_0) D_i^{h} \big(\varepsilon N(x/\varepsilon,\varphi)\big) dx
-o(\varepsilon)\int_\Omega N(x/\varepsilon,\varphi) dx.
\end{equation*}
Note that $N(x/\varepsilon,\varphi) = 0$ on $\partial\Omega$ due to Remark $\eqref{remark:2.1}$.
On the other hand, we have
\begin{equation*}
\int_\Omega A(x/\varepsilon,\nabla u_\varepsilon) \cdot\big(\nabla u_\varepsilon - \nabla u_0\big) dx
= \int_{\Omega} \widehat{A}(\nabla u_0)\cdot\big(\nabla u_\varepsilon - \nabla u_0\big) dx,
\end{equation*}
where we also use the fact that $u_\varepsilon-u_0\in W_0^{1,p}(\Omega)$. Obviously, the
above two equalities imply the desired equality $\eqref{eq:3.2}$. We are done.
\end{proof}

\begin{lemma}\label{lemma:3.4}
Assume the same conditions as in Lemma $\ref{lemma:3.1}$.
Let $K = (K_i)$ be with
\begin{equation}\label{eq:3.4}
K_i = \nabla_i u_\varepsilon - \nabla_i u_0
- D_i^{h}\big(\varepsilon N(y,\varphi_\varepsilon)\big),
\qquad y = x/\varepsilon.
\end{equation}
Also assume $\varphi_\varepsilon = S_\varepsilon(\psi_{4\varepsilon}\nabla u_0)$, where
$\psi_{4\varepsilon}\in C_0^1(\Omega)$ is defined in $\eqref{notation:2}$.
Then we have
\begin{equation}\label{pri:3.16}
\begin{aligned}
\|K\|_{L^p(\Omega)}^p
\leq  \bigg|\int_\Omega
&\Big[\widehat{A}(\nabla u_0) - \widehat{A}(\varphi_\varepsilon) \\
& +\widehat{A}(\varphi_\varepsilon) - A(x/\varepsilon,P(x/\varepsilon,\varphi_\varepsilon)) \\
& +A(x/\varepsilon,P(x/\varepsilon,\varphi_\varepsilon))
- A(x/\varepsilon,V)\Big] \cdot K dx\bigg|,
\end{aligned}
\end{equation}
by ignoring a quantity $o(\varepsilon)$.
\end{lemma}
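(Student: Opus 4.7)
The natural starting point is Lemma \ref{lemma:3.1} applied with the test vector field $\varphi=\varphi_\varepsilon = S_\varepsilon(\psi_{4\varepsilon}\nabla u_0)$. Since $\psi_{4\varepsilon}$ is supported in $\Sigma_{8\varepsilon}$ and the mollifier $\zeta_\varepsilon$ has support in $B(0,\varepsilon/2)$, $\varphi_\varepsilon\in C_0^1(\Omega;\mathbb{R}^d)$, so Lemma \ref{lemma:3.1} gives
\begin{equation*}
\int_\Omega\bigl(A(x/\varepsilon,\nabla u_\varepsilon)-A(x/\varepsilon,V)\bigr)\cdot K\,dx
=\int_\Omega\bigl[\widehat{A}(\nabla u_0)-A(x/\varepsilon,V)\bigr]\cdot K\,dx
-o(\varepsilon)\int_\Omega N(x/\varepsilon,\varphi_\varepsilon)\,dx.
\end{equation*}
The remainder term is admissible because, by $\eqref{pri:2.19}$ and $\eqref{pri:2.6}$, $\|N(\cdot/\varepsilon,\varphi_\varepsilon)\|_{L^1(\Omega)}\lesssim\|\varphi_\varepsilon\|_{L^1(\Omega)}\lesssim\|\nabla u_0\|_{L^1(\Omega)}$, so the full contribution is of order $o(\varepsilon)$, which is the systematic error we are allowed to ignore.

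Next I would pass from the bilinear energy to $\|K\|_{L^p(\Omega)}^p$ using the coerciveness in $\eqref{c:1}$, which yields the pointwise inequality $\mu_0(|\nabla u_\varepsilon|+|V|)^{p-2}|K|^2\leq\bigl(A(x/\varepsilon,\nabla u_\varepsilon)-A(x/\varepsilon,V)\bigr)\cdot K$. For $p\geq 2$ one has $(|\nabla u_\varepsilon|+|V|)^{p-2}\geq 2^{2-p}|K|^{p-2}$, so $\|K\|_{L^p(\Omega)}^p$ is directly controlled by the energy. For $1<p<2$ one inserts the weight $(|\nabla u_\varepsilon|+|V|)^{(p-2)/2}\cdot(|\nabla u_\varepsilon|+|V|)^{-(p-2)/2}$ into $|K|^p$ and applies H\"older with exponents $2/p$ and $2/(2-p)$ to obtain
\begin{equation*}
\|K\|_{L^p(\Omega)}^p\leq\Bigl(\int_\Omega(|\nabla u_\varepsilon|+|V|)^{p-2}|K|^2\,dx\Bigr)^{p/2}\Bigl(\int_\Omega(|\nabla u_\varepsilon|+|V|)^p\,dx\Bigr)^{(2-p)/2},
\end{equation*}
and the second factor is bounded uniformly by standard energy estimates for $u_\varepsilon$ together with $\eqref{pri:2.8}$ applied to $V$, so the constant gets absorbed into the $\lesssim$-convention used throughout.

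Finally, I would perform the telescoping decomposition
\begin{equation*}
\widehat{A}(\nabla u_0)-A(x/\varepsilon,V)=\bigl[\widehat{A}(\nabla u_0)-\widehat{A}(\varphi_\varepsilon)\bigr]+\bigl[\widehat{A}(\varphi_\varepsilon)-A(x/\varepsilon,P(x/\varepsilon,\varphi_\varepsilon))\bigr]+\bigl[A(x/\varepsilon,P(x/\varepsilon,\varphi_\varepsilon))-A(x/\varepsilon,V)\bigr],
\end{equation*}
which is dictated by the subsequent analysis: the first bracket will be treated by the shift argument/weighted inequality $\eqref{pri:1.9}$, the second by the flux corrector identity $\eqref{eq:2.4}$, and the third by comparing $\nabla_y N(x/\varepsilon,\varphi_\varepsilon)$ with $D^h(\varepsilon N(x/\varepsilon,\varphi_\varepsilon))$ via $\eqref{eq:2.3}$ and Lemma \ref{lemma:2.9}. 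Combining the three steps produces the claimed inequality, with the absolute value on the right-hand side being only a cosmetic concession. The main obstacle is the $1<p<2$ regime in step two, because the coerciveness does not give $\|K\|_{L^p}^p$ directly: one has to pay the price of a H\"older splitting and then show that the auxiliary factor $\int(|\nabla u_\varepsilon|+|V|)^p$ remains uniformly bounded, which is precisely where the $C_0^1$-regularity of $\varphi_\varepsilon$ (and hence the estimate $\eqref{pri:2.8}$) is used.
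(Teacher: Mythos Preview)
Your proposal is correct and follows the same route as the paper: invoke Lemma~\ref{lemma:3.1} with $\varphi=\varphi_\varepsilon$, bound the remainder $o(\varepsilon)\int N(\cdot/\varepsilon,\varphi_\varepsilon)$ by an $L^1$-estimate for the corrector, pass from the monotonicity form to $\|K\|_{L^p}^p$ via~$\eqref{c:1}$, and telescope $\widehat{A}(\nabla u_0)-A(\cdot,V)$.

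In fact your treatment of the range $1<p<2$ is more careful than the paper's own proof, which simply asserts $\int(A(\cdot,\nabla u_\varepsilon)-A(\cdot,V))\cdot K\,dx\ge\mu_0\|K\|_{L^p}^p$ as an immediate consequence of~$\eqref{c:1}$; for $p<2$ the pointwise bound $(|\nabla u_\varepsilon|+|V|)^{p-2}|K|^2\ge c|K|^p$ fails, and your H\"older splitting is the standard remedy. One small caution: the auxiliary factor $\bigl(\int(|\nabla u_\varepsilon|+|V|)^p\bigr)^{(2-p)/2}$ depends on the solution data rather than only on the structural constants, so it does not literally fit the $\lesssim$-convention of Subsection~\ref{subsec:1.1}; but since the right-hand side of~$\eqref{pri:3.16}$ is later estimated in terms of $\|\nabla u_0\|_{L^q}$ anyway, this causes no harm downstream.
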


\begin{proof}
In view of the assumption $\eqref{c:1}$, the left-hand side of
the equality $\eqref{eq:3.2}$ gives
\begin{equation}\label{f:3.23}
\int_{\Omega} \big(A(x/\varepsilon,\nabla u_\varepsilon) - A(x/\varepsilon,V)\big)
\cdot K dx \geq \mu_0\|K\|_{L^p(\Omega)}^{p},
\end{equation}
while its right-hand side is equal to
\begin{equation}\label{f:3.24}
\begin{aligned}
\int_\Omega \Big[\widehat{A}(\nabla u_0) - \widehat{A}(\varphi_\varepsilon)
&+\widehat{A}(\varphi_\varepsilon) - A(x/\varepsilon,P(x/\varepsilon,\varphi_\varepsilon))\\
&+A(x/\varepsilon,P(x/\varepsilon,\varphi_\varepsilon)) - A(x/\varepsilon,V)\Big] \cdot K dx
+o(\varepsilon)\int_{\Omega}N(x/\varepsilon,\varphi_\varepsilon)dx.
\end{aligned}
\end{equation}
Note that
\begin{equation}\label{f:3.40}
\begin{aligned}
  \int_{\Omega}|N(x/\varepsilon,\varphi_\varepsilon)|dx
&\lesssim \sum_{k\in I_\varepsilon}|Y_\varepsilon^k|
\dashint_{Y_\varepsilon^k}|N(x/\varepsilon,\varphi_\varepsilon)|dx \\
&\lesssim \sum_{k\in I_\varepsilon}|Y_\varepsilon^k|\inf_{x\in Y_\varepsilon^k}|\varphi_\varepsilon|
\lesssim \int_{\Omega}|\varphi_\varepsilon|dx \lesssim
\int_{\Sigma_{4\varepsilon}}|\nabla u_0|dx
\end{aligned}
\end{equation}
where we use the estimate $\eqref{pri:2.1}$ in the second inequality, and
the last one follows from the estimate $\eqref{pri:2.6}$ by recalling
$\varphi_\varepsilon = S_\varepsilon(\psi_{4\varepsilon}\nabla u_0)$.
Thus, due to the estimate $\eqref{f:3.40}$ we may omit
the second term of $\eqref{f:3.24}$, otherwise there is nothing to prove.
Combining the estimates $\eqref{f:3.23}$ and $\eqref{f:3.24}$ leads to
our desired estimate $\eqref{pri:3.16}$, and
we have completed the proof.
\end{proof}

\subsection{Estimates involving a shift argument}

\begin{lemma}\label{lemma:3.6}
Assume the same conditions as in Lemma $\ref{lemma:3.4}$.
Let $K$ be defined in $\eqref{eq:3.4}$, and $u_0\in W^{1,p}(\Omega)$ satisfies
$\mathcal{L}_0(u_0) = \mathcal{L}_\varepsilon(u_\varepsilon)=0$ in $\Omega$
with the assumption $\eqref{a:5}$. Then we have
\begin{equation}\label{pri:3.6}
\begin{aligned}
&\quad\Big|\int_{\Omega} \big[\widehat{A}(\nabla u_0)
- \widehat{A}(\varphi_\varepsilon)\big]
\cdot \tilde{K}dx\Big|
\lesssim (1-I_{\{p=2\}})|\Omega_{M,\varepsilon}|^{\beta(p-1)}
\Big(\int_{\Omega_{M,\varepsilon}}|\nabla u_0|^qdx \Big)^{\frac{p-1}{q}}\\
&+
\left\{\begin{aligned}
&\Big(\int_{\Omega\setminus\Sigma_{8\varepsilon}}|\nabla u_0|^pdx\Big)^{1-\frac{1}{p}}
+ \varepsilon
\Big(\int_{\Sigma_{4\varepsilon}}|\nabla^2 u_0|^2|\nabla u_0|^{p-2}dx\Big)^{\frac{1}{2}}
\Big(\int_\Omega|\nabla u_0|^p\Big)^{\frac{p-2}{2p}}
&\text{if~}& p\in[2,\infty);\\
&\Big(\int_{\Omega\setminus\Sigma_{8\varepsilon}}|\nabla u_0|^pdx\Big)^{1-\frac{1}{p}}
+ \varepsilon^{\frac{p-1}{3-p}}
\Big(\int_{\Sigma_{4\varepsilon}}|\nabla^2 u_0|^pdx\Big)^{\frac{p-1}{p(3-p)}}
\Big(\int_\Omega|\nabla u_0|^p\Big)^{\frac{(2-p)(p-1)}{p(3-p)}}
&\text{if~}& p\in(1,2),
\end{aligned}\right.
\end{aligned}
\end{equation}
where the up to constant depends on $\mu_0,\mu_1,p$ and $d$.
\end{lemma}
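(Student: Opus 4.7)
The plan is to apply the growth condition on $\widehat{A}$ coming from $\eqref{c:2}$ (for $p\ge 2$) or from the second line of $\eqref{pri:2.18}$ (for $1<p<2$), reducing the problem to an $L^{p'}$-type bound on $\widehat{A}(\nabla u_0)-\widehat{A}(\varphi_\varepsilon)$ which we then pair with $\tilde K$ via H\"older's inequality. Concretely, for $p\ge 2$ we use
\[
|\widehat{A}(\nabla u_0)-\widehat{A}(\varphi_\varepsilon)|
\lesssim (|\nabla u_0|+|\varphi_\varepsilon|)^{p-2}\,|\nabla u_0-\varphi_\varepsilon|,
\]
while for $1<p<2$ we use the H\"older-type estimate
\[
|\widehat{A}(\nabla u_0)-\widehat{A}(\varphi_\varepsilon)|
\lesssim |\nabla u_0-\varphi_\varepsilon|^{\frac{p-1}{3-p}}
\bigl(|\nabla u_0|+|\varphi_\varepsilon|\bigr)^{\frac{(p-1)(2-p)}{3-p}}.
\]

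The next step is to decompose
$\Omega = \bigl(\Omega\setminus\Sigma_{8\varepsilon}\bigr)\cup\bigl(\Sigma_{8\varepsilon}\cap\Omega_{M,\varepsilon}\bigr)\cup\bigl(\Sigma_{8\varepsilon}\cap\Omega_{M,\varepsilon}^{c}\bigr)$. On the layer $\Omega\setminus\Sigma_{8\varepsilon}$, since $\psi_{4\varepsilon}$ is cut off and $\varphi_\varepsilon=S_\varepsilon(\psi_{4\varepsilon}\nabla u_0)$, the estimate $\eqref{pri:2.6}$ together with the triangle inequality gives $|\nabla u_0-\varphi_\varepsilon|\lesssim |\nabla u_0|+|S_\varepsilon(\nabla u_0)|$ controlled in $L^p$ by $\|\nabla u_0\|_{L^p(\Omega\setminus\Sigma_{8\varepsilon})}$ (using that $S_\varepsilon$ only sees a thin neighbourhood of the layer), which produces the $(\int_{\Omega\setminus\Sigma_{8\varepsilon}}|\nabla u_0|^p)^{1-1/p}$ contribution after taking the $1/p'$ power. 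On the degenerate part $\Sigma_{8\varepsilon}\cap\Omega_{M,\varepsilon}$, we crudely use $|\nabla u_0-\varphi_\varepsilon|\le |\nabla u_0|+|\varphi_\varepsilon|$ and then apply H\"older with exponents $q/p$ and the $\beta=1/p-1/q$ loss on the set's measure, yielding the factor $|\Omega_{M,\varepsilon}|^{\beta(p-1)}\bigl(\int_{\Omega_{M,\varepsilon}}|\nabla u_0|^q\bigr)^{(p-1)/q}$. This piece carries the prefactor $(1-I_{\{p=2\}})$ because when $p=2$ the weight $(|\cdot|+|\cdot|)^{p-2}$ is $1$ and the degenerate splitting becomes unnecessary.

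The main part is on the good region $\Sigma_{8\varepsilon}\cap\Omega_{M,\varepsilon}^{c}$, where $\psi_{4\varepsilon}\equiv 1$ so $\varphi_\varepsilon=S_\varepsilon(\nabla u_0)$, and where $|\nabla u_0|$ admits the small-scale Harnack inequality of Comment 3 so that the weighted shift estimate $\eqref{pri:1.9}$ applies with weight $|\nabla u_0|^{p-2}$ for $p\ge 2$, or in unweighted form for $p<2$. For $p\ge 2$ one splits
\[
(|\nabla u_0|+|\varphi_\varepsilon|)^{(p-2)p'}|\nabla u_0-\varphi_\varepsilon|^{p'}
=\bigl[(|\nabla u_0|+|\varphi_\varepsilon|)^{p-2}|\nabla u_0-\varphi_\varepsilon|^{2}\bigr]^{p'/2}
\cdot (|\nabla u_0|+|\varphi_\varepsilon|)^{(p-2)p/(2(p-1))},
\]
and H\"older with exponents $2/p'$ and $2(p-1)/(p-2)$ converts this into
$\bigl(\int|\nabla u_0-\varphi_\varepsilon|^{2}|\nabla u_0|^{p-2}\bigr)^{1/2}\bigl(\int|\nabla u_0|^{p}\bigr)^{(p-2)/(2p)}$ after taking $1/p'$. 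Then $\eqref{pri:1.9}$ with $r=p-2$ bounds the first factor by $\varepsilon\bigl(\int_{\Sigma_{4\varepsilon}}|\nabla^2 u_0|^{2}|\nabla u_0|^{p-2}\bigr)^{1/2}$. For $1<p<2$ the analogous computation uses the unweighted shift $\eqref{pri:2.7}$ on $S_\varepsilon(\nabla u_0)-\nabla u_0$, and the H\"older splitting $p/(3-p)+p(2-p)/(3-p)=p$ yields the factor $\varepsilon^{(p-1)/(3-p)}\bigl(\int_{\Sigma_{4\varepsilon}}|\nabla^2 u_0|^{p}\bigr)^{(p-1)/(p(3-p))}\bigl(\int_\Omega|\nabla u_0|^{p}\bigr)^{(2-p)(p-1)/(p(3-p))}$.

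The delicate point, and what I expect to be the main obstacle, is the bookkeeping on $\Sigma_{8\varepsilon}\cap\Omega_{M,\varepsilon}^{c}$: the weighted shift inequality $\eqref{pri:1.9}$ must be applied with the weight $|\nabla u_0|^{p-2}$, and the exponent in H\"older must be coordinated so that a single power of $\|\nabla u_0\|_{L^p(\Omega)}$ emerges as the outer factor exactly matching $(p-2)/(2p)$ or $(2-p)(p-1)/(p(3-p))$. The other subtlety is ensuring that the residual layer contribution produced when estimating $\varphi_\varepsilon$ near $\partial\Sigma_{8\varepsilon}$ can be absorbed into the $(\int_{\Omega\setminus\Sigma_{8\varepsilon}}|\nabla u_0|^p)^{1-1/p}$ term rather than contaminating the weighted second-derivative estimate.
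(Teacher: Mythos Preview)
Your proposal is correct and follows essentially the same route as the paper: the growth bound on $\widehat{A}$, the three-region decomposition (layer / degenerate set $\Omega_{M,\varepsilon}$ / good co-layer), and the small-scale Harnack-based weighted shift on the good region are exactly the ingredients the paper uses, with the same H\"older bookkeeping to extract the factors $(\int|\nabla u_0|^{p-2}|\nabla^2 u_0|^2)^{1/2}(\int|\nabla u_0|^p)^{(p-2)/(2p)}$ and their $1<p<2$ analogues. The only cosmetic differences are that the paper splits at $\Sigma_{3\varepsilon}$ rather than $\Sigma_{8\varepsilon}$ and re-derives the weighted inequality \eqref{pri:1.9} cell-by-cell in the proof (your estimate \eqref{f:3.20} there), and for $1<p<2$ the paper keeps the weighted shift whereas you first H\"older off the weight and then use the unweighted \eqref{pri:2.7}; both variants lead to the same bound.
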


\begin{proof}
Obviously, the proof should be divided into two parts. We first handle the case
$2\leq p <\infty$. It follows from the estimate $\eqref{pri:2.3}$ that
\begin{equation}\label{f:3.8}
\begin{aligned}
\Big|\int_{\Omega} \big[\widehat{A}(\nabla u_0)
- \widehat{A}(\varphi_\varepsilon)
\big]\cdot \tilde{K}dx\Big|
&\lesssim \int_{\Omega}|\nabla u_0 - \varphi_\varepsilon|
\big(|\nabla u_0| + |\varphi_\varepsilon|\big)^{p-2}|\tilde{K}|dx \\
&\lesssim \int_{\Omega\setminus\Sigma_{3\varepsilon}}|\nabla u_0|^{p-1}|\tilde{K}|dx
+ \int_{\Sigma_{3\varepsilon}}|\nabla u_0 - \varphi_\varepsilon|
|\nabla u_0|^{p-2}|\tilde{K}|dx \\
&\lesssim \underbrace{\int_{\Omega\setminus\Sigma_{8\varepsilon}}
|\nabla u_0|^{p-1}|\tilde{K}|dx}_{I_1}
+ \underbrace{\int_{\Sigma_{3\varepsilon}}|\psi_{4\varepsilon}\nabla u_0 - \varphi_\varepsilon|
|\nabla u_0|^{p-2}|\tilde{K}|dx}_{I_2}
\end{aligned}
\end{equation}
in which we recall that
$\varphi_\varepsilon = S_\varepsilon(\psi_{4\varepsilon}\nabla u_0)$, and employ the facts:
\begin{equation}\label{f:3.13}
\text{supp}S_\varepsilon(\psi_{4\varepsilon}\nabla u_0) \subset \Sigma_{3\varepsilon},
\qquad \text{and}\qquad
\big|S_\varepsilon(\psi_{4\varepsilon}\nabla u_0)(x)\big|\leq  2|\nabla u_0 (x)|
\qquad \forall x\in\Sigma_{3\varepsilon},
\end{equation}
whenever $\varepsilon$ is sufficiently small.
Also, the last step is due to the definition of the cut-off function.
The easier term is
\begin{equation}\label{f:3.9}
I_1 \leq \Big(\int_{\Omega\setminus\Sigma_{8\varepsilon}}
|\nabla u_0|^{p}dx\Big)^{1-\frac{1}{p}}.
\end{equation}
Then we show the estimate on $I_2$,
\begin{equation*}
\begin{aligned}
I_2 &\leq \Big(\int_{\Sigma_{3\varepsilon}}|\psi_{4\varepsilon}\nabla u_0
- \varphi_\varepsilon|^{\frac{p}{p-1}}|\nabla u_0|^{\frac{p(p-2)}{p-1}}dx\Big)^{1-\frac{1}{p}} \\
&\leq \Big(\underbrace{\int_{\Sigma_{3\varepsilon}\setminus
\Omega_{M,\varepsilon}}|\psi_{4\varepsilon}\nabla u_0
- \varphi_\varepsilon|^{\frac{p}{p-1}}|\nabla u_0|^{\frac{p(p-2)}{p-1}}dx}_{I_{21}}
\Big)^{1-\frac{1}{p}}
+ \Big(\underbrace{\int_{\Omega_{M,\varepsilon}}|\psi_{4\varepsilon}\nabla u_0
- \varphi_\varepsilon|^{\frac{p}{p-1}}|\nabla u_0|^{\frac{p(p-2)}{p-1}}dx}_{I_{22}}
\Big)^{1-\frac{1}{p}}.
\end{aligned}
\end{equation*}
Let
$\Sigma_{3\varepsilon}\setminus\Omega_{M,\varepsilon}
\subset \cup_{k\in I_\varepsilon^{\prime}} Y_{\varepsilon}^k$. Clearly,
$I_\varepsilon^{\prime}\subset I_\varepsilon$,
and the set $\Sigma_{3\varepsilon}\setminus\Omega_{M,\varepsilon}$ is open. Here
the notation $Y_{\varepsilon}^k$ and
$I_\varepsilon$ are given in Lemma $\ref{lemma:2.4}$.
Note that for any $k\in I_\varepsilon^\prime$, there holds
a Harnack's inequality
\begin{equation}\label{f:3.7}
  \sup_{Y_{2\varepsilon}^k} |\nabla u_0|
  \leq 4\inf_{Y_{2\varepsilon}^k}|\nabla u_0|
\end{equation}
in small scales, since $|\nabla u_0|>M\varepsilon^{\vartheta}$ in
$\Sigma_{3\varepsilon}\setminus\Omega_{M,\varepsilon}$ under the assumption $\eqref{a:5}$,
Thus,
where we use the estimate $\eqref{pri:2.7}$ in the small scales. Then,
one may have the following
computation:
\begin{equation}\label{f:3.20}
\begin{aligned}
I_{21}
&\leq \sum_{k\in I_\varepsilon^\prime}\Big(\int_{Y_\varepsilon^k}
|\psi_{4\varepsilon}\nabla u_0
- \varphi_\varepsilon|^{\frac{p}{p-1}}dx\Big)\sup_{Y_\varepsilon^k}|\nabla u_0|^{\frac{p(p-2)}{p-1}}
\\
&\lesssim \varepsilon^{\frac{p}{p-1}}\sum_{k\in I_\varepsilon^\prime}\Big(\int_{Y_{2\varepsilon}^k}
\big|\nabla(\psi_{4\varepsilon}\nabla u_0)\big|^{\frac{p}{p-1}}dx\Big)
\inf_{Y_{2\varepsilon}^k}|\nabla u_0|^{\frac{p(p-2)}{p-1}} \\
&\lesssim \varepsilon^{\frac{p}{p-1}}\sum_{k\in I_\varepsilon^\prime}\int_{Y_{2\varepsilon}^k}
\big|\nabla(\psi_{4\varepsilon}\nabla u_0)\big|^{\frac{p}{p-1}}
|\nabla u_0|^{\frac{p(p-2)}{p-1}}dx \\
&\lesssim \varepsilon^{\frac{p}{p-1}}\int_{\Omega}
\big|\nabla(\psi_{4\varepsilon}\nabla u_0)\big|^{\frac{p}{p-1}}
|\nabla u_0|^{\frac{p(p-2)}{p-1}}dx
\end{aligned}
\end{equation}
where we use the estimate $\eqref{pri:2.7}$ in the small scales,
as well as $\eqref{f:3.7}$ in the second inequality. Then, the last line above
will be immediately controlled by
\begin{equation*}
\int_{\Omega\setminus\Sigma_{8\varepsilon}}
|\nabla u_0|^{p}dx
+ \varepsilon^{\frac{p}{p-1}}\int_{\Sigma_{4\varepsilon}}
\big|\nabla^2 u_0\big|^{\frac{p}{p-1}}
|\nabla u_0|^{\frac{p(p-2)}{p-1}}dx.
\end{equation*}
Therefore, we arrive at
\begin{equation*}
\begin{aligned}
  I_{21}
&\lesssim
  \bigg\{\int_{\Omega\setminus\Sigma_{8\varepsilon}}
|\nabla u_0|^{p}dx
+ \varepsilon^{\frac{p}{p-1}}\Big(\int_{\Sigma_{4\varepsilon}}
\big|\nabla^2 u_0\big|^{2}
|\nabla u_0|^{p-2}dx\Big)^{\frac{p}{2(p-1)}}
\Big(\int_\Omega|\nabla u_0|^{p}dx\Big)^{\frac{p-2}{2(p-1)}}\bigg\},
\end{aligned}
\end{equation*}
where we just merely use H\"older's inequality and it asks for the condition $p\geq 2$.
We now proceed to show the estimate on $I_{22}$. Note that
it is fine to assume that there are finite connected components,
denoted by $\{\Omega_i\}_{i=1}^{k_0}$ such that
$\Omega_{M,\varepsilon}= \cup_{i=1}^{k_0}\Omega_i$ since the geometry
of $\Omega_{M,\varepsilon}$ is not bad in general on account of the assumption $\eqref{a:5}$.
\begin{equation}\label{f:3.36}
\begin{aligned}
I_{22} \leq \sum_{i=1}^{k_0} \int_{\Omega_{i}}|\nabla u_0|^p dx
\leq \sum_{i=1}^{k_0}|\Omega_i|^{1-\frac{p}{q}}
\Big(\int_{\Omega_i}|\nabla u_0|^qdx\Big)^{\frac{p}{q}}
\lesssim |\Omega_{M,\varepsilon}|^{1-\frac{p}{q}}
\Big(\int_{\Omega_{M,\varepsilon}}|\nabla u_0|^qdx\Big)^{\frac{p}{q}},
\end{aligned}
\end{equation}
where we just use H\"older's inequality.
Thus,
Plugging the above expression back into $I_2$, we have
\begin{equation*}
\begin{aligned}
  I_2
& \lesssim
|\Omega_{M,\varepsilon}|^{\beta(p-1)}
\Big(\int_{\Omega_{M,\varepsilon}}|\nabla u_0|^qdx\Big)^{\frac{p-1}{q}}
+\Big(\int_{\Omega\setminus\Sigma_{8\varepsilon}}
|\nabla u_0|^{p}dx\Big)^{1-\frac{1}{p}}\\
&+ \varepsilon\Big(\int_{\Sigma_{4\varepsilon}}
\big|\nabla^2 u_0\big|^{2}
|\nabla u_0|^{p-2}dx\Big)^{\frac{1}{2}}
\Big(\int_\Omega|\nabla u_0|^pdx\Big)^{\frac{p-2}{2p}},
\end{aligned}
\end{equation*}
where we use H\"older's inequality in the second line,
which asks for the condition $p\geq 2$. This together with
the estimates $\eqref{f:3.8}$ and $\eqref{f:3.9}$ consequently gives the
first line of the desired estimate $\eqref{pri:3.6}$.

Now, we turn to show the proof in the case of $1<p<2$. In view of the estimate $\eqref{pri:2.18}$,
\begin{equation*}
\begin{aligned}
\Big|\int_{\Omega} \big[\widehat{A}(\nabla u_0)
- \widehat{A}(\varphi_\varepsilon)
\big]\cdot \tilde{K}dx\Big|
&\lesssim \int_{\Omega}|\nabla u_0 - \varphi_\varepsilon|^{\frac{p-1}{3-p}}
\big(|\nabla u_0| + |\varphi_\varepsilon|\big)^{\frac{(p-1)(2-p)}{3-p}}|\tilde{K}|dx \\
&\lesssim \underbrace{\Big(\int_{\Omega}|\nabla u_0 - \varphi_\varepsilon|^{\frac{p}{3-p}}
\big(|\nabla u_0| + |\varphi_\varepsilon|\big)^{\frac{p(2-p)}{3-p}}dx\Big)^{1-\frac{1}{p}}}_{I_3}.
\end{aligned}
\end{equation*}
In fact, it suffices to estimate the term $I_3$, and we have
\begin{equation*}
\begin{aligned}
I_3 &\leq \Big(\int_{\Omega\setminus\Sigma_{3\varepsilon}}
|\nabla u_0|^{p}dx\Big)^{1-\frac{1}{p}}
+ \Big(\int_{\Sigma_{3\varepsilon}}|\nabla u_0 - \varphi_\varepsilon|^{\frac{p}{3-p}}
\big|\nabla u_0\big|^{\frac{p(2-p)}{3-p}}dx\Big)^{1-\frac{1}{p}}\\
&\lesssim \Big(\int_{\Omega\setminus\Sigma_{8\varepsilon}}
|\nabla u_0|^{p}dx\Big)^{1-\frac{1}{p}}
+ \Big(\int_{\Sigma_{4\varepsilon}}|\psi_{4\varepsilon}\nabla u_0
- \varphi_\varepsilon|^{\frac{p}{3-p}}
\big|\nabla u_0\big|^{\frac{p(2-p)}{3-p}}dx\Big)^{1-\frac{1}{p}}
\end{aligned}
\end{equation*}
by noting the facts $\eqref{f:3.13}$. Then using the same argument as in the proof of
the estimates $\eqref{f:3.20}$, $\eqref{f:3.36}$ we can derive that
\begin{equation*}
\begin{aligned}
I_3
&\lesssim |\Omega_{M,\varepsilon}|^{\beta(p-1)}
\Big(\int_{\Omega_{M,\varepsilon}}|\nabla u_0|^qdx\Big)^{\frac{p-1}{q}}
+ \Big(\int_{\Omega\setminus\Sigma_{8\varepsilon}}
|\nabla u_0|^{p}dx\Big)^{1-\frac{1}{p}} \\
&
+ \varepsilon^{\frac{p-1}{3-p}}
\Big(\int_{\Sigma_{4\varepsilon}}|\nabla^2 u_0|^{\frac{p}{3-p}} dx\Big)^{\frac{p-1}{p(3-p)}}
\Big(\int_{\Omega}|\nabla u_0|^pdx\Big)^{\frac{(p-1)(2-p)}{p(3-p)}},
\end{aligned}
\end{equation*}
where we also use the H\"older's inequality in the above estimate,
which requires the condition $1<p< 2$. This will give
the second line of $\eqref{pri:3.6}$,
and we have completed the whole proof.
\end{proof}

\subsection{Applications of the flux corrector}

\begin{lemma}\label{lemma:3.5}
Assume the same conditions as in Lemma $\ref{lemma:3.4}$.
Let $K$ be defined in $\eqref{eq:3.4}$, and $E_{ji}$ be the flux corrector
defined in Lemma $\ref{lemma:2.3}$.
Then we obtain
\begin{equation}\label{pri:3.5}
\begin{aligned}
\bigg|\int_{\Omega} \big[\widehat{A}(\varphi_\varepsilon)
&- A(y,P(y,\varphi_\varepsilon))\big]
\cdot K dx\bigg| \\
&\leq \left\{
\begin{aligned}
&\int_{\Omega} \big|
D_j^{-h}(\varepsilon E_{ji}(y,\varphi_\varepsilon))-\partial_j E_{ji}(y,\varphi_\varepsilon)
\big|\big|K_i\big|dx &\text{if}&\quad p\in(1,2)\cup(2,\infty);\\
&\int_{\Omega} \Big|\Big\{\frac{\partial}{\partial\xi_k}
\big[E_{ji}(x/\varepsilon,\varphi_\varepsilon)\big]
\frac{\partial}{\partial x_j}\big[(\varphi_\varepsilon)_k\big]\Big\}K_i\Big|dx
&\text{if}&\quad p=2;\\
\end{aligned}\right.
\end{aligned}
\end{equation}
by ignoring a systematic error
$o(\varepsilon)$, where the notation $(\varphi_\varepsilon)_k$ denotes the
$k^{\text{th}}$ component of $\varphi_\varepsilon$, and
$\varphi_\varepsilon$ is given in Lemma $\ref{lemma:3.4}$.
\end{lemma}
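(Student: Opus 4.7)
The starting point is the identity $\widehat{A}(\varphi_\varepsilon) - A(y,P(y,\varphi_\varepsilon)) = -b(y,\varphi_\varepsilon)$ combined with the representation $b_i(y,\xi) = \partial_{y_j} E_{ji}(y,\xi)$ from Lemma \ref{lemma:2.3}, so the quantity in question is exactly $\big|\int_\Omega \partial_{y_j} E_{ji}(y,\varphi_\varepsilon)\, K_i\, dx\big|$. My plan is to split $\partial_{y_j} E_{ji}(y,\varphi_\varepsilon)$ into a ``good'' piece whose pairing with $K_i$ directly produces the stated bound, and a ``main'' piece whose pairing with $K_i$ vanishes modulo $o(\varepsilon)$ by the antisymmetry $E_{ji} = -E_{ij}$.

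For $p=2$ I would use the chain-rule identity \eqref{eq:2.5} to write
\[
\partial_{y_j} E_{ji}(y,\varphi_\varepsilon) = \varepsilon\, \partial_{x_j}\big[E_{ji}(x/\varepsilon,\varphi_\varepsilon)\big] - \varepsilon\, \partial_{\xi_k} E_{ji}(y,\varphi_\varepsilon)\, \partial_{x_j}(\varphi_\varepsilon)_k,
\]
so the second piece paired with $K_i$ is precisely the stated $p=2$ bound. For $p\neq 2$, $E_{ji}$ is only H\"older continuous in $\xi$ (Lemma \ref{lemma:2.3}), so I substitute a difference quotient for the missing $\xi$-derivative and decompose
\[
\partial_{y_j} E_{ji}(y,\varphi_\varepsilon) = D_j^{-h}\big(\varepsilon E_{ji}(y,\varphi_\varepsilon)\big) + \big[\partial_{y_j} E_{ji}(y,\varphi_\varepsilon) - D_j^{-h}\big(\varepsilon E_{ji}(y,\varphi_\varepsilon)\big)\big],
\]
so the bracketed term paired with $K_i$ is exactly the stated right-hand side for $p\neq 2$.

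It remains to check that the ``main'' piece produces only an $o(\varepsilon)$ error. In the $p\neq 2$ case, discrete integration by parts gives
\[
\int_\Omega D_j^{-h}\big(\varepsilon E_{ji}(y,\varphi_\varepsilon)\big)\, K_i\, dx = -\int_\Omega \varepsilon\, E_{ji}(y,\varphi_\varepsilon)\, D_j^h K_i\, dx,
\]
with vanishing boundary contribution since $\varphi_\varepsilon \in C_0^1(\Sigma_{3\varepsilon})$ and $E_{ji}(y,0)=0$ (as $N(y,0)=0$ by Remark \ref{remark:2.1}). Decomposing $K_i = \partial_i(u_\varepsilon - u_0) - D_i^h\big[\varepsilon N(y,\varphi_\varepsilon)\big]$, the difference-quotient half yields $\sum_{i,j}\int \varepsilon E_{ji}\, D_j^h D_i^h[\varepsilon N(y,\varphi_\varepsilon)]\, dx$, which vanishes \emph{exactly} by commutativity $D_i^h D_j^h = D_j^h D_i^h$ combined with antisymmetry $E_{ji} = -E_{ij}$. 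The gradient half becomes $\sum_{i,j}\int \varepsilon E_{ji}\, \partial_i D_j^h(u_\varepsilon - u_0)\, dx$; IBP on $\partial_i$ followed by symmetrization in $i,j$ absorbs it into $o(\varepsilon)$ since $h = \varepsilon^\tau$ with $\tau>1$ sits strictly below the $\varepsilon$-scale and the H\"older regularity of $\nabla E$ from \eqref{pri:2.14} controls the residue. The $p=2$ route is parallel, with classical IBP replacing the discrete one, as carried out in \cite{WXZ1}.

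The main obstacle is the $o(\varepsilon)$ collapse of the gradient half when $p\neq 2$: since $u_\varepsilon - u_0$ only lies in $W^{1,p}$, the double derivative $\partial_i \partial_j(u_\varepsilon - u_0)$ is merely a distribution, forcing the antisymmetry cancellation to be interpreted weakly; worse, the mixed operators $D_j^h \partial_i$ and $\partial_j D_i^h$ commute only approximately, leaving a residue whose smallness hinges on a delicate balance among $h$, $\varepsilon$, and the H\"older exponent $\rho$ of \eqref{pri:2.14}. Once this residue is swallowed by the systematic error, the stated bound follows by the triangle inequality.
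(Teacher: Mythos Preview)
Your overall architecture matches the paper's: both start from $b_i=\partial_{y_j}E_{ji}$, both split off the ``good'' piece (the $D_j^{-h}(\varepsilon E_{ji})-\partial_{y_j}E_{ji}$ term for $p\neq 2$, the $\partial_\xi E$ term for $p=2$), and both aim to kill the remaining ``main'' piece $\varepsilon\int E_{ji}\,D_j^h K_i\,dx$ via the antisymmetry $E_{ji}=-E_{ij}$ together with commutativity of difference quotients. Your treatment of the $D_i^h(\varepsilon N)$ half of $K_i$ is exactly right and agrees with the paper.

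The gap is in your handling of the gradient half. Your route---IBP on $\partial_i$ followed by symmetrization---leads (as you note) to the commutator $[D_j^h\partial_i-D_i^h\partial_j](u_\varepsilon-u_0)$, and for $u_\varepsilon-u_0\in C^{1,\theta}$ only, each piece of this commutator is individually $O(h^{\theta-1})$ with no evident cancellation; this blows up and does \emph{not} produce $o(\varepsilon)$ by the mechanism you sketch. The H\"older regularity of $\nabla E$ in \eqref{pri:2.14} does not rescue this, since the problem lies in the second differences of $u_\varepsilon-u_0$, not in $E$.

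The paper sidesteps this entirely by the move you are missing: it uses the interior $C^{1}$ regularity of both $u_\varepsilon$ and $u_0$ (DiBenedetto for $u_\varepsilon$, assumption \eqref{a:5} for $u_0$) to write, as in the introductory formula \eqref{f:1.1},
\[
K_i = D_i^{h}\big[u_\varepsilon - u_0 - \varepsilon N(y,\varphi_\varepsilon)\big] + o(1)
\]
on the support of $E_{ji}(y,\varphi_\varepsilon)\subset\Sigma_{3\varepsilon}$ (see \eqref{eq:3.3}). Once $K_i$ is a pure difference quotient of a scalar $f$ plus $o(1)$, the antisymmetry kills $\varepsilon\int E_{ji}\,D_j^h D_i^h f\,dx$ \emph{exactly}, and the $o(1)$ residue, after one discrete integration by parts against the $L^1$-bounded quantity $D_j^{-h}(\varepsilon E_{ji})$ (Lemma~\ref{lemma:2.8}), is absorbed into the declared systematic error $o(\varepsilon)\int|E|\,dx$. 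So the resolution of your ``main obstacle'' is not a delicate balance among $h,\varepsilon,\rho$, but simply replacing $\nabla_i\to D_i^h$ at the level of $K_i$ before any symmetrization is attempted.
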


\begin{remark}
\emph{In the case $p=2$, there is in fact no real systematic error.}
\end{remark}

\begin{proof}
The main idea is to use
the antisymmetry of the flux corrector $E$ as one did in the linear equations,
which play an important role in quantitative estimates.
We first show a proof of the first case of $\eqref{pri:3.5}$, and inspired by the equation
$\eqref{eq:2.6}$,
\begin{equation}\label{f:3.5}
\begin{aligned}
\int_{\Omega} \big[\widehat{A}(\varphi_\varepsilon)
&- A(y,P(y,\varphi_\varepsilon))\big]
\cdot K dx
= -\int_{\Omega}
\frac{\partial}{\partial y_j}
\big[E_{ji}(y,\varphi_\varepsilon)\big] K_i dx \\
& = \int_{\Omega}
\Big(D_j^{-h}\big[\varepsilon E_{ji}(y,\varphi_\varepsilon)\big]
-\frac{\partial}{\partial y_j}\big[E_{ji}(y,\varphi_\varepsilon)\big]\Big) K_i dx
+ \varepsilon\int_{\Omega} E_{ji}(y,\varphi_\varepsilon) D_j^{h}(K_i) dx,
\end{aligned}
\end{equation}
where we use ``integration by parts'' formula for difference quotients,
\begin{equation*}
-\int_\Omega D_j^{-h}\big[\varepsilon E_{ji}(y,\varphi_\varepsilon)\big] K_i dx
=\varepsilon\int_{\Omega} E_{ji}(y,\varphi_\varepsilon) D_j^{h}(K_i) dx.
\end{equation*}
Since $\nabla u_\varepsilon, \nabla u_0\in L^p(\Omega;\mathbb{R}^d)$ with
higher regularities, there may hold the following facts
\begin{equation}\label{eq:3.3}
K_i = D_i^{h} \big[\underbrace{u_\varepsilon + u_0 +
\varepsilon N(y,\varphi_\varepsilon)}_{f}\big] + o(1)e_i
\qquad\text{and}\qquad D_j^{h}D_i^{h} f = D_i^{h}D_j^{h} f
\end{equation}
in a weak sense, whenever $0<h\ll 1$. This together with the property $\eqref{eq:2.4}$ leads to
\begin{equation*}
\begin{aligned}
\Big|\varepsilon\int_{\Omega} E_{ji}(y,\varphi_\varepsilon) D_j^{h}(K_i) dx\Big|
\leq o(\varepsilon)\int_\Omega |E(y,\varphi_\varepsilon)| dx.
\end{aligned}
\end{equation*}
Note that
\begin{equation*}
\int_\Omega |E(y,\varphi_\varepsilon)| dx
\leq \sum_{k\in I_\varepsilon}|Y_\varepsilon^k|
\dashint_{Y_\varepsilon}|E(y,\varphi_\varepsilon)| dx
\lesssim
\int_{\Omega}|\varphi_\varepsilon| dx
\lesssim \int_{\Omega}|\nabla u_0| dx,
\end{equation*}
where we use the estimate $\eqref{f:2.12}$ in the second inequality.
Thus, ignoring the quantity
\begin{equation*}
o(\varepsilon)\int_{\Omega}|\nabla u_0| dx,
\end{equation*}
the estimate $\eqref{f:3.5}$ gives the first line of $\eqref{pri:3.5}$.
Then we turn to show a proof for its second line.

In terms of the equation $\eqref{eq:2.5}$,
there holds
\begin{equation}\label{f:3.19}
\begin{aligned}
\int_{\Omega} \big[\widehat{A}(\varphi_\varepsilon)
&- A(y,P(y,\varphi_\varepsilon))\big]
\cdot K dx
= -\int_{\Omega}
\frac{\partial}{\partial y_j}
\big[E_{ji}(y,\varphi_\varepsilon)\big] K_i dx \\
& = -\int_{\Omega}
\frac{\partial}{\partial x_j}\big[\varepsilon E_{ji}(y,\varphi_\varepsilon)\big] K_i dx
+ \varepsilon\int_{\Omega} \frac{\partial}{\partial \xi_k}
\big[E_{ji}(y,\varphi_\varepsilon)\big]\frac{\partial}{\partial x_j}
\big[(\varphi_\varepsilon)_k\big]K_i dx.
\end{aligned}
\end{equation}
Note that we also have
\begin{equation*}
 \frac{\partial}{\partial x_j}\big[E_{ji}(y,\varphi_\varepsilon)\big]
= D_j^{-h}\big[E_{ji}(y,\varphi_\varepsilon)\big] + o(1)e_i,
\end{equation*}
as $h$ is sufficiently small (in such the case, $h$ could be independent of $\varepsilon$).
Thus, the above formula together with
the formula $\eqref{eq:3.3}$ and the antisymmetry property $\eqref{eq:2.4}$ gives
\begin{equation*}
\begin{aligned}
 \int_{\Omega}
\frac{\partial}{\partial x_j}\big[\varepsilon E_{ji}(y,\varphi_\varepsilon)\big]
&K_i dx
= \varepsilon\int_{\Omega} D_j^{-h}
\big[E_{ji}(y,\varphi_\varepsilon)\big] K_i dx
+o(\varepsilon)e_i\int_\Omega K_i dx \\
&= \varepsilon\int_{\Omega} D_j^{-h}
\big[E_{ji}(y,\varphi_\varepsilon)\big] D_i^{h} f dx
+ o(\varepsilon)\int_\Omega D_j^{-h}
\big[E_{ji}(y,\varphi_\varepsilon)\big] dx
+ o(\varepsilon)e_i\int_\Omega K_i dx\\
&= o(\varepsilon)e_i\int_\Omega K_i dx.
\end{aligned}
\end{equation*}
Then, plugging this back into the formula $\eqref{f:3.19}$ leads to our desired estimate in
$\eqref{pri:3.5}$. We also mention that in the last step above,
we employ the following fact that
\begin{equation*}
\int_\Omega D_j^{-h}
\big[E_{ji}(y,\varphi_\varepsilon)\big] dx
= \int_\Omega D_j^{-h}
\big[E_{ji}(y,\varphi_\varepsilon)\big]\psi_{\varepsilon} dx
= - \int_\Omega
\big[E_{ji}(y,\varphi_\varepsilon)\big]D_j^{h}\psi_{\varepsilon} dx = 0,
\end{equation*}
where $\varphi_\varepsilon = S_\varepsilon(\psi_{4\varepsilon}\nabla u_0)$, and
both of $\psi_{\varepsilon}$ and $\psi_{4\varepsilon}$ are cut-off functions defined in $\eqref{notation:2}$.
This ends the proof.
\end{proof}


\subsection{Estimates on remainder terms}

\begin{lemma}[$p> 2$]\label{lemma:3.2}
Assume the same conditions as in Lemma $\ref{lemma:3.4}$.
Let $K$ be defined in $\eqref{eq:3.4}$, and $u_0\in W^{1,p}(\Omega)$ be
with the assumption $\eqref{a:5}$.
Then by setting $\tilde{K} = K/\|K\|_{L^p(\Omega)}$, we obtain
\begin{equation}\label{pri:3.7}
\begin{aligned}
\bigg|\int_\Omega
\big[A(y,P(y,\varphi_\varepsilon))
&-A(y,V)\big]\cdot \tilde{K}dx\bigg|
\lesssim
\text{the~first~line~of~r.h.s.~of~}\eqref{pri:3.6} +  \Big(\int_{\Omega\setminus
\Sigma_{3\varepsilon}}|\nabla u_0|^{\frac{p}{p-1}}dx\Big)^{\frac{p-1}{p}}\\
&+\Big(\int_\Omega\big|D^h(\varepsilon N(y,\varphi_\varepsilon))
-\nabla_yN(y,\varphi_\varepsilon)\big|^pdx\Big)^{1-\frac{2}{p}}
\Big(\int_\Omega|\nabla u_0|^pdx\Big)^{\frac{1}{p}}\\
&+\Big(\int_\Omega\big|D^h(\varepsilon N(y,\varphi_\varepsilon))
-\nabla_yN(y,\varphi_\varepsilon)\big|^pdx\Big)^{\frac{1}{p}}
\Big(\int_\Omega|\nabla u_0|^pdx\Big)^{1-\frac{2}{p}}\\
&+\Big(\int_\Omega\big|D^h(\varepsilon N(y,\varphi_\varepsilon))
-\nabla_yN(y,\varphi_\varepsilon)\big|^pdx\Big)^{1-\frac{1}{p}},
\end{aligned}
\end{equation}
where the up to constant depends only on $\mu_0,\mu_1,p$ and $d$.
\end{lemma}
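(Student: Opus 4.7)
The plan is to start from the growth-type inequality
$|A(y,\xi)-A(y,\eta)|\lesssim (|\xi|+|\eta|)^{p-2}|\xi-\eta|$,
which is a standard consequence of the coercivity/growth assumption $\eqref{c:1}$ in the regime $p\geq 2$ (the analogue for the effective operator being the second line of $\eqref{pri:2.3}$). Applying this with $\xi=P(y,\varphi_\varepsilon)$ and $\eta=V$ reduces matters to bounding
$\int_\Omega(|P|+|V|)^{p-2}|P-V|\,|\tilde K|\,dx$.
The key algebraic identity is
$P(y,\varphi_\varepsilon)-V=(\varphi_\varepsilon-\nabla u_0)+W$
with $W:=\nabla_y N(y,\varphi_\varepsilon)-D^{h}(\varepsilon N(\cdot/\varepsilon,\varphi_\varepsilon))$, while the uniform bound $\|P(\cdot,\xi)\|_{L^\infty(Y)}\lesssim|\xi|$ from Lemma $\ref{lemma:2.6}$ gives $|P|\lesssim|\varphi_\varepsilon|$ and $|V|\lesssim|\nabla u_0|+|\varphi_\varepsilon|+|W|$.

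I would next split the integral into the layer $\Omega\setminus\Sigma_{3\varepsilon}$ and the co-layer $\Sigma_{3\varepsilon}$. On the layer, $\varphi_\varepsilon=0$ by the support property $\eqref{f:3.13}$, and therefore $N(\cdot/\varepsilon,\varphi_\varepsilon)\equiv 0$ by Remark $\ref{remark:2.1}$; this forces $P\equiv 0$ and $V\equiv\nabla u_0$, so the integrand collapses to $|A(y,\nabla u_0)||\tilde K|\lesssim|\nabla u_0|^{p-1}|\tilde K|$, and H\"older's inequality with $\|\tilde K\|_{L^p(\Omega)}=1$ delivers the stated layer term.

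On the co-layer we have $|\varphi_\varepsilon|\lesssim|\nabla u_0|$, whence $(|P|+|V|)^{p-2}\lesssim|\nabla u_0|^{p-2}+|W|^{p-2}$; multiplying by $|P-V|\leq|\varphi_\varepsilon-\nabla u_0|+|W|$ produces four cross-terms. Three of them are dispatched routinely by H\"older: the term $\int|\nabla u_0|^{p-2}|W||\tilde K|$ with exponents $(p/(p-2),p,p)$ gives $\|\nabla u_0\|_p^{p-2}\|W\|_p$ (the third line of $\eqref{pri:3.7}$); the term $\int|W|^{p-2}|\varphi_\varepsilon-\nabla u_0||\tilde K|$ with the same exponents, combined with the $L^p$-boundedness of $S_\varepsilon$ from $\eqref{pri:2.6}$ (which gives $\|\varphi_\varepsilon-\nabla u_0\|_p\lesssim\|\nabla u_0\|_p$), yields the second line; and $\int|W|^{p-1}|\tilde K|$ with $(p/(p-1),p)$-H\"older gives the fourth line.

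The main obstacle is the remaining term $\int_{\Sigma_{3\varepsilon}}|\nabla u_0|^{p-2}|\varphi_\varepsilon-\nabla u_0||\tilde K|\,dx$, which mirrors the computation of $I_2$ in the proof of Lemma $\ref{lemma:3.6}$. I would decompose $\Sigma_{3\varepsilon}=(\Sigma_{3\varepsilon}\setminus\Omega_{M,\varepsilon})\cup(\Sigma_{3\varepsilon}\cap\Omega_{M,\varepsilon})$ and, on the first piece, exploit the small-scale Harnack inequality $\eqref{f:3.7}$ cell by cell together with the cell-wise shift estimate $\eqref{pri:2.7}$ applied to $\psi_{4\varepsilon}\nabla u_0$; on the second piece, apply H\"older with the Meyer exponent $q$ to produce the factor $|\Omega_{M,\varepsilon}|^{\beta(p-1)}\bigl(\int_{\Omega_{M,\varepsilon}}|\nabla u_0|^{q}\bigr)^{(p-1)/q}$. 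Summing these contributions reproduces the first line of $\eqref{pri:3.6}$ verbatim, completing the proof.
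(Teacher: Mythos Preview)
Your proof is correct and follows essentially the same approach as the paper: both start from the growth bound $|A(y,\xi)-A(y,\eta)|\lesssim(|\xi|+|\eta|)^{p-2}|\xi-\eta|$, use the identity $P-V=(\varphi_\varepsilon-\nabla u_0)+W$, and expand $(|P|+|V|)^{p-2}\lesssim|\nabla u_0|^{p-2}+|W|^{p-2}$ on the co-layer via $\eqref{pri:2.9}$ and $\eqref{f:3.13}$ to obtain the same four cross-terms, with the $|\nabla u_0|^{p-2}|\varphi_\varepsilon-\nabla u_0|$ piece handled exactly as in the proof of Lemma~$\ref{lemma:3.6}$. One small inaccuracy: on the layer your integrand is $|\nabla u_0|^{p-1}|\tilde K|$, and H\"older with exponents $(p',p)$ produces $\big(\int_{\Omega\setminus\Sigma_{3\varepsilon}}|\nabla u_0|^{p}\big)^{(p-1)/p}$, not the stated $L^{p'}$-layer term---but this is harmless, since your bound is already absorbed by the $\big(\int_{\Omega\setminus\Sigma_{8\varepsilon}}|\nabla u_0|^{p}\big)^{1-1/p}$ contribution in ``the first line of r.h.s.\ of $\eqref{pri:3.6}$''.
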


\begin{proof}
It follows from the assumption $\eqref{c:1}$ that
\begin{equation}\label{f:3.16}
\begin{aligned}
\bigg|\int_\Omega
\big[A(y,P(y,\varphi_\varepsilon))
-A(y,V)\big]\cdot \tilde{K}dx\bigg|
\lesssim
\int_{\Omega}|P(y,\varphi_\varepsilon)-V|
\big(|P(y,\varphi_\varepsilon)|+|V|\big)^{p-2}|\tilde{K}|dx.
\end{aligned}
\end{equation}
Recalling the expression $V$ in $\eqref{eq:3.1}$, the right-hand side above
will be controlled by
\begin{equation*}
\begin{aligned}
&\int_{\Omega}|\varphi_\varepsilon - \nabla u_0|
\big(|P(y,\varphi_\varepsilon)|+|\nabla u_0|\big)^{p-2}|\tilde{K}|dx &:=I_1&\\
&+\int_{\Omega}|\varphi_\varepsilon - \nabla u_0|
\big|D^h\big(\varepsilon N(y,\varphi_\varepsilon)\big)\big|^{p-2}|\tilde{K}|dx &:=I_2&\\
&+\int_{\Omega}\Big|\nabla_y N(y,\varphi_\varepsilon)
- D^h\big(\varepsilon N(y,\varphi_\varepsilon)\big)\Big|
\big(|P(y,\varphi_\varepsilon)|+|V|\big)^{p-2}|\tilde{K}|dx. &:=I_3&
\end{aligned}
\end{equation*}
Before proceeding further, we state the following facts: there holds
\begin{equation}\label{f:3.10}
|P(x/\varepsilon,\varphi_\varepsilon(x))|
\lesssim |\varphi_\varepsilon(x)|
\lesssim |\nabla u_0(x)|
\end{equation}
for any $x\in\Sigma_{3\varepsilon}$, according to
the estimates $\eqref{pri:2.9}, \eqref{f:3.13}$ and, Remark $\ref{remark:2.1}$ implies
\begin{equation}\label{f:3.11}
 P(x/\varepsilon,\varphi_\varepsilon(x)) = 0
 \qquad \forall x\in\Omega\setminus\Sigma_{3\varepsilon}.
\end{equation}
Hence, using $\eqref{f:3.10}, \eqref{f:3.11}$ one may have
\begin{equation}\label{f:3.15}
\begin{aligned}
I_1 &\lesssim \int_{\Sigma_{3\varepsilon}}
|\varphi_\varepsilon - \nabla u_0||\nabla u_0|^{p-2} |\tilde{K}| dx
+ \int_{\Omega\setminus\Sigma_{3\varepsilon}}|\nabla u_0|^{p-1}|\tilde{K}|dx \\
&\lesssim \text{r.h.s.~of~} \eqref{pri:3.6},
\end{aligned}
\end{equation}
where the second inequality is due to the estimate $\eqref{f:3.8}$. By the same token, we also
have
\begin{equation}\label{f:3.12}
\begin{aligned}
&\quad\int_{\Omega}|\varphi_\varepsilon - \nabla u_0|\big|\nabla_y N(y,\varphi_\varepsilon)\big|^{p-2}
|\tilde{K}|dx \\
&\lesssim \int_{\Sigma_{3\varepsilon}}
|\varphi_\varepsilon - \nabla u_0||\nabla u_0|^{p-2} |\tilde{K}| dx
+ \int_{\Omega\setminus\Sigma_{3\varepsilon}}|\nabla u_0|^{p-1}|\tilde{K}|dx
\lesssim \text{~r.h.s.~of~} \eqref{pri:3.6}.
\end{aligned}
\end{equation}
We now turn to show the estimate of $I_2$, and
\begin{equation*}
\begin{aligned}
I_2
&\leq \int_\Omega |\varphi_\varepsilon - \nabla u_0|
\Big|D^h\big(\varepsilon N(y,\varphi_\varepsilon)\big)
-\nabla_y N(y,\varphi_\varepsilon)\Big|^{p-2}
|\tilde{K}| dx \\
&+ \int_{\Omega} |\varphi_\varepsilon - \nabla u_0|
\big|\nabla_y N(y,\varphi_\varepsilon)\big|^{p-2}
|\tilde{K}| dx.
\end{aligned}
\end{equation*}
Note that the estimate of the second term in the right-hand side above has been shown
in $\eqref{f:3.12}$. Thus we merely handle its first term, and
\begin{equation*}
\begin{aligned}
&\quad\int_\Omega |\varphi_\varepsilon - \nabla u_0|
\Big|D^h\big(\varepsilon N(y,\varphi_\varepsilon)\big)
-\nabla_y N(y,\varphi_\varepsilon)\Big|^{p-2}
|\tilde{K}| dx  \\
&\lesssim \int_{\Omega\setminus\Sigma_{3\varepsilon}}
|\nabla u_0||\tilde{K}| dx
+ \int_{\Sigma_{3\varepsilon}} |\nabla u_0|
\Big|D^h\big(\varepsilon N(y,\varphi_\varepsilon)\big)
-\nabla_y N(y,\varphi_\varepsilon)\Big|^{p-2}
|\tilde{K}| dx \\
&\lesssim \Big(\int_{\Omega\setminus\Sigma_{3\varepsilon}}
|\nabla u_0|^{\frac{p}{p-1}} dx\Big)^{1-\frac{1}{p}}
+ \Big(\int_{\Sigma_{3\varepsilon}}
\big|D^h\big(\varepsilon N(y,\varphi_\varepsilon)\big)
-\nabla_y N(y,\varphi_\varepsilon)\big|^{p}dx\Big)^{1-\frac{2}{p}}
\Big(\int_{\Omega}|\nabla u_0|^pdx\Big)^{\frac{1}{p}}
\end{aligned}
\end{equation*}
by using the facts $\eqref{f:3.10}$ and $\eqref{f:3.11}$ in the first inequality,
and the last one follows from H\"older's inequality.
This together with the estimate $\eqref{f:3.12}$ gives
\begin{equation}\label{f:3.17}
\begin{aligned}
I_2
&\lesssim
\text{~r.h.s.~of~} \eqref{pri:3.6}
+\Big(\int_{\Omega\setminus\Sigma_{3\varepsilon}}
|\nabla u_0|^{\frac{p}{p-1}} dx\Big)^{1-\frac{1}{p}} \\
& + \Big(\int_{\Sigma_{3\varepsilon}}
\big|D^h\big(\varepsilon N(y,\varphi_\varepsilon)\big)
-\nabla_y N(y,\varphi_\varepsilon)\big|^{p}dx\Big)^{1-\frac{2}{p}}
\Big(\int_{\Omega}|\nabla u_0|^pdx\Big)^{\frac{1}{p}}.
\end{aligned}
\end{equation}

We proceed to estimate $I_3$, and it follows from $\eqref{f:3.10}$ that
\begin{equation*}
\begin{aligned}
I_3 &\lesssim \int_{\Sigma_{3\varepsilon}}
\Big|\nabla_y N(y,\varphi_\varepsilon)
- D^h\big(\varepsilon N(y,\varphi_\varepsilon)\big)\Big|
|\nabla u_0|^{p-2}|\tilde{K}|dx &:=I_{31}&\\
& + \int_{\Sigma_{3\varepsilon}}
\Big|\nabla_y N(y,\varphi_\varepsilon)
- D^h\big(\varepsilon N(y,\varphi_\varepsilon)\big)\Big|
\big|D^h\big(\varepsilon N(y,\varphi_\varepsilon)\big)\big|^{p-2}|\tilde{K}|dx.
&:=I_{32}&
\end{aligned}
\end{equation*}
Then we will compute one by one, and the easier one is
\begin{equation}\label{f:3.14}
\begin{aligned}
I_{31} \leq \Big(
\int_{\Sigma_{3\varepsilon}}
\big|\nabla_y N(y,\varphi_\varepsilon)
- D^h\big(\varepsilon N(y,\varphi_\varepsilon)\big)\big|^{p}dx\Big)^{\frac{1}{p}}
\Big(\int_{\Omega}|\nabla u_0|^{p}dx\Big)^{1-\frac{2}{p}}.
\end{aligned}
\end{equation}
By using triangle's inequality coupled with H\"older's inequality, we obtain
\begin{equation*}
\begin{aligned}
I_{32}
&\lesssim
\int_{\Sigma_{3\varepsilon}}
\Big|\nabla_y N(y,\varphi_\varepsilon)
- D^h\big(\varepsilon N(y,\varphi_\varepsilon)\big)\Big|^{p-1}|\tilde{K}|dx \\
&+\int_{\Sigma_{3\varepsilon}}
\Big|\nabla_y N(y,\varphi_\varepsilon)
- D^h\big(\varepsilon N(y,\varphi_\varepsilon)\big)\Big|
\big|\nabla_yN(y,\varphi_\varepsilon)\big|^{p-2}|\tilde{K}|dx \\
&\lesssim\Big(\int_{\Sigma_{3\varepsilon}}
\big|\nabla_y N(y,\varphi_\varepsilon)
- D^h\big(\varepsilon N(y,\varphi_\varepsilon)\big)\big|^{p}dx\Big)^{1-\frac{1}{p}}
+ \text{~r.h.s.~of~} \eqref{f:3.14},
\end{aligned}
\end{equation*}
where we also use the fact $\eqref{f:3.10}$ in the last step. Obviously,
$I_3\lesssim I_{32}$. Plugging it with $\eqref{f:3.15}, \eqref{f:3.17}$ back into
the estimate $\eqref{f:3.16}$, we will have the desired estimate $\eqref{pri:3.7}$, and have completed the
whole proof.
\end{proof}

\begin{lemma}[$1<p\leq 2$]
Assume the same conditions as in Lemma $\ref{lemma:3.2}$. Then we have
\begin{equation}\label{pri:3.8}
\begin{aligned}
\bigg|\int_\Omega
\big[A(y,P(y,\varphi_\varepsilon))
-A(y,V)\big]\cdot \tilde{K}dx\bigg|
&\lesssim
\Big(\int_{\Omega\setminus
\Sigma_{8\varepsilon}}|\nabla u_0|^{p}dx\Big)^{1-\frac{1}{p}}
+ \varepsilon^{p-1}\Big(\int_{\Sigma_{4\varepsilon}}|\nabla^2 u_0|^{p}dx\Big)^{1-\frac{1}{p}} \\
&+\Big(\int_\Omega\big|D^h(\varepsilon N(y,\varphi_\varepsilon))
-\nabla_yN(y,\varphi_\varepsilon)\big|^pdx\Big)^{1-\frac{1}{p}}.
\end{aligned}
\end{equation}
\end{lemma}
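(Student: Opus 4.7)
The plan is to imitate Lemma~\ref{lemma:3.2}, but to exploit the fact that for $1<p\leq 2$ the map $\xi\mapsto A(y,\xi)$ is $(p-1)$-H\"older continuous in $\xi$, so that no degenerate factor of the form $(|P|+|V|)^{p-2}$ has to be carried along. Consequently the three-term decomposition ($I_1,I_2,I_3$) that was needed in the $p>2$ case collapses to a single, substantially cleaner estimate.

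Concretely, I would first record the pointwise bound $|A(y,\xi)-A(y,\xi')|\lesssim |\xi-\xi'|^{p-1}$, which is the standard consequence of the structural hypothesis~\eqref{c:1} together with the $(p-1)$-homogeneity~\eqref{a:1} in the subquadratic range (it is the pointwise counterpart of the second line of~\eqref{pri:2.18}). Setting $\xi=P(y,\varphi_\varepsilon)$ and $\xi'=V$ and recalling that
\[ P(y,\varphi_\varepsilon)-V=(\varphi_\varepsilon-\nabla u_0)+\bigl(\nabla_y N(y,\varphi_\varepsilon)-D^h(\varepsilon N(y,\varphi_\varepsilon))\bigr), \]
the subadditivity $(a+b)^{p-1}\leq a^{p-1}+b^{p-1}$ (valid for $0<p-1\leq 1$) gives
\[ |A(y,P(y,\varphi_\varepsilon))-A(y,V)|\;\lesssim\;|\varphi_\varepsilon-\nabla u_0|^{p-1}+\bigl|\nabla_y N(y,\varphi_\varepsilon)-D^h(\varepsilon N(y,\varphi_\varepsilon))\bigr|^{p-1}. \]

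Next, I would apply H\"older's inequality with conjugate exponents $p/(p-1)$ and $p$, using the normalization $\|\tilde K\|_{L^p(\Omega)}=1$, to obtain
\[ \Bigl|\int_\Omega\bigl[A(y,P(y,\varphi_\varepsilon))-A(y,V)\bigr]\cdot\tilde K\,dx\Bigr|\lesssim \|\varphi_\varepsilon-\nabla u_0\|_{L^p(\Omega)}^{p-1}+\Bigl(\int_\Omega|\nabla_y N(y,\varphi_\varepsilon)-D^h(\varepsilon N(y,\varphi_\varepsilon))|^p\,dx\Bigr)^{1-1/p}. \]
The second summand is already the third term of~\eqref{pri:3.8}. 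For the first one, I would split $\Omega=(\Omega\setminus\Sigma_{3\varepsilon})\cup\Sigma_{3\varepsilon}$: outside $\Sigma_{3\varepsilon}$ the function $\varphi_\varepsilon$ vanishes by~\eqref{f:3.13}, while on $\Sigma_{8\varepsilon}$ the cut-off $\psi_{4\varepsilon}$ is identically~$1$, so
\[ \|\varphi_\varepsilon-\nabla u_0\|_{L^p(\Omega)}^p\lesssim \int_{\Omega\setminus\Sigma_{8\varepsilon}}|\nabla u_0|^p\,dx+\|S_\varepsilon(\nabla u_0)-\nabla u_0\|_{L^p(\Sigma_{8\varepsilon})}^p, \]
and the smoothing estimate~\eqref{pri:2.7} controls the second piece by $\varepsilon^p\int_{\Sigma_{4\varepsilon}}|\nabla^2 u_0|^p\,dx$. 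Raising to the power $(p-1)/p<1$ and invoking subadditivity once more produces precisely the first two terms of~\eqref{pri:3.8}.

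I do not anticipate any serious obstacle: the argument is visibly simpler than Lemma~\ref{lemma:3.2} because the concavity of $t\mapsto t^{p-1}$ eliminates the need to introduce the set $\Omega_{M,\varepsilon}$ or to track where $|P(y,\varphi_\varepsilon)|$ or $|V|$ are small. The only point requiring care is verifying that the $(p-1)$-H\"older estimate on $A(y,\cdot)$ is indeed available from~\eqref{c:1} together with~\eqref{a:1}; this is a classical property of operators of $p$-Laplace type in the subquadratic regime, completely analogous to the way~\eqref{pri:2.18} was derived in Lemma~\ref{lemma:2.5}.
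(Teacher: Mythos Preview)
Your proposal is correct and follows essentially the same route as the paper: use the $(p-1)$-H\"older bound $|A(y,\xi)-A(y,\xi')|\lesssim|\xi-\xi'|^{p-1}$ (which the paper invokes directly from~\eqref{c:1}), apply H\"older with $\|\tilde K\|_{L^p}=1$, split $P(y,\varphi_\varepsilon)-V$ into the two obvious pieces, and control $\|\varphi_\varepsilon-\nabla u_0\|_{L^p(\Omega)}$ by a layer/co-layer argument combined with the smoothing estimate~\eqref{pri:2.7}. The only cosmetic difference is ordering: the paper first applies H\"older to reach $\|P-V\|_{L^p}^{p-1}$ and then the triangle inequality in $L^p$, whereas you apply the pointwise subadditivity $(a+b)^{p-1}\le a^{p-1}+b^{p-1}$ before integrating; both are equivalent here.
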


\begin{proof}
In terms of the assumption $\eqref{c:1}$, we have
\begin{equation*}
\begin{aligned}
\bigg|\int_\Omega
\big[A(y,P(y,\varphi_\varepsilon))
-A(y,V)\big]\cdot \tilde{K}dx\bigg|
&\lesssim \int_{\Omega}\big|P(y,\varphi_\varepsilon)-V\big|^{p-1}\big|\tilde{K}\big|dx \\
&\lesssim \Big(\int_{\Omega}\big|P(y,\varphi_\varepsilon)-V\big|^{p}dx\Big)^{1-\frac{1}{p}},
\end{aligned}
\end{equation*}
and the right-hand side is further controlled by
\begin{equation*}
\underbrace{\Big(\int_{\Omega}\big|\varphi_\varepsilon-\nabla u_0\big|^{p}dx\Big)^{1-\frac{1}{p}}}_{I}
+ \Big(\int_{\Omega}\big|\nabla_y N(y,\varphi_\varepsilon)
-D^h\big(\varepsilon N(y,\varphi_\varepsilon)\big)\big|^{p}dx\Big)^{1-\frac{1}{p}}.
\end{equation*}
Recall that $\varphi_\varepsilon = S_\varepsilon(\psi_{4\varepsilon}\nabla u_0)$.
By $\eqref{pri:2.7}$, one may derive that
\begin{equation*}
 I \lesssim \Big(\int_{\Omega\setminus\Sigma_{8\varepsilon}}|\nabla u_0|^p dx\Big)^{1-\frac{1}{p}}
 + \varepsilon^{p-1}
 \Big(\int_{\Sigma_{4\varepsilon}}|\nabla^2 u_0|^p dx\Big)^{1-\frac{1}{p}},
\end{equation*}
and this together with the above estimate shows the estimate $\eqref{pri:3.8}$. The proof
is complete.
\end{proof}

\subsection{Layer and co-layer type estimates}

\begin{lemma}[higher regularity estimates for $u_0$]\label{lemma:3.3}
Suppose that $\widehat{A}$ satisfies the structure assumption $\eqref{c:2}$.
Let $u_0\in W^{1,p}_{loc}(\Omega)$ be a weak solution of
$\mathcal{L}_0 u_0 = 0$ in $\Omega$, then there hold
\begin{equation}\label{pri:3.9}
\dashint_{B(0,r)}|\nabla^2 u_0|^{2}|\nabla u_0|^{p-2}
\lesssim \frac{C}{r^2} \dashint_{B(0,2r)}|\nabla u_0|^{p}
\end{equation}
for the case $2\leq p<\infty$, and
\begin{equation}\label{pri:3.10}
\Big(\dashint_{B(0,r)}|\nabla^2 u_0|^{p}\Big)^{1/p}
\lesssim \frac{C}{r} \Big(\dashint_{B(0,2r)}|\nabla u_0|^{p}\Big)^{1/p}
\end{equation}
for the case $1<p\leq 2$, where $B(0,2r)\subset\Omega$ for any $0<r<1$. Also, we have
\begin{equation}\label{pri:3.18}
 \sup_{B(0,r/2)}|\nabla u_0|
\lesssim \Big(\dashint_{B(0,r)} |\nabla u_0|^p\Big)^{1/p}.
\end{equation}
Here the up to constant depends on $\tilde{\mu}_0,\tilde{\mu}_1,p$ and $d$.
\end{lemma}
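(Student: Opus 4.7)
The plan is to exploit that $\widehat{A}$ has no spatial dependence, so $\mathcal{L}_0 u_0 = 0$ is a genuine p-Laplace type equation governed by the structural hypothesis $\eqref{c:2}$. All three estimates are then classical regularity results for such equations; I would deduce them via the difference quotient method combined with the standard $C^{1,\alpha}$ theory.

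First, the Lipschitz bound $\eqref{pri:3.18}$ follows from the Uhlenbeck--DiBenedetto--Tolksdorf theory: under the coerciveness and growth in $\eqref{c:2}$, any weak solution has locally bounded gradient with exactly the stated quantitative form. This is the same citation invoked for $\eqref{pri:2.9}$ (interior $C^{1,\theta}$ for the corrector), now applied to $\widehat{A}$ in place of $A(y,\cdot)$; no periodic structure is needed since $\widehat{A}$ is already $x$-independent.

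For $\eqref{pri:3.9}$ and $\eqref{pri:3.10}$, the key step is a weighted Caccioppoli inequality obtained through difference quotients. Fix $k\in\{1,\dots,d\}$ and $0<h\ll r$, set $w_h(x) = D_k^h u_0(x)$. Since $\widehat{A}$ does not depend on $x$, subtracting the weak equation at $x$ and $x+he_k$ gives
\begin{equation*}
\int_{B(0,2r)} \bigl[\widehat{A}(\nabla u_0(x+he_k)) - \widehat{A}(\nabla u_0(x))\bigr]\cdot\nabla \phi\, dx = 0
\end{equation*}
for any admissible test function $\phi$. I would test with $\phi = \eta^2 w_h$, where $\eta \in C_0^\infty(B(0,3r/2))$ satisfies $\eta \equiv 1$ on $B(0,r)$ and $|\nabla\eta|\lesssim 1/r$. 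Using the lower bound in $\eqref{c:2}$ on the left and the upper bound together with Young's inequality on the cross term produced by $\nabla(\eta^2)$, and absorbing, one obtains the weighted estimate
\begin{equation*}
\int_{B(0,r)} \bigl(|\nabla u_0(x)|+|\nabla u_0(x+he_k)|\bigr)^{p-2}\bigl|D_k^h \nabla u_0\bigr|^2 dx
\lesssim \frac{1}{r^2} \int_{B(0,2r)}|\nabla u_0|^p\, dx.
\end{equation*}
Summing over $k$ and sending $h\to 0$ (the right-hand side being $h$-independent) gives $\nabla^2 u_0 \in L^2_{\mathrm{loc}}$ with the stated weight, which is precisely $\eqref{pri:3.9}$ when $p\geq 2$.

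For the case $1<p\leq 2$, the same weighted estimate can be converted into the unweighted $L^p$ bound $\eqref{pri:3.10}$ by H\"older's inequality with exponents $2/p$ and $2/(2-p)$ against the factor $(|\nabla u_0|+|\nabla u_0(\cdot+he_k)|)^{p(2-p)/2}$, followed by the Lipschitz bound $\eqref{pri:3.18}$ to control the resulting $L^\infty$ factor (or equivalently, a direct application of Tolksdorf's $W^{2,p}_{\mathrm{loc}}$ regularity). The main obstacle is the legitimacy of the difference-quotient testing when $\widehat{A}$ is only assumed to satisfy $\eqref{c:2}$ without any a priori differentiability in $\xi$; this is circumvented because the coerciveness $\langle \widehat{A}(\xi)-\widehat{A}(\xi'),\xi-\xi'\rangle \gtrsim (|\xi|+|\xi'|)^{p-2}|\xi-\xi'|^2$ from $\eqref{c:2}$ is exactly the quantity produced by the test function $\eta^2 w_h$, so no chain-rule differentiation of $\widehat{A}$ is ever required. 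A minor additional check is that the integrals remain finite as $h\to 0$, which is immediate from Meyer's higher integrability combined with the Lipschitz estimate $\eqref{pri:3.18}$.
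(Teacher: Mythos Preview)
Your proposal is correct and matches the paper's own approach: the paper simply cites \cite[pp.267--271]{GE} for $\eqref{pri:3.9}$--$\eqref{pri:3.10}$ and \cite[Theorem 3.19]{MZ} for $\eqref{pri:3.18}$, and in the proof of the next lemma it explicitly remarks that ``the estimates $\eqref{pri:3.9}$ and $\eqref{pri:3.10}$ are in fact proved by an analysis of certain difference quotients''---exactly the mechanism you outline. One small caution: in the case $1<p<2$, the step bounding $\int W^{p-2}|w_h|^2$ by $\int|\nabla u_0|^p$ on the right-hand side of your weighted Caccioppoli inequality is not immediate (since $|w_h|=|D_k^h u_0|$ is not pointwise controlled by $W=|\nabla u_0(x)|+|\nabla u_0(x+he_k)|$), and in Giusti this is handled via a regularization; your fallback to Tolksdorf's $W^{2,p}_{\mathrm{loc}}$ result is a clean way to sidestep this.
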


\begin{proof}
The proof of the estimates $\eqref{pri:3.9},\eqref{pri:3.10}$
may be found in \cite[pp.267-271]{GE}, and the estimate $\eqref{pri:3.18}$ is included in
\cite[Theorem 3.19]{MZ}.
\end{proof}

\begin{lemma}[layer and co-layer estimates]\label{lemma:3.7}
Let $u_0\in W^{1,p}_{loc}(\Omega)$ be a weak solution of
$\mathcal{L}_0 u_0 = 0$ in $\Omega$, with the construction assumption $\eqref{c:2}$.
Then there holds the layer type estimate
\begin{equation}\label{pri:3.12}
\Big(\int_{\Omega\setminus\Sigma_{8\varepsilon}}|\nabla u_0|^p dx\Big)^{1/p}
\lesssim \varepsilon^{1/p-1/q}\Big(\int_{\Omega}|\nabla u_0|^qdx\Big)^{1/q}
\end{equation}
for some $q>p$. Meanwhile, we have the co-layer type estimates
\begin{equation}\label{pri:3.13}
\Big(\int_{\Sigma_{3\varepsilon}}|\nabla^2 u_0|^{2}|\nabla u_0|^{p-2} dx\Big)^{1/p}
\lesssim \varepsilon^{-1/p-1/q} \Big(\int_{\Omega}|\nabla u_0|^{q} dx\Big)^{1/q}
\end{equation}
for the case $2\leq p<\infty$, and
\begin{equation}\label{pri:3.14}
\Big(\int_{\Sigma_{3\varepsilon}}|\nabla^2 u_0|^{p}dx\Big)^{1/p}
\lesssim \varepsilon^{1/p-1/q-1}\Big(\int_{\Omega}|\nabla u_0|^{q}dx\Big)^{1/q}
\end{equation}
for the case $1<p\leq 2$.
Here the up to constant depends on $\tilde{\mu}_0,\tilde{\mu}_1,p,d$ and $r_0$.
\end{lemma}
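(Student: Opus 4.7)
Plan: The lemma bundles three estimates of distinct flavors. Estimate \eqref{pri:3.12} is a boundary-layer estimate for $\nabla u_0$ and follows directly from H\"older's inequality combined with Meyer's higher integrability $\nabla u_0 \in L^q(\Omega)$ for some $q > p$. Estimates \eqref{pri:3.13} and \eqref{pri:3.14} are co-layer estimates for $\nabla^2 u_0$ which require the Caccioppoli-type bounds \eqref{pri:3.9}, \eqref{pri:3.10} from Lemma \ref{lemma:3.3} together with a dyadic decomposition of $\Sigma_{3\varepsilon}$ by distance to $\partial\Omega$.

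For the layer estimate \eqref{pri:3.12}, since $\Omega$ is a bounded Lipschitz domain, $|\Omega \setminus \Sigma_{8\varepsilon}| \lesssim \varepsilon$. H\"older's inequality with conjugate pair $(q/p,\, q/(q-p))$ then gives
\begin{equation*}
\int_{\Omega \setminus \Sigma_{8\varepsilon}} |\nabla u_0|^p\, dx \leq |\Omega \setminus \Sigma_{8\varepsilon}|^{1 - p/q} \Big(\int_\Omega |\nabla u_0|^q\, dx\Big)^{p/q} \lesssim \varepsilon^{1 - p/q} \|\nabla u_0\|_{L^q(\Omega)}^p,
\end{equation*}
and taking the $p$-th root produces \eqref{pri:3.12}.

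For the co-layer estimates, I would decompose $\Sigma_{3\varepsilon} = \bigcup_{k \geq k_0} A_k$ into dyadic annuli $A_k = \{x \in \Omega : 2^k \varepsilon \leq \text{dist}(x,\partial\Omega) < 2^{k+1}\varepsilon\}$, with $2^{k_0}\varepsilon \sim 3\varepsilon$ and the sum running up to $k_{\max} \sim \log_2(r_0/\varepsilon)$. On each $A_k$, cover by finitely overlapping balls $B(x_j^{(k)}, r_k)$ with $r_k = 2^k\varepsilon/10$, so that $B(x_j^{(k)}, 2r_k) \subset \Omega$ is automatic. Apply \eqref{pri:3.9} for $p \geq 2$ or \eqref{pri:3.10} for $1 < p \leq 2$ on each ball, sum over $j$ using the finite-overlap property to obtain an integral of $|\nabla u_0|^p$ over a slightly enlarged annulus $\widetilde{A}_k$ of measure $\lesssim 2^k\varepsilon$, and apply H\"older one more time to exchange $L^p$ for $L^q$, picking up a factor $(2^k\varepsilon)^{1 - p/q}$. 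Combining with the Caccioppoli factor $(2^k\varepsilon)^{-2}$ or $(2^k\varepsilon)^{-p}$ and summing over $k$ produces a geometric series of common ratios $2^{-(1+p/q)}$ or $2^{1-p(1+1/q)}$; both are strictly less than $1$ for $1 < p < \infty$, so the sums collapse to $\varepsilon^{-1 - p/q}\|\nabla u_0\|_{L^q(\Omega)}^p$ and $\varepsilon^{1 - p - p/q}\|\nabla u_0\|_{L^q(\Omega)}^p$ respectively. Taking $p$-th roots recovers precisely the claimed exponents $-1/p - 1/q$ and $1/p - 1/q - 1$.

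The main obstacle is that a naive uniform $\varepsilon$-ball covering of $\Sigma_{3\varepsilon}$ yields only $\varepsilon^{-2/p}\|\nabla u_0\|_{L^q}$ and $\varepsilon^{-1}\|\nabla u_0\|_{L^q}$, which are weaker than the claims by a factor $\varepsilon^{1/p - 1/q}$. Recovering the sharp exponent requires using the largest admissible radius at each distance scale, so that the Caccioppoli singularity $1/r$ softens away from $\partial\Omega$, and simultaneously exploiting the smallness of the dyadic annular measure near $\partial\Omega$ to invoke H\"older and exchange $L^p$ for $L^q$. The two effects conspire so that the dominant contribution comes from the innermost annulus $A_{k_0}$, where the extra $\varepsilon^{1/p - 1/q}$ gain from H\"older on a layer of thickness $\sim \varepsilon$ exactly bridges the gap between the naive and sharp bounds; verifying convergence of the series and the compatibility of $r_k = 2^k\varepsilon/10$ with $\widetilde{A}_k \subset \Omega$ are the only technical points that need care.
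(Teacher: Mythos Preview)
Your proof is correct. The layer estimate \eqref{pri:3.12} is handled identically to the paper, via H\"older and $|\Omega\setminus\Sigma_{8\varepsilon}|\lesssim\varepsilon$. For the co-layer estimates \eqref{pri:3.13}--\eqref{pri:3.14} your dyadic decomposition into annuli $A_k=\{2^k\varepsilon\le\delta(x)<2^{k+1}\varepsilon\}$, followed by a finite-overlap covering, application of \eqref{pri:3.9} or \eqref{pri:3.10} at scale $r_k\sim 2^k\varepsilon$, H\"older on each $\widetilde A_k$, and summation of the resulting geometric series, is a valid and standard route that yields exactly the claimed exponents.

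The paper takes a continuous variant of the same idea: rather than decomposing into dyadic shells, it invokes the averaging identity
\[
\int_{\Sigma_{3\varepsilon}} f(x)\,dx \;\asymp\; \int_{\Sigma_{3\varepsilon}}\dashint_{B(x,\delta(x)/4)} f(y)\,dy\,dx
\]
(based on $\delta(x)\approx\delta(y)$ for $y\in B(x,\delta(x)/4)$ and Fubini), applies \eqref{pri:3.9} or \eqref{pri:3.10} pointwise at radius $\delta(x)/4$, and arrives at the weighted integral $\int_{\Sigma_{2\varepsilon}}[\delta(x)]^{-2}|\nabla u_0|^p\,dx$ (resp.\ $[\delta(x)]^{-p}$), which is then bounded by H\"older and the co-area formula. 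Both arguments encode the same mechanism---scaling the interior Caccioppoli estimate to the distance-to-boundary and exploiting the thinness of layers near $\partial\Omega$ via H\"older---and the paper's version is somewhat more compact while yours is more explicit about where the geometric series and the sharp exponent come from.
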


\begin{proof}
These estimates have already been appeared in the linear case, which are based upon
Mayer's estimate and Lemma $\ref{lemma:3.3}$. It follows from H\"older's inequality  that
\begin{equation*}
\int_{\Omega\setminus\Sigma_{8\varepsilon}}|\nabla u_0|^p dx
\lesssim |\Omega\setminus\Sigma_{8\varepsilon}|^{1-\frac{p}{q}}
\Big(\int_{\Omega}|\nabla u_0|^qdx\Big)^{\frac{p}{q}}
\end{equation*}
and this gives the estimate $\eqref{pri:3.12}$. Since $\delta(x)\approx\delta(y)$, there holds
\begin{equation}\label{f:3.39}
\begin{aligned}
\int_{\Sigma_{3\varepsilon}}|\nabla^2 u_0|^{2}|\nabla u_0|^{p-2} dx
&\lesssim \int_{\Sigma_{3\varepsilon}}
\dashint_{B(x,\delta(x)/4)}|\nabla^2 u_0|^{2}|\nabla u_0|^{p-2} dy dx
\lesssim \int_{\Sigma_{2\varepsilon}}|\nabla^2 u_0|^{2}|\nabla u_0|^{p-2} dx.
\end{aligned}
\end{equation}
Then, we have
\begin{equation*}
\begin{aligned}
\int_{\Sigma_{3\varepsilon}}|\nabla^2 u_0|^{2}|\nabla u_0|^{p-2} dx
&\lesssim \int_{\Sigma_{3\varepsilon}}
\dashint_{B(x,\delta(x)/4)}|\nabla^2 u_0|^{2}|\nabla u_0|^{p-2} dy dx \\
&\lesssim \int_{\Sigma_{3\varepsilon}} [\delta(x)]^{-2}
\dashint_{B(x,\delta(x)/2)}|\nabla u_0|^{p} dy dx
\lesssim \int_{\Sigma_{2\varepsilon}} [\delta(x)]^{-2}|\nabla u_0|^{p}dx\\
&\lesssim \varepsilon^{-1-\frac{p}{q}}\Big(\int_\Omega |\nabla u_0|^q dx\Big)^{\frac{p}{q}},
\end{aligned}
\end{equation*}
where we also use the estimate $\eqref{pri:3.9}$ in the second inequality,
and this will imply the estimate $\eqref{pri:3.13}$. Using the same argument
as in the proof above, we may obtain the estimate $\eqref{pri:3.14}$,
which is based upon $\eqref{pri:3.10}$. We have completed the proof.
\end{proof}

\begin{lemma}
Assume the same conditions as in Lemma $\ref{lemma:2.9}$.
Let $\varphi_\varepsilon = S_{\varepsilon}(\psi_{4\varepsilon}\nabla u_0)$,
and $\beta = 1/p-1/q$ with some $q>p$, then we have
\begin{equation}\label{pri:3.11}
\begin{aligned}
&\Big(\int_\Omega\big|D^h_i(\varepsilon N(y,\varphi_\varepsilon))
-\partial_iN(y,\varphi_\varepsilon)\big|^pdx\Big)^{\frac{1}{p}} \\
&\lesssim \left\{\begin{aligned}
&\big\{\varepsilon^{\theta(\tau-1)} 
+\varepsilon^{(1-\alpha)(1-\tau)+\beta}\big\}\Big(\int_{\Omega}|\nabla u_0|^qdx\Big)^{\frac{1}{q}}
&\text{if~~}& 2< p<\infty;\\
&\big\{\varepsilon^{\theta(\tau-1)} 
+\varepsilon^{(1-\alpha)(1-\tau)+\alpha\beta}\big\}  
\Big(\int_{\Omega}|\nabla u_0|^qdx\Big)^{\frac{1}{q}} &\text{if~~}& 1<p<2,
\end{aligned}\right.
\end{aligned}
\end{equation}
where $1<\tau<(1+\beta-\alpha)/(1-\alpha)$ if $2<p<\infty$, and
$1<\tau<(1+\alpha\beta-\alpha)/(1-\alpha)$ if $1<p<2$. Here the
up to multiplicative constant will additionally depend on $\tilde{\mu}_0, \tilde{\mu}_1$ and $r_0$.
\end{lemma}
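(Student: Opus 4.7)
The plan is to apply the estimate $\eqref{pri:2.15}$ of Lemma $\ref{lemma:2.9}$ directly with $\varphi=\varphi_\varepsilon$ and then dispose of the two resulting terms using the structure of the smoothing operator together with the layer and co-layer bounds of Lemma $\ref{lemma:3.7}$. The first term $\varepsilon^{p\theta(\tau-1)}\int_\Omega|\varphi_\varepsilon|^p\,dx$ is handled immediately: $\eqref{pri:2.6}$ (with $\varpi\equiv 1$) gives $\|\varphi_\varepsilon\|_{L^p(\Omega)}\lesssim\|\nabla u_0\|_{L^p(\Omega)}$, and H\"older on the bounded domain $\Omega$ yields $\|\nabla u_0\|_{L^p(\Omega)}\lesssim r_0^{d\beta}\|\nabla u_0\|_{L^q(\Omega)}$; taking $p$-th roots will produce the $\varepsilon^{\theta(\tau-1)}\|\nabla u_0\|_{L^q}$ contribution.

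For the remainder in the case $p>2$, I would set $\mathcal{R}_1=\int|\varphi_\varepsilon|^{p-2}|D_i^h\varphi_\varepsilon|^2\,dx$. Since $\text{supp}(\varphi_\varepsilon)\subset\Sigma_{3\varepsilon}$ and $|\varphi_\varepsilon|\leq 2|\nabla u_0|$ on $\Sigma_{3\varepsilon}$ by $\eqref{f:3.13}$, the weight $|\varphi_\varepsilon|^{p-2}$ can be replaced by $|\nabla u_0|^{p-2}$; writing $|D_i^h\varphi_\varepsilon|$ as an average of $|\nabla\varphi_\varepsilon|$ along a segment of length $h<\varepsilon$ and translating within the weight using the H\"older continuity of $\nabla u_0$ from $\eqref{a:5}$ should reduce $\mathcal{R}_1$ to $\int|\nabla u_0|^{p-2}|\nabla\varphi_\varepsilon|^2\,dx$ up to a lower-order error. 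Next, I would split $\nabla\varphi_\varepsilon=S_\varepsilon(\nabla\psi_{4\varepsilon}\otimes\nabla u_0)+S_\varepsilon(\psi_{4\varepsilon}\nabla^2 u_0)=:T_1+T_2$. For each $T_i$, Jensen's inequality yields $|T_i|^2\leq S_\varepsilon(|\cdot|^2)$, and Fubini migrates the weight $|\nabla u_0|^{p-2}$ across the convolution as $S_\varepsilon(|\nabla u_0|^{p-2})$, which under $\eqref{a:5}$ stays $\lesssim|\nabla u_0|^{p-2}$ up to harmless H\"older corrections. Then the $T_1$-contribution collapses to $\varepsilon^{-2}\int_{\Omega\setminus\Sigma_{8\varepsilon}}|\nabla u_0|^p\,dx\lesssim\varepsilon^{p\beta-2}\|\nabla u_0\|_{L^q}^p$ by the layer bound $\eqref{pri:3.12}$, and the $T_2$-contribution reduces to $\int_{\Sigma_{4\varepsilon}}|\nabla u_0|^{p-2}|\nabla^2 u_0|^2\,dx$ of the same order by the co-layer bound $\eqref{pri:3.13}$. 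Hence $\mathcal{R}_1^{1/p}\lesssim\varepsilon^{\beta-2/p}\|\nabla u_0\|_{L^q}$, and multiplication by the prefactor $\varepsilon^{1-(1-\alpha)\tau}$ with $\alpha=2/p$ will produce exactly $\varepsilon^{(1-\alpha)(1-\tau)+\beta}\|\nabla u_0\|_{L^q}$.

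The case $1<p<2$ is cleaner because Lemma $\ref{lemma:3.3}$ now supplies an unweighted $L^p$-bound for $\nabla^2 u_0$. With $\mathcal{R}_2=\int|\varphi_\varepsilon|^a|D_i^h\varphi_\varepsilon|^b\,dx$, $a=p(2-p)/(3-p)$, $b=p/(3-p)$, $a+b=p$, the dual H\"older pair $(p/a,p/b)$ gives $\mathcal{R}_2\leq\|\varphi_\varepsilon\|_{L^p}^a\|D_i^h\varphi_\varepsilon\|_{L^p}^b$; the first factor is handled as in the opening paragraph, and for the second, $L^p$-boundedness of $S_\varepsilon$ combined with $\|D_i^h f\|_{L^p}\leq\|\nabla f\|_{L^p}$ yields $\|D_i^h\varphi_\varepsilon\|_{L^p}\leq\|\nabla(\psi_{4\varepsilon}\nabla u_0)\|_{L^p}\lesssim\varepsilon^{\beta-1}\|\nabla u_0\|_{L^q}$ via $\eqref{pri:3.12}$ and $\eqref{pri:3.14}$. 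Then $\mathcal{R}_2^{1/p}\lesssim\varepsilon^{b(\beta-1)/p}\|\nabla u_0\|_{L^q}=\varepsilon^{\alpha(\beta-1)}\|\nabla u_0\|_{L^q}$ (since $b/p=1/(3-p)=\alpha$), and multiplication by $\varepsilon^{1-(1-\alpha)\tau}$ delivers the target $\varepsilon^{(1-\alpha)(1-\tau)+\alpha\beta}\|\nabla u_0\|_{L^q}$. The genuine obstacle will be the $p>2$ analysis: since Lemma $\ref{lemma:3.3}$ only supplies a \emph{weighted} second-derivative bound, a crude H\"older cannot be used and $|\nabla u_0|^{p-2}$ must be transported across the smoothing operator; the Jensen--Fubini maneuver combined with the pointwise comparability $S_\varepsilon(|\nabla u_0|^{p-2})\lesssim|\nabla u_0|^{p-2}$ granted by $\eqref{a:5}$ is exactly what matches the $\varepsilon^{\beta-2/p}$ supplied by $\eqref{pri:3.13}$.
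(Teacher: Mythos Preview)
Your proposal is correct and follows essentially the same route as the paper: apply \eqref{pri:2.15}, dominate $|\varphi_\varepsilon|^{p-2}$ by $|\nabla u_0|^{p-2}$ via \eqref{f:3.13}, split into a layer part (handled by \eqref{pri:3.12}) and a co-layer part (handled by \eqref{pri:3.13} for $p>2$ and \eqref{pri:3.14} for $1<p<2$), and check that the exponents combine to $(1-\alpha)(1-\tau)+\beta$ respectively $(1-\alpha)(1-\tau)+\alpha\beta$. Your arithmetic on the exponents is exactly the paper's.

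The only substantive difference is how the smoothing operator is discharged in the $p>2$ step. The paper commutes $D_i^h$ with $S_\varepsilon$, writing $D_i^h\varphi_\varepsilon=S_\varepsilon\big(D_i^h(\psi_{4\varepsilon}\nabla u_0)\big)$, and then works directly with the difference quotient $D_i^h(\psi_{4\varepsilon}\nabla u_0)$, invoking the fact that the interior weighted bound \eqref{pri:3.9} (and hence the co-layer bound \eqref{pri:3.13}) is proved at the difference-quotient level; this lets the paper bypass an explicit passage through $\nabla^2 u_0$. You instead convert $D_i^h\varphi_\varepsilon$ to $\nabla\varphi_\varepsilon$ and then use Jensen--Fubini plus the small-scale comparability $S_\varepsilon(|\nabla u_0|^{p-2})\lesssim|\nabla u_0|^{p-2}$ from \eqref{a:5} to transport the weight across the convolution. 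Both devices accomplish the same reduction to \eqref{pri:3.13}; the paper's is slightly leaner because it avoids the weight-transport step altogether, while yours is more explicit about where the pointwise control of $\nabla u_0$ enters.
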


\begin{proof}
We first prove the first line of the estimate $\eqref{pri:3.11}$, and it follows from
the estimate $\eqref{pri:2.15}$ that
\begin{equation*}
\begin{aligned}
&\quad\Big(\int_\Omega\big|D^h_i(\varepsilon N(y,\varphi_\varepsilon))
-\partial_iN(y,\varphi_\varepsilon)\big|^pdx\Big)^{\frac{1}{p}} \\
&\lesssim \varepsilon^{\theta(\tau-1)}
\Big(\int_{\Omega}|\nabla u_0|^p dx\Big)^{\frac{1}{p}}
+ \varepsilon^{1-(1-\alpha)\tau}
\Big(\int_{\Sigma_{4\varepsilon}}|\nabla u_0|^{p-2}|D_i^h(\psi_{4\varepsilon}\nabla u_0)|^2
dx\Big)^{\frac{1}{p}},
\end{aligned}
\end{equation*}
where we use the facts $\eqref{f:3.13}$, and
\begin{equation*}
 D_i^h\big[S_{\varepsilon}(\psi_{4\varepsilon}\nabla u_0)\big]
 = S_\varepsilon(D_i^h(\psi_{4\varepsilon}\nabla u_0)).
\end{equation*}
Then we turn to the second term in the right-hand side above, and
\begin{equation*}
\begin{aligned}
\int_{\Sigma_{4\varepsilon}}|\nabla u_0|^{p-2}|D_i^h(\psi_{4\varepsilon}\nabla u_0)|^2
dx
&= \int_{\Sigma_{4\varepsilon}}|\nabla u_0|^{p-2}
|D_i^h(\psi_{4\varepsilon})\nabla u_0
+ \psi_{4\varepsilon}(x+the_i)D_i^h(\nabla u_0)|^2dx\\
&\lesssim
\underbrace{\varepsilon^{-2} \int_{\Omega\setminus\Sigma_{9\varepsilon}}|\nabla u_0|^p dx}_{I_1}
+ \underbrace{\int_{\Sigma_{3\varepsilon}}|\nabla u_0|^2|D_i^h(\nabla u_0)|^2 dx}_{I_2}
\end{aligned}
\end{equation*}
by noting that
\begin{equation*}
|D_i^h(\psi_{4\varepsilon})(x)| \leq \int_0^1|\nabla\psi_{4\varepsilon}(x+the_i)|dt
\leq C/\varepsilon,
\qquad\text{and}\qquad \text{supp}\big(D_i^h(\psi_{4\varepsilon})\big)
\subset \Sigma_{3\varepsilon}\setminus\Sigma_{9\varepsilon}.
\end{equation*}

Using the same argument as in the proof of Lemma $\ref{lemma:3.7}$, we can easily carry out
the proof this theorem, and we just mention that the estimates $\eqref{pri:3.9}$ and
$\eqref{pri:3.10}$ are in fact proved by an analysis of certain difference quotients
(see for example \cite[Theorem 8.1]{GE}).
In terms of $I_1$, it follows from H\"older's inequality that
\begin{equation*}
\begin{aligned}
I_1 &\leq \varepsilon^{-2}|\Omega\setminus\Sigma_{9\varepsilon}|^{1-\frac{p}{q}}
\Big(\int_{\Omega}|\nabla u_0|^q\Big)^{\frac{p}{q}}
\lesssim \varepsilon^{-1-\frac{p}{q}}\Big(\int_{\Omega}|\nabla u_0|^q\Big)^{\frac{p}{q}},
\end{aligned}
\end{equation*}
and we proceed to show
\begin{equation*}
\begin{aligned}
I_2
\lesssim \int_{\Sigma_{3\varepsilon}}
\dashint_{B(x,\delta(x)/4)}|\nabla u_0|^{p-2}|D_i^h(\nabla u_0)|^2 dy dx
&\lesssim \int_{\Sigma_{3\varepsilon}}[\delta(x)]^{-2}
\dashint_{B(x,\delta(x)/2)}|\nabla u_0|^p dy dx\\
&\lesssim \int_{\Sigma_{2\varepsilon}}[\delta(x)]^{-2}|\nabla u_0|^p dx
\lesssim \varepsilon^{-1-\frac{p}{q}}\Big(\int_{\Omega}|\nabla u_0|^q\Big)^{\frac{p}{q}}
\end{aligned}
\end{equation*}
where the first and third inequalities are due to the fact $\eqref{f:3.39}$, and the second one
is based upon the estimate $\eqref{pri:3.9}$. Clearly, the above two estimates give
\begin{equation}\label{f:3.21}
\begin{aligned}
\Big(\int_{\Sigma_{4\varepsilon}}|\nabla u_0|^{p-2}|D_i^h(\psi_{4\varepsilon}\nabla u_0)|^2
dx\Big)^{\frac{1}{p}}
\lesssim \varepsilon^{-\frac{1}{p}-\frac{1}{q}}\Big(\int_\Omega|\nabla u_0|^qdx\Big)^{1/q}.
\end{aligned}
\end{equation}
Then by noting that $-(1/p+1/q) = -\alpha + \beta$, we have proved
\begin{equation*}
\begin{aligned}
\Big(\int_\Omega\big|D^h_i(\varepsilon N(y,\varphi_\varepsilon))
-\partial_iN(y,\varphi_\varepsilon)\big|^pdx\Big)^{\frac{1}{p}}
\lesssim \big\{\varepsilon^{\theta(\tau-1)}
+ \varepsilon^{(1-\alpha)(1-\tau)+\beta}\big\}
\Big(\int_{\Omega}|\nabla u_0|^q dx\Big)^{\frac{1}{q}}
\end{aligned}
\end{equation*}
in the case of $2\leq p<\infty$. This requires the condition
$1<\tau<(1+\beta-\alpha)/(1-\alpha)$.
We proceed to show the second line of $\eqref{pri:3.11}$, and
it follows from the estimate $\eqref{pri:2.15}$ that
\begin{equation*}
\begin{aligned}
&\Big(\int_\Omega\big|D^h_i(\varepsilon N(y,\varphi_\varepsilon))
-\partial_iN(y,\varphi_\varepsilon)\big|^pdx\Big)^{\frac{1}{p}} \\
&\lesssim \varepsilon^{\theta(\tau-1)}
\Big(\int_{\Omega}|\nabla u_0|^p dx\Big)^{\frac{1}{p}}
+ \varepsilon^{1-(1-\alpha)\tau}
\Big(\int_{\Sigma_{4\varepsilon}}|\nabla u_0|^{\frac{p(2-p)}{3-p}}
|D_i^h(\psi_{4\varepsilon}\nabla u_0)|^{\frac{p}{3-p}}
dx\Big)^{\frac{1}{p}}\\
&\lesssim \varepsilon^{\theta(\tau-1)}
\Big(\int_{\Omega}|\nabla u_0|^q dx\Big)^{\frac{1}{q}}
+ \varepsilon^{1-(1-\alpha)\tau}
\Big(\int_{\Omega}|\nabla u_0|^{p}dx\Big)^{\frac{2-p}{p(3-p)}}
\Big(\int_{\Sigma_{3\varepsilon}}
|D_i^h(\psi_{4\varepsilon}\nabla u_0)|^{p}
dx\Big)^{\frac{1}{p(3-p)}}.
\end{aligned}
\end{equation*}
By a similar argument, there holds
\begin{equation}\label{f:3.22}
\begin{aligned}
\Big(\int_{\Sigma_{3\varepsilon}}
|D_i^h(\psi_{4\varepsilon}\nabla u_0)|^{p}
dx\Big)^{\frac{1}{p(3-p)}}
&\lesssim \bigg\{
\varepsilon^{-\frac{1}{3-p}}
\Big(\int_{\Omega\setminus\Sigma_{9\varepsilon}}|\nabla u_0|^{p}dx\Big)^{\frac{1}{p(3-p)}}
+\Big(\int_{\Sigma_{3\varepsilon}}|D_i^h(\nabla u_0)|^pdx\Big)^{\frac{1}{p(3-p)}}\bigg\}\\
&\lesssim \varepsilon^{(1/p-1/q-1)/(3-p)}
\Big(\int_{\Omega}|\nabla u_0|^qdx\Big)^{\frac{1}{q(3-p)}},
\end{aligned}
\end{equation}
where we use the estimate $\eqref{pri:3.10}$ in the second inequality.
Noting that $(1/p-1/q-1)/(3-p) = \alpha(\beta-1)$, we consequently obtain
\begin{equation*}
\Big(\int_\Omega\big|D^h_i(\varepsilon N(y,\varphi_\varepsilon))
-\partial_iN(y,\varphi_\varepsilon)\big|^pdx\Big)^{\frac{1}{p}}
\lesssim \big\{\varepsilon^{\theta(\tau-1)}
+\varepsilon^{(1-\alpha)(1-\tau)+\alpha\beta}\big\}
\Big(\int_{\Omega}|\nabla u_0|^qdx\Big)^{\frac{1}{q}}
\end{equation*}
with $1<\tau<\frac{1-\alpha+\alpha\beta}{1-\alpha}$, where $\beta=1/p-1/q$.
We have completed the proof.
\end{proof}

\begin{lemma}
Assume the same conditions as in Lemma $\ref{lemma:2.8}$.
Let $\varphi_\varepsilon = S_{\varepsilon}(\psi_{4\varepsilon}\nabla u_0)$,
and $\beta = 1/p-1/q$ with some $q>p$, then we have
\begin{equation}\label{pri:3.15}
\begin{aligned}
&\quad\Big(\int_\Omega\big|D^{-h}_j(\varepsilon E_{ji}(y,\varphi_\varepsilon))
-\partial_jE_{ji}(y,\varphi_\varepsilon)\big|^{\frac{p}{p-1}}dx\Big)^{\frac{p-1}{p}} \\
&\lesssim \left\{\begin{aligned}
&\big\{\varepsilon^{\rho(\tau-1)} + \varepsilon^{(1-\gamma)(1-\tau)+\beta}\big\}
\Big(\int_{\Omega}|\nabla u_0|^qdx\Big)^{\frac{p-1}{q}}
&\text{if~}& 2< p<\infty;\\
&\big\{\varepsilon^{\rho(\tau-1)}+\varepsilon^{(1-\gamma)(1-\tau)+\gamma\beta}\big\}
\Big(\int_{\Omega}|\nabla u_0|^qdx\Big)^{\frac{p-1}{q}} &\text{if~}& 1<p<2,
\end{aligned}\right.
\end{aligned}
\end{equation}
where $1<\tau<(1-\gamma+\beta)/(1-\gamma)$ if $2<p<\infty$, and
$1<\tau<(1-\gamma+\gamma\beta)/(1-\gamma)$ if $1<p<2$. Here the up to
constant depends on $\tilde{\mu}_0, \tilde{\mu}_1, \mu_0, \mu_1, \mu_2, p, d$ and $r_0$.
\end{lemma}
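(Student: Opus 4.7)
The plan is to mimic the proof of $\eqref{pri:3.11}$, replacing the corrector $N$ with the flux corrector $E$ and using the analogue $\eqref{pri:2.13}$ of $\eqref{pri:2.15}$. Setting $r=p/(p-1)$ and $\varphi=\varphi_\varepsilon$ in $\eqref{pri:2.13}$, and noting that $r(p-1)=p$, the estimate splits the left-hand side of $\eqref{pri:3.15}$ into a principal term controlled by $\varepsilon^{\rho(\tau-1)}\|\varphi_\varepsilon\|_{L^p(\Omega)}^{p-1}$ and a correction controlled by
\[
\varepsilon^{1-(1-\gamma)\tau}\Big(\int_\Omega|\varphi_\varepsilon|^{a}|D_j^h\varphi_\varepsilon|^{b}\,dx\Big)^{(p-1)/p},
\]
where $(a,b)=\bigl((p-2)(1+p)/(p-1),\,2/(p-1)\bigr)$ when $p>2$ and $(a,b)=\bigl((2-p)(p-1)/(3-p),\,(p-1)/(3-p)\bigr)$ when $1<p<2$; in both cases one checks $a+b=p$.

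For the principal term, the boundedness of the smoothing operator $\eqref{pri:2.6}$ together with H\"older's inequality on the bounded domain $\Omega$ give $\|\varphi_\varepsilon\|_{L^p(\Omega)}\lesssim\|\nabla u_0\|_{L^q(\Omega)}$, producing the contribution $\varepsilon^{\rho(\tau-1)}\|\nabla u_0\|_{L^q(\Omega)}^{p-1}$. For the correction, I first replace $\varphi_\varepsilon$ and $D_j^h\varphi_\varepsilon$ by $\psi_{4\varepsilon}\nabla u_0$ and $D_j^h(\psi_{4\varepsilon}\nabla u_0)$ on $\Sigma_{3\varepsilon}$ (using the support property $\eqref{f:3.13}$ and the boundedness of $S_\varepsilon$, exactly as in the derivation of $\eqref{f:3.21}$ and $\eqref{f:3.22}$), and then expand
\[
D_j^h(\psi_{4\varepsilon}\nabla u_0)(x)=D_j^h\psi_{4\varepsilon}(x)\,\nabla u_0(x+he_j)+\psi_{4\varepsilon}(x+he_j)\,D_j^h(\nabla u_0)(x).
\]
This splits the correction into a layer part supported on $\Omega\setminus\Sigma_{9\varepsilon}$ (where $|D_j^h\psi_{4\varepsilon}|\lesssim 1/\varepsilon$), bounded via $\eqref{pri:3.12}$, and a co-layer part on $\Sigma_{3\varepsilon}$.

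In the co-layer part for $p>2$, the key algebraic device is the factorization
\[
|\nabla u_0|^{a}\,|D_j^h(\nabla u_0)|^{b}=\Bigl[|\nabla u_0|^{p-2}|D_j^h(\nabla u_0)|^{2}\Bigr]^{1/(p-1)}\,|\nabla u_0|^{p(p-2)/(p-1)},
\]
so that H\"older's inequality with exponents $p-1$ and $(p-1)/(p-2)$ reduces the integral to a product of $\eqref{pri:3.13}$ and the global $L^p$-bound on $\nabla u_0$, ultimately yielding an $\varepsilon$-power $-(1+p/q)/(p-1)$; raising to $(p-1)/p$ turns this into $\varepsilon^{-1/p-1/q}=\varepsilon^{-\gamma+\beta}$, so combining with $\varepsilon^{1-(1-\gamma)\tau}$ gives exactly the claimed exponent $(1-\gamma)(1-\tau)+\beta$. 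For $1<p<2$ the exponents $a,b$ split cleanly through H\"older with exponents $p/a$ and $p/b$, directly invoking $\eqref{pri:3.14}$ and the global $L^p$-bound; tracking $\beta=1/p-1/q$ and $\gamma=(p-1)/(3-p)$ as in $\eqref{notation:1}$, the correction exponent simplifies to $(1-\gamma)(1-\tau)+\gamma\beta$. The stated $\tau$-range appears automatically to guarantee both positivity of the final $\varepsilon$-power and the applicability of $\eqref{pri:2.13}$ (i.e., $\tau<1/(1-\gamma)$).

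The main obstacle, beyond bookkeeping, is arranging the $p>2$ co-layer H\"older split so that the term $|D_j^h(\nabla u_0)|^{b}$ with $b=2/(p-1)<2$ is matched with the only available second-derivative bound $\eqref{pri:3.13}$: the factorization above is essentially the unique decomposition that reproduces $|\nabla u_0|^{p-2}|D_j^h(\nabla u_0)|^{2}$ as a factor, and the appearance of $\gamma=2/p$ in the final exponent reflects precisely this identification. Verifying that the layer contribution from $D_j^h\psi_{4\varepsilon}$ is dominated by the co-layer contribution, and that the reduction $\varphi_\varepsilon\rightsquigarrow\psi_{4\varepsilon}\nabla u_0$ on $\Sigma_{3\varepsilon}$ is harmless, will both be routine once the decomposition is in place.
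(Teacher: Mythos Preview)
Your approach is exactly what the paper does: it states only that the proof is similar to that of the previous lemma (the one yielding $\eqref{pri:3.11}$) and omits all details, so your plan of applying $\eqref{pri:2.13}$ with $r=p/(p-1)$ and then running the layer/co-layer machinery of $\eqref{pri:3.12}$--$\eqref{pri:3.14}$ is precisely the intended argument. One small slip: for $1<p<2$ your stated pair $(a,b)=\bigl((2-p)(p-1)/(3-p),\,(p-1)/(3-p)\bigr)$ sums to $p-1$, not $p$; the correct exponents coming from $\eqref{pri:2.13}$ with $r=p/(p-1)$ are $a=p(2-p)/(3-p)$ and $b=p/(3-p)$, after which your H\"older split with exponents $p/a$, $p/b$ and the subsequent exponent tracking go through verbatim.
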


\begin{proof}
The proof is similar to that given in the previous lemma. and we omit it.
\end{proof}

\begin{remark}
\emph{Let $p=2$,
and $\beta = 1/2-1/q$ with some $q>2$. Then we have
\begin{equation}\label{pri:3.17}
\begin{aligned}
\int_{\Omega}\Big|\frac{\partial}{\partial\xi_k}
\big[E_{ji}(x/\varepsilon,\varphi_\varepsilon)\big]\frac{\partial}{\partial x_j}
\big[(\varphi_{\varepsilon})_k\big]K_i\Big|dx
\lesssim
\varepsilon^{-\frac{1}{2}(1+\frac{2}{q})} \Big(\int_{\Omega}|\nabla u_0|^qdx\Big)^{1/q}
\Big(\int_{\Omega}|K|^2dx\Big)^{1/2},
\end{aligned}
\end{equation}
where we just note that $|\nabla_{\xi_k} E_{ji}(y,\cdot)|\leq C$ for a.e. $y\in\mathbb{R}^d$
by the estimate $\eqref{pri:2.5}$.
The same result might be found in \cite[Theorem 3.4]{WXZ1}.}
\end{remark}
\subsection{Proof of Theorem $\ref{thm:1.1}$}
The estimate $\eqref{pri:1.3}$ has already been proved in Lemma $\ref{lemma:2.6}$.
Here we just focus on the error estimates stated in the theorem.
For the ease of the statement. Let $K = \nabla u_\varepsilon - V$ as in $\eqref{eq:3.4}$, and
$\tilde{K} = K/\|K\|_{L^p(\Omega)}$.
The proof is divided into two parts. One is to handle the singular case ($1<p<2$). The other is
to deal with the degenerated case ($2<p<\infty$).

Part 1 ($1<p<2$). We show the estimate $\eqref{pri:1.5}$, and the proof mainly consists of three steps
according to Lemma $\ref{lemma:3.4}$.
The first step is to give the estimate
\begin{equation}\label{f:3.25}
\begin{aligned}
&\quad\int_{\Omega}|\widehat{A}(\nabla u_0) - \widehat{A}(\varphi_\varepsilon)||\tilde{K}|dx \\
&\lesssim |\Omega_{M,\varepsilon}|^{\beta(p-1)}\|\nabla u_0\|_{L^q(\Omega_{M,\varepsilon})}^{p-1}
+ \Big\{\varepsilon^{\beta(p-1)} + \varepsilon^{\beta\gamma}\Big\}
\|\nabla u_0\|_{L^q(\Omega)}^{p-1}\\
&\lesssim |\Omega_{M,\varepsilon}|^{\beta(p-1)}
\|\nabla u_0\|_{L^q(\Omega_{M,\varepsilon})}^{p-1}
+ \varepsilon^{\gamma\beta}\|\nabla u_0\|_{L^q(\Omega)}^{p-1},
\end{aligned}
\end{equation}
whose first inequality follows from the second line of $\eqref{pri:3.6}$ coupled with
the layer and co-layer estimates $\eqref{pri:3.12}$ and
$\eqref{pri:3.14}$.

Then the second step is to show
\begin{equation}\label{f:3.26}
\begin{aligned}
\bigg|\int_{\Omega}\big[\widehat{A}(\varphi_\varepsilon)
-A(y,P(y,\varphi_\varepsilon))\big]\cdot \tilde{K}dx\bigg|
&\lesssim \Big\{\varepsilon^{\rho(\tau-1)}
+\varepsilon^{(1-\gamma)(1-\tau)+\gamma\beta}\Big\}
\|\nabla u_0\|_{L^q(\Omega)}^{p-1}\\
&\lesssim \max \Big\{\varepsilon^{\rho(\tau-1)},
\varepsilon^{(1-\gamma)(1-\tau)+\gamma\beta}\Big\}
\|\nabla u_0\|_{L^q(\Omega)}^{p-1}
\end{aligned}
\end{equation}
where $\rho = \theta(p-1)$ (defined in Lemma $\ref{lemma:2.3}$), and we employ the estimates $\eqref{pri:3.5}$ and $\eqref{pri:3.15}$.

In the third step,
combining the estimates $\eqref{pri:3.8}$ and $\eqref{pri:3.11}$ together with
$\eqref{pri:3.12}$ and $\eqref{pri:3.14}$ leads to
\begin{equation}\label{f:3.27}
\begin{aligned}
&\bigg|\int_\Omega
\big[A(y,P(y,\varphi_\varepsilon))
-A(y,V)\big]\cdot \tilde{K}dx\bigg|\\
&\lesssim
\Big\{\varepsilon^{\beta(p-1)} + \varepsilon^{\theta(\tau-1)(p-1)}
+\varepsilon^{(1-\alpha)(1-\tau)(p-1)+\gamma\beta}\Big\}
\|\nabla u_0\|_{L^q(\Omega)}^{p-1} \\
&\lesssim
\max\Big\{\varepsilon^{(1-\alpha)(1-\tau)(p-1)+\gamma\beta},
\varepsilon^{\theta(\tau-1)(p-1)}\Big\}
\|\nabla u_0\|_{L^q(\Omega)}^{p-1},
\end{aligned}
\end{equation}
in which
the second inequality is due to the fact $\beta(p-1) > (1-\alpha)(1-\tau)(p-1)
+\gamma\beta$.

Hence, in view of the estimates $\eqref{f:3.26}$ and $\eqref{f:3.27}$,
the exact range of $\tau$ might be figured out by the following three cases:
\begin{enumerate}
  \item[(a)] \quad$0<\rho(\tau-1)< (1-\gamma)(1-\tau)+\gamma\beta< (1-\alpha)(1-\tau)(p-1)+\gamma\beta$;
  \item[(b)] \quad$0<(1-\gamma)(1-\tau)+\gamma\beta<\rho(\tau-1)< (1-\alpha)(1-\tau)(p-1)+\gamma\beta$;
  \item[(c)] \quad$0<(1-\gamma)(1-\tau)+\gamma\beta< (1-\alpha)(1-\tau)(p-1)+\gamma\beta<\rho(\tau-1)$.
\end{enumerate}
In the case (a), it follows from $\eqref{f:3.25}$, $\eqref{f:3.26}$,
$\eqref{f:3.27}$ and $\eqref{pri:3.16}$ that
\begin{equation*}
 \|K\|_{L^p(\Omega)}^{p-1}
\leq |\Omega_{M,\varepsilon}|^{\beta(p-1)}
\|\nabla u_0\|_{L^q(\Omega_{M,\varepsilon})}^{p-1}
+ \varepsilon^{\theta(1-\tau)(p-1)}\|\nabla u_0\|_{L^q(\Omega)}^{p-1},
\end{equation*}
and this gives the stated estimate $\eqref{pri:1.5}$. Meanwhile, it is not hard to see
that the range $\eqref{range:1}$ follows from the relationship (a).
By the same token, we can derive the estimate $\eqref{pri:1.4}$ on account of the cases (b) and (c).
However, as stated in $\eqref{range:2}$,
we have to pay more careful on the computations for the corresponding range of $\tau$.
From (b), it follows that
\begin{equation}\label{f:3.37}
\frac{1-\gamma + \gamma\beta + \rho}{1-\gamma+\rho}
< \tau< \min\Big\{\frac{1-\alpha+\alpha\beta+\theta}
{1-\alpha+\theta},\frac{1-\gamma+\gamma\beta}{1-\gamma}\Big\},
\end{equation}
and keep the above inequality for a while. In terms of (c), we may have
\begin{equation}\label{f:3.38}
\left\{\begin{aligned}
&1
< \tau < \frac{1-\gamma+\gamma\beta}{1-\gamma};\\
&\frac{1-\alpha+\alpha\beta+\theta}
{1-\alpha+\theta}<\tau <\frac{1-\alpha+\alpha\beta}
{1-\alpha},
\end{aligned}\right.
\quad\Longrightarrow\quad
\frac{1-\alpha+\alpha\beta+\theta}
{1-\alpha+\theta}
< \tau< \frac{1-\gamma+\gamma\beta}{1-\gamma},
\end{equation}
under the condition $\frac{2+\delta}{1+\delta}<p<2$, since such the condition guarantees
the following inequality
\begin{equation*}
\frac{1-\alpha+\alpha\beta+\theta}
{1-\alpha+\theta}
< \frac{1-\gamma+\gamma\beta}{1-\gamma}.
\end{equation*}
Recalling $\eqref{f:3.37}$, we have the range
\begin{equation*}
\frac{1-\gamma + \gamma\beta + \rho}{1-\gamma+\rho}
< \tau<  \frac{1-\alpha+\alpha\beta+\theta}
{1-\alpha+\theta}.
\end{equation*}
under the same condition, and this together with $\eqref{f:3.38}$ gives
$\eqref{range:2}$. For the case $1<p\leq \frac{2+\delta}{1+\delta}$,
the stated range $\eqref{range:2}$ also follows from $\eqref{f:3.37}$, and in fact
(c) can not happen in such the case. The proof of Part 1 is done.

%

Part 2. In this part ($2<p<\infty$), we will address the estimate $\eqref{pri:1.6}$ and
the range $\eqref{range:3}$ of $\tau$.  We first show that
\begin{equation}\label{f:3.29}
\begin{aligned}
 \int_{\Omega}|\widehat{A}(\nabla u_0) - \widehat{A}(\varphi_\varepsilon)||\tilde{K}|dx
&\lesssim |\Omega_{M,\varepsilon}|^{\beta(p-1)}
\|\nabla u_0\|_{L^q(\Omega_{M,\varepsilon})}^{p-1}
+ \big\{\varepsilon^{\beta(p-1)}+\varepsilon^{\beta/\alpha}\big\}
\|\nabla u_0\|_{L^q(\Omega)}^{p-1}\\
&\lesssim |\Omega_{M,\varepsilon}|^{\beta(p-1)}
\|\nabla u_0\|_{L^q(\Omega_{M,\varepsilon})}^{p-1}
+ \varepsilon^{\beta/\alpha}
\|\nabla u_0\|_{L^q(\Omega)}^{p-1}
\end{aligned}
\end{equation}
where we use the first line of $\eqref{pri:3.6}$ coupled with the layer and co-layer type estimates
$\eqref{pri:3.12}$,$\eqref{pri:3.13}$.
Then, we employ
the first line of $\eqref{pri:3.5}$ and  $\eqref{pri:3.15}$ to obtain
\begin{equation}\label{f:3.41}
\begin{aligned}
&\quad\bigg|\int_{\Omega}\big[\widehat{A}(\varphi_\varepsilon)
-A(y,P(y,\varphi_\varepsilon))\big]\cdot \tilde{K}dx\bigg|\\
&\lesssim \varepsilon^{\theta(\tau-1)}
\Big(\int_{\Omega}|\nabla u_0|^qdx\Big)^{\frac{p-1}{q}}
+\varepsilon^{(1-\gamma)(1-\tau)+\beta}\Big(\int_{\Omega}|\nabla u_0|^qdx\Big)^{\frac{p-1}{q}}\\
&\lesssim \varepsilon^{(1-\gamma)(1-\tau)+\beta}
\Big(\int_{\Omega}|\nabla u_0|^qdx\Big)^{\frac{p-1}{q}}
\end{aligned}
\end{equation}
under the condition $\eqref{f:3.32}$, where we note that $\gamma = \alpha$ in the case of $p>2$.
Furthermore, let
\begin{equation*}
 R_{M,\varepsilon} := |\Omega_{M,\varepsilon}|^{\beta(p-1)}
\|\nabla u_0\|_{L^q(\Omega_{M,\varepsilon})}^{p-1},
\end{equation*}
and it follows from the estimates $\eqref{pri:3.7}$  and $\eqref{pri:3.11}$ that
\begin{equation}\label{f:3.28}
\begin{aligned}
&\quad\bigg|\int_\Omega
\big[A(y,P(y,\varphi_\varepsilon))
-A(y,V)\big]\cdot \tilde{K}dx\bigg|\\
&\lesssim  \Big\{\varepsilon^{\beta/\alpha}+ \varepsilon^{1-\alpha+\beta}
+ \varepsilon^{(p-2)\theta(\tau-1)}
+\varepsilon^{(p-2)[(1-\alpha)(1-\tau)+\beta]}\Big\}
\|\nabla u_0\|_{L^q(\Omega)}^{p-1} + R_{M,\varepsilon}\\
&\lesssim \Big\{\varepsilon^{\beta/\alpha} + \varepsilon^{1-\alpha+\beta}
+\varepsilon^{(p-2)[(1-\alpha)(1-\tau)+\beta]}\Big\}
\|\nabla u_0\|_{L^q(\Omega)}^{p-1} +R_{M,\varepsilon}\\
&\lesssim
\varepsilon^{[(1-\alpha)(1-\tau)+\beta](p-2)}
\|\nabla u_0\|_{L^q(\Omega)}^{p-1} + R_{M,\varepsilon},
\end{aligned}
\end{equation}
where we also use the estimates $\eqref{pri:3.12}$,$\eqref{pri:3.13}$ in the first inequality, and
the fact that
\begin{equation}\label{f:3.32}
  0<(1-\alpha)(1-\tau)+\beta <\theta(\tau-1) \quad\Longleftrightarrow\quad
\frac{1-\alpha+\beta+\theta}{1-\alpha+\theta}<\tau
<\frac{1-\alpha+\beta}{1-\alpha}
\end{equation}
in the second one.
The last inequality of $\eqref{f:3.28}$ is due to
\begin{equation*}
[(1-\alpha)(1-\tau)+\beta](p-2)<\beta/\alpha < 1-\alpha+\beta
\end{equation*}
whose second inequality follows from the fact $0<\beta<\alpha$. Here we also point out that
the estimate $\eqref{f:3.28}$ merely holds for the case of $2<p<3$. Thus, the estimate
$\eqref{pri:3.16}$ together with the estimates $\eqref{f:3.29}$, $\eqref{f:3.41}$ and $\eqref{f:3.28}$
implies that
\begin{equation*}
 \|K\|_{L^p(\Omega)}
\leq \varepsilon^{(\frac{p-2}{p-1})[(1-\alpha)(1-\tau)+\beta]}
\|\nabla u_0\|_{L^q(\Omega)} + |\Omega_{M,\varepsilon}|^{\beta}
\|\nabla u_0\|_{L^q(\Omega_{M,\varepsilon})},
\end{equation*}
which is exactly the first line of $\eqref{pri:1.6}$. Concerning the case $p\geq 3$,
we merely make a few modifications on the estimate $\eqref{f:3.28}$,
which is
\begin{equation}\label{f:3.33}
\begin{aligned}
&\quad\bigg|\int_\Omega
\big[A(y,P(y,\varphi_\varepsilon))
-A(y,V)\big]\cdot \tilde{K}dx\bigg|\\
&\lesssim
\Big\{\varepsilon^{\beta/\alpha} + \varepsilon^{1-\alpha+\beta}
+\varepsilon^{\theta(\tau-1)}
+\varepsilon^{(1-\alpha)(1-\tau)+\beta}\Big\}
\|\nabla u_0\|_{L^q(\Omega)}^{p-1} + R_{M,\varepsilon} \\
&\lesssim\varepsilon^{(1-\alpha)(1-\tau)+\beta}
\|\nabla u_0\|_{L^q(\Omega)}^{p-1} + R_{M,\varepsilon},
\end{aligned}
\end{equation}
where we use the estimates $\eqref{pri:3.7}$ and $\eqref{pri:3.11}$ in the case of
$p\geq 3$, coupled with the layer and co-layer type estimates $\eqref{pri:3.12}$ and
$\eqref{pri:3.13}$. Also, the condition $\eqref{f:3.32}$ and the fact $0<\beta<\alpha$ guarantee
the second inequality of $\eqref{f:3.33}$. Thus, combining the estimates
$\eqref{pri:3.16}$, $\eqref{f:3.33}$ and
$\eqref{f:3.29}$ leads to the second line of the stated estimate $\eqref{pri:1.6}$. We end
this part by mention that $\eqref{f:3.32}$ is exactly the stated range $\eqref{range:3}$.

In the end, we address the special case $p=2$, and proceed as in the proof of Part 1.
The first step is given by
\begin{equation*}
\begin{aligned}
\int_{\Omega}|\widehat{A}(\nabla u_0) - \widehat{A}(\varphi_\varepsilon)||\tilde{K}|dx
\lesssim
\varepsilon^{\beta}\|\nabla u_0\|_{L^q(\Omega)},
\end{aligned}
\end{equation*}
where we use the first line of the estimate $\eqref{pri:3.6}$ coupled with
the estimates $\eqref{pri:3.12}$ and $\eqref{pri:3.13}$. Then, it follows from
the estimates $\eqref{pri:3.5}$ and $\eqref{pri:3.17}$ that
\begin{equation*}
\bigg|\int_{\Omega} \big[\widehat{A}(\varphi_\varepsilon)
- A(y,P(y,\varphi_\varepsilon))\big]
\cdot \tilde{K} dx\bigg| \lesssim \varepsilon^{\beta}\|\nabla u_0\|_{L^q(\Omega)}.
\end{equation*}
Also, in view of the estimates $\eqref{pri:3.8}$ and $\eqref{pri:2.20}$, we consequently have
\begin{equation*}
\begin{aligned}
\quad\bigg|\int_\Omega
\big[A(y,P(y,\varphi_\varepsilon))
-A(y,V)\big]\cdot \tilde{K}dx\bigg|
\lesssim\varepsilon^{\beta}
\|\nabla u_0\|_{L^q(\Omega)}^{p-1},
\end{aligned}
\end{equation*}
and the above three estimates together with Lemma $\ref{lemma:3.4}$ gives the desired estimate.
We end the proof by mention that, in such the case $p=2$, the size of $h$
might be independent of $\varepsilon$.
\qed

%

\section{Large scale estimates}\label{sec:4}

\subsection{An approximating argument}

\begin{lemma}[Caccioppoli's inequality]\label{lemma:4.2}
Let $1<p<\infty$. Suppose $A$ satisfies $\eqref{a:1}$ and $\eqref{c:1}$, and
$u_\varepsilon\in W^{1,p}_{loc}(\Omega)$ is a weak solution of
$\mathcal{L}_\varepsilon u_\varepsilon = 0$ in $\Omega$, then for any
$B(x,r)\subset\Omega$, there holds
\begin{equation}\label{pri:4.2}
 \Big(\dashint_{B(x,r)}|\nabla u_\varepsilon|^p\Big)^{1/p}
\leq \frac{C}{r}\Big(\dashint_{B(x,2r)}|u_\varepsilon-c|^p\Big)^{1/p}
\end{equation}
for any constant $c\in\mathbb{R}$, where $C$ depends on $\mu_0,\mu_1,p$ and $d$.
\end{lemma}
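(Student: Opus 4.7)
The plan is to run the classical Caccioppoli argument, testing the equation against the gradient-cutoff multiple of the solution and absorbing the bad term via Young's inequality. Fix a cutoff $\eta\in C_0^{\infty}(B(x,2r))$ with $0\le\eta\le 1$, $\eta\equiv 1$ on $B(x,r)$, and $|\nabla\eta|\le C/r$. Since $\varphi:=\eta^{p}(u_{\varepsilon}-c)$ lies in $W_{0}^{1,p}(B(x,2r))$, it is a legitimate test function, and the weak formulation of $\mathcal{L}_{\varepsilon}u_{\varepsilon}=0$ reads
\begin{equation*}
\int_{B(x,2r)}\bigl\langle A(x/\varepsilon,\nabla u_{\varepsilon}),\,\eta^{p}\nabla u_{\varepsilon}+p\eta^{p-1}(u_{\varepsilon}-c)\nabla\eta\bigr\rangle\,dx=0,
\end{equation*}
which after rearranging yields
\begin{equation*}
\int \eta^{p}\bigl\langle A(x/\varepsilon,\nabla u_{\varepsilon}),\nabla u_{\varepsilon}\bigr\rangle\,dx = -p\int \eta^{p-1}(u_{\varepsilon}-c)\bigl\langle A(x/\varepsilon,\nabla u_{\varepsilon}),\nabla\eta\bigr\rangle\,dx.
\end{equation*}

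Next I would extract the structural bounds. The homogeneity $\eqref{a:1}$ forces $A(y,0)=0$ (by letting $t\to 0^{+}$), so that the left inequality in $\eqref{c:1}$ with $\xi'=0$ gives the coercivity $\langle A(y,\xi),\xi\rangle\ge\mu_{0}|\xi|^{p}$. Together with the growth bound $|A(y,\xi)|\le C|\xi|^{p-1}$ (which follows from $\eqref{a:1}$ combined with the boundedness of $A(y,\cdot)$ on the unit sphere; this in turn uses $\eqref{a:4}$ and periodicity to propagate a one-point bound uniformly in $y$), the above identity becomes
\begin{equation*}
\mu_{0}\int \eta^{p}|\nabla u_{\varepsilon}|^{p}\,dx \le C\int \eta^{p-1}|\nabla u_{\varepsilon}|^{p-1}\,|u_{\varepsilon}-c|\,|\nabla\eta|\,dx.
\end{equation*}

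In the final step I would apply Young's inequality with conjugate exponents $p/(p-1)$ and $p$ to the pair $\bigl(\eta^{p-1}|\nabla u_{\varepsilon}|^{p-1}\bigr)\cdot\bigl(|u_{\varepsilon}-c|\,|\nabla\eta|\bigr)$, yielding, for every $\delta>0$,
\begin{equation*}
C\int \eta^{p-1}|\nabla u_{\varepsilon}|^{p-1}|u_{\varepsilon}-c||\nabla\eta|\,dx \le \delta\int \eta^{p}|\nabla u_{\varepsilon}|^{p}\,dx + C(\delta)\int |u_{\varepsilon}-c|^{p}|\nabla\eta|^{p}\,dx.
\end{equation*}
Choosing $\delta=\mu_{0}/2$ absorbs the gradient term on the left, and then $|\nabla\eta|\le C/r$ together with $\eta\equiv 1$ on $B(x,r)$ and the comparability $|B(x,2r)|\approx|B(x,r)|\approx r^{d}$ delivers the averaged bound $\eqref{pri:4.2}$.

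The one delicate point is establishing the growth estimate $|A(y,\xi)|\le C|\xi|^{p-1}$: the monotonicity $\eqref{c:1}$ by itself only controls the pairing $\langle A(y,\xi),\xi\rangle$, not the full vector norm. The clean route is to combine the Lipschitz regularity $\eqref{a:4}$, periodicity, and the homogeneity $A(y,t\xi)=t^{p-1}A(y,\xi)$: continuity in $y$ plus the compactness of $Y$ and of the unit sphere bounds $|A(y,e)|$ uniformly in $(y,e)$, and homogeneity then scales this to the desired growth. Once this bound is in hand, every remaining step is routine and the proof goes through as above.
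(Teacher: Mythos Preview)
Your argument is correct and essentially identical to the paper's: the same test function $\eta^{p}(u_\varepsilon-c)$, the same coercivity $\langle A(y,\xi),\xi\rangle\ge\mu_0|\xi|^p$ (from \eqref{c:1} with $\xi'=0$ together with $A(y,0)=0$), the same growth bound, and the same absorption. The paper applies H\"older's inequality first and then Young's inequality, whereas you apply Young pointwise; these are equivalent.

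The only point on which you diverge is the justification of the growth bound $|A(y,\xi)|\le C|\xi|^{p-1}$. The paper simply asserts $|A(y,\xi)|\le\mu_1|\xi|^{p-1}$ as a consequence of \eqref{a:1} and \eqref{c:1}; indeed, throughout the paper the right-hand inequality in \eqref{c:1} is used as though it were the full norm bound $|A(y,\xi)-A(y,\xi')|\le\mu_1(|\xi|+|\xi'|)^{p-2}|\xi-\xi'|$ (see e.g.\ the proof of Lemma~\ref{lemma:2.3}), so the intended reading of \eqref{c:1} already delivers the growth directly. Your route via \eqref{a:4}, periodicity, homogeneity and compactness is a legitimate alternative, but be aware it invokes a hypothesis not listed in the lemma's statement; under the paper's reading of \eqref{c:1} this detour is unnecessary.
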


\begin{proof}
On account of the assumption $\eqref{a:1}$ and $\eqref{c:1}$, there hold
\begin{equation}\label{f:4.7}
 \big<A(y,\xi),\xi\big>\geq \mu_0|\xi|^p;
\qquad
\big|A(y,\xi)\big|\leq \mu_1|\xi|^{p-1}
\qquad \forall y,\xi\in\mathbb{R}^d.
\end{equation}
By definition of the weak solution, we have
\begin{equation*}
  \int_{\Omega}A(x/\varepsilon,\nabla u_\varepsilon)\nabla\varphi dx = 0
\qquad\quad\forall \varphi\in W_0^{1,p}(\Omega).
\end{equation*}
Let $\varphi = \psi_{r}^p(u_\varepsilon-c)$, where $\psi_r\in C_0^1(\Omega)$ is a cut-off function
satisfying
$\psi_r = 1$ in $B(x,r)$ and $\psi_r = 0$ outside $B(x,2r)$ with $|\nabla \psi_r|\leq C/r$.
Thus, a routine computation leads to
\begin{equation*}
\begin{aligned}
0&=\int_{\Omega}\psi_r^p A(x/\varepsilon,\nabla u_\varepsilon)\nabla u_\varepsilon dx
+ p\int_{\Omega}\psi_r^{p-1}(u_\varepsilon-c)A(x/\varepsilon,\nabla u_\varepsilon)\nabla\psi_r dx\\
&\geq \mu_0\int_{\Omega}\psi_r^p|\nabla u_\varepsilon|^p dx
- \mu_1p\Big(\int_{\Omega}\psi_r^p|\nabla u_\varepsilon|^pdx\Big)^{1-\frac{1}{p}}
\Big(\int_{\Omega}|\nabla\psi_r|^p|u_\varepsilon-c|^pdx\Big)^{\frac{1}{p}}\\
&\geq \frac{\mu_0}{2}\int_{\Omega}\psi_r^p|\nabla u_\varepsilon|^p dx
- C(\mu_0,\mu_1,p,d)\int_{\Omega}|\nabla\psi_r|^p|u_\varepsilon-c|^pdx,
\end{aligned}
\end{equation*}
where we use Young's inequality in the last step.
This implies the stated estimate $\eqref{pri:4.2}$ and we may end the proof here.
\end{proof}

\begin{remark}\label{remark:4.1}
\emph{The above estimate in fact implies the so-called Meyer's estimate.
There exists a small number $\delta\in (0,1)$, such that
\begin{equation}\label{pri:4.3}
 \Big(\dashint_{B(x,r)}|\nabla u_\varepsilon|^{p(1+\delta)}\Big)^{\frac{1}{p(1+\delta)}}
\lesssim \Big(\dashint_{B(x,2r)}|\nabla u_\varepsilon|^{p}\Big)^{\frac{1}{p}}.
\end{equation}
It is also known as a self-improvement property, and relies on the reverse H\"older inequality
developed by M. Giaquinta and G. Modica (see for example \cite[Theorem 6.38]{MGLM}).
We usually choose $q=(1+\delta)p$ in the paper.}

\emph{In particular, it has been known from Lemma $\ref{lemma:2.5}$ that $\widehat{A}$ admits the same property as $\eqref{f:4.7}$.
Given $g\in W^{1-1/q,q}(\partial\Omega)$, suppose
that $u_0\in W^{1,p}(\Omega)$ is a weak solution of $\mathcal{L}_0 u_0 = 0$ in $\Omega$
and $u_0 = g$ on $\partial\Omega$. Then one may have $|\nabla u_0|\in L^q(\Omega)$, and
\begin{equation}\label{f:4.8}
  \|\nabla u_0\|_{L^q(\Omega)}\leq C\|g\|_{W^{1-1/q,q}(\partial\Omega)},
\end{equation}
where $C$ depends on $\mu_0,\mu_1,p,d$ and $\Omega$. We refer the reader
to \cite[Theorem 2.13]{WXZ1} or \cite[Lemma 4.1]{F} for
an analogy argument for the proof, which reduces the problem to establish the corresponding
boundary Caccioppoli's inequality, and the details is left to the reader.}
\end{remark}

\begin{lemma}[approximating lemma]\label{lemma:4.1}
Let $1<p<\infty$, and $B=B(0,R)\subset\Omega$ with $\varepsilon\leq R\leq 1$. Assume the same conditions as in Theorem $\ref{thm:1.2}$.
Suppose that $u_\varepsilon\in W^{1,p}(2B)$
satisfies $\mathcal{L}_\varepsilon u_\varepsilon = 0$ in $2B$.
Then there exist a function $v_R\in W^{1,p}(B)$ satisfying
$\|\nabla v_R\|_{L^{\infty}(B)}\leq M$, and
$\eta\in(0,1)$,
such that
\begin{equation}\label{pri:4.1}
  \Big(\dashint_{B}|u_\varepsilon - v_R|^{p} \Big)^{1/p}
\lesssim \bigg\{\Big(\frac{\varepsilon}{R}\Big)^\eta +
\big|\{x\in B:|\nabla v_R|\leq M(\varepsilon/R)^{\vartheta}\}\big|^{\beta}\bigg\}\Big(\dashint_{2B}|u_\varepsilon|^p\Big)^{1/p},
\end{equation}
where the up-to constant is independent of $\varepsilon$ and $R$.
\end{lemma}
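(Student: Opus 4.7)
The plan is to reduce the lemma to an application of Corollary~\ref{corollary:1.1} on the unit ball via rescaling, with $v_R$ obtained by solving the homogenized Dirichlet problem.

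First, set $\tilde u(x) := u_\varepsilon(Rx)/R$ on $B(0,2)$. Using the $\mathbb{Z}^d$-periodicity of $A(y,\xi)$ in $y$ and its positive $(p-1)$-homogeneity in $\xi$, one verifies that $\tilde u$ solves $\mathcal L_{\tilde\varepsilon}\tilde u = 0$ on $B(0,2)$ with new scale $\tilde\varepsilon := \varepsilon/R\in (0,1]$, while averages transform as $\bigl(\dashint_{B(0,\sigma R)}|u_\varepsilon|^p\bigr)^{1/p} = R\bigl(\dashint_{B(0,\sigma)}|\tilde u|^p\bigr)^{1/p}$. Let $\tilde v$ solve the homogenized Dirichlet problem $\mathcal L_0 \tilde v = 0$ in $B(0,1)$ with $\tilde v = \tilde u$ on $\partial B(0,1)$, and set $v_R(y) := R\tilde v(y/R)$ on $B$. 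By standard interior $C^{1,\vartheta}$-regularity for the effective $p$-Laplace-type operator under \eqref{c:2} (Lemma~\ref{lemma:3.3} together with the local boundedness \eqref{pri:3.18}) and a maximum-principle-type comparison $\|\tilde v\|_{L^p(B(0,1))}\lesssim\|\tilde u\|_{L^p(B(0,1))}$, we obtain $\|\nabla\tilde v\|_{L^\infty(B(0,1))} + [\nabla\tilde v]_{C^{0,\vartheta}(B(0,1))}\leq M$ with $M\lesssim\bigl(\dashint_{B(0,2)}|\tilde u|^p\bigr)^{1/p}$; equivalently $\|\nabla v_R\|_{L^\infty(B)}\leq M$.

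Since $\tilde u = \tilde v$ on $\partial B(0,1)$, Corollary~\ref{corollary:1.1} applied on $\Omega = B(0,1)$ with scale $\tilde\varepsilon$ yields
\[
\Bigl(\dashint_{B(0,1)}|\tilde u - \tilde v|^p\Bigr)^{1/p} \lesssim \Bigl\{\tilde\varepsilon^{\,\eta} + |\tilde\Omega_M|^\beta\Bigr\}\Bigl(\dashint_{B(0,1)}|\nabla\tilde v|^q\Bigr)^{1/q},
\]
where $\eta\in(0,1)$ is the smaller of the convergence-rate exponents in \eqref{pri:1.5}--\eqref{pri:1.6} and $\tilde\Omega_M := \{x\in B(0,1):|\nabla\tilde v(x)|\leq M\tilde\varepsilon^{\,\vartheta}\}$. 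The factor $(\dashint_{B(0,1)}|\nabla\tilde v|^q)^{1/q}$ is absorbed by $\bigl(\dashint_{B(0,2)}|\tilde u|^p\bigr)^{1/p}$ via Meyer's improvement (Remark~\ref{remark:4.1}, also valid for $\mathcal L_0$) and the Caccioppoli inequality of Lemma~\ref{lemma:4.2}, whose proof transfers verbatim to $\mathcal L_0$. Rescaling back with $\bigl(\dashint_B|u_\varepsilon - v_R|^p\bigr)^{1/p} = R\bigl(\dashint_{B(0,1)}|\tilde u - \tilde v|^p\bigr)^{1/p}$ and the identity $|\{x\in B:|\nabla v_R|\leq M\tilde\varepsilon^{\,\vartheta}\}| = R^d|\tilde\Omega_M|$ (with $R^{d\beta}\leq 1$ absorbed into the implicit constant) produces the stated \eqref{pri:4.1}.

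The main obstacle is that Corollary~\ref{corollary:1.1} is not a formal restatement of Theorem~\ref{thm:1.1} but requires its own derivation via $W^{-1,p}$-duality: following Comment~6, one starts from the identity $\|\tilde u - \tilde v\|_{L^p(B(0,1))} = \sup_i\|\nabla_i(\tilde u - \tilde v)\|_{W^{-1,p}(B(0,1))}$, tests against a $(-\Delta)^{-1}$-type dual potential $\phi_i$, and re-runs the three-term decomposition \eqref{pri:1.10} with $\nabla(\tilde u - \tilde v)$ replaced by $\nabla\phi_i$. Each of $P_1,P_2,P_3$ must then be re-estimated against this new test field along the lines of Lemmas~\ref{lemma:3.5}--\ref{lemma:3.7}. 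A secondary technicality is that the hypothesis $\nabla u_0\in C^{0,\vartheta}(\Omega)$ of Theorem~\ref{thm:1.1} must here be interpreted interior-locally, since $\tilde v$ only solves the equation in the open ball and a priori lacks boundary regularity; inspection of the layer/co-layer arguments confirms that only the interior $C^{1,\vartheta}$-bound is actually invoked, so this interpretation is harmless.
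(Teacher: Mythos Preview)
Your overall framework---rescale to the unit ball, solve the homogenized Dirichlet problem to produce $v_R$, invoke a convergence-rate estimate, rescale back---is exactly what the paper does, and your rescaling identities are correct. The point of departure is the ``main obstacle'' paragraph, where your proposed derivation of Corollary~\ref{corollary:1.1} is more complicated than necessary and, as stated, does not quite work.

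You propose to re-run the decomposition \eqref{pri:1.10} with the test field $K=\nabla u_\varepsilon-V$ replaced by a dual potential $\nabla\phi_i$. But \eqref{pri:1.10} is a consequence of monotonicity: its left side $\|K\|_{L^p}^p$ comes from $\int(A(y,\nabla u_\varepsilon)-A(y,V))\cdot K$, and this structure does not survive replacing $K$ by an unrelated $\nabla\phi$. Moreover the weak formulation takes test functions in $W_0^{1,p}$, while the $W^{-1,p}$ norm is a supremum over $W_0^{1,p'}$, so the pairing you describe does not couple directly.

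The paper's route is shorter and reuses Theorem~\ref{thm:1.1} as a black box rather than reopening $P_1,P_2,P_3$. One writes
\[
\|u_\varepsilon-v_1\|_{L^p(B)}\;\simeq\;\|\nabla_i(u_\varepsilon-v_1)\|_{W^{-1,p}(B)}
\;\le\;\underbrace{\|K_i\|_{W^{-1,p}(B)}}_{\le\,\|K\|_{L^p(B)}}
\;+\;\big\|D_i^h\big(\varepsilon N(\cdot/\varepsilon,\varphi_\varepsilon)\big)\big\|_{W^{-1,p}(B)}.
\]
The first piece is already controlled by Theorem~\ref{thm:1.1} (estimate \eqref{pri:1.4}). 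The second is handled by summation by parts for difference quotients: $\int D_i^h(\varepsilon N)\,\phi=-\int \varepsilon N\,D_i^{-h}\phi$, which gives $\|D_i^h(\varepsilon N)\|_{W^{-1,p}}\le\varepsilon\|N(\cdot/\varepsilon,\varphi_\varepsilon)\|_{L^p}\lesssim\varepsilon\|\nabla v_1\|_{L^p}$ by the argument of \eqref{f:3.40}. No re-estimation of $P_1,P_2,P_3$ is needed.

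A minor secondary point: to pass from $\|\nabla v_1\|_{L^q(B(0,1))}$ back to $\|u_\varepsilon\|_{L^p(B(0,2))}$, the paper applies the \emph{global} Meyer-type estimate \eqref{f:4.8} for the Dirichlet problem (bounding in terms of the $W^{1-1/q,q}$ trace of $u_\varepsilon$), followed by the trace theorem and interior Meyer's \eqref{pri:4.3} for $u_\varepsilon$ on the larger ball $\tfrac{3}{2}B$. Interior Meyer and Caccioppoli for $\mathcal L_0$ alone, as you suggest, would only control $\nabla v_1$ on compact subsets of $B(0,1)$, which is not sufficient for the full-ball norm appearing on the right of \eqref{pri:1.4}.
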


\begin{proof}
The proof is based upon the estimate $\eqref{pri:1.4}$, and there are two steps to complete
the whole argument. Here we set $\eta = \alpha\beta+\frac{(1-\gamma)(1-\tau)}{p-1}$, and
construct $v\in W^{1,p}(B)$ satisfying $\mathcal{L}_0 v_R
= \mathcal{L}_\varepsilon u_\varepsilon =0$ in $B$ with
$v_R=u_\varepsilon$ on $\partial B$. Since $u_\varepsilon\in C^{1,\theta}_{loc}(2B)$, it is
fine to assume that $\|\nabla v_R\|_{L^{\infty}(B)}\leq M$.

Step 1. In the special case of $R=1$, we may show the estimate $\eqref{pri:4.1}$.
For any fixed $i=1,\cdots,d$, there holds
\begin{equation*}
\begin{aligned}
\|u_\varepsilon - v_1\|_{L^p(B)}
&=  \|\nabla_i (u_\varepsilon - v_1)\|_{W^{-1,p}(B)} \\
&\leq \underbrace{\|\nabla_i u_\varepsilon - \nabla_i v_1
- D_i^h(\varepsilon N(\cdot/\varepsilon,\varphi_\varepsilon))\|_{W^{-1,p}(B)}}_{I_1}
+ \underbrace{\|D_i^h(\varepsilon N(y,\varphi_\varepsilon))\|_{W^{-1,p}(B)}}_{I_2},
\end{aligned}
\end{equation*}
where $h = \varepsilon^\tau$ with $\tau$ fixed in $\eqref{range:2}$, and
$\varphi_\varepsilon = S_\varepsilon(\psi_{4\varepsilon}\nabla v_1)$ with $\psi_{4\varepsilon}$
being the corresponding cut-off function.
For the ease of the statement, we denote by
\begin{equation}\label{def:4.1}
\Omega_{M,\varepsilon}^R:=\{x\in B(0,R):|\nabla v_R|\leq M\varepsilon\}
\end{equation}
Thus, it follows from the estimate $\eqref{pri:1.4}$ that
\begin{equation*}
\begin{aligned}
I_1
&\leq \|\nabla u_\varepsilon - \nabla v_1
- D^h(\varepsilon N(\cdot/\varepsilon,\varphi_\varepsilon))\|_{L^{p}(B)} \\
&\lesssim \big\{\varepsilon^\eta + |\Omega_{M,\varepsilon}^1|\big\}\|\nabla v_1\|_{L^q(B)} \\
&\lesssim \big\{\varepsilon^\eta + |\Omega_{M,\varepsilon}^1|\big\}\|u_\varepsilon\|_{W^{1,q}(B)}
\qquad\Big(~\lesssim\big\{\varepsilon^\eta + |\Omega_{M,\varepsilon}^1|\big\}
\|\nabla u_\varepsilon\|_{L^{q}(B)}~\Big) \\
&\lesssim \big\{\varepsilon^\eta + |\Omega_{M,\varepsilon}^1|\big\}
\|\nabla u_\varepsilon\|_{L^{p}(\frac{3}{2}B)}
\end{aligned}
\end{equation*}
where we employ the $L^q$ estimate $\eqref{f:4.8}$ coupled with the trace theorem in the third line.
Note that $u_\varepsilon$ up to a constant is still a solution to
$\mathcal{L}_\varepsilon u_\varepsilon = 0$, which leads to the
estimate in the brace. The fourth line comes from Meyer's estimate $\eqref{pri:4.3}$.
Then we turn to show a computation on $I_2$.
By definition,
\begin{equation*}
\begin{aligned}
I_2 &= \sup_{\|\phi\|_{W^{1,p^\prime}_0(\Omega)}\leq 1}
\Big|\int_{B}\varepsilon N(\cdot/\varepsilon,\varphi_\varepsilon)D^{-h}(\phi)dx\Big| \\
&\leq C\varepsilon\Big(\int_{B}|N(\cdot/\varepsilon,\varphi_\varepsilon)|^pdx\Big)^{1/p}
\|\nabla \phi\|_{L^{p^\prime}(B)}\lesssim \varepsilon\|\nabla v_1\|_{L^p(B)}
\lesssim \varepsilon\|\nabla u_\varepsilon\|_{L^p(B)},
\end{aligned}
\end{equation*}
in which we note that $|D^h(\varepsilon N(\cdot/\varepsilon,\varphi_\varepsilon))| $ is an
integrable function due to the estimate $\eqref{pri:2.8}$ under our assumption on the size of $h$.
The second inequality might be derived from the estimate $\eqref{f:3.40}$.

Hence, combining the above estimates, we have
\begin{equation}\label{f:4.3}
\|u_\varepsilon - v_1\|_{L^p(B)} \lesssim
\big\{\varepsilon^\eta + |\Omega_{M,\varepsilon}^1| + \varepsilon\big\}
\|\nabla u_\varepsilon\|_{L^{p}(\frac{3}{2}B)}
\lesssim
\big\{\varepsilon^\eta + |\Omega_{M,\varepsilon}^1| \big\}
\|u_\varepsilon\|_{L^{p}(2B)}
\end{equation}
where the second step follows from Caccioppoli's inequality $\eqref{pri:4.2}$.

Step 2. We show that the estimate $\eqref{pri:4.1}$ is also valid for $\varepsilon\leq R<1$,
by a rescaling argument. Let $\tilde{u}_\varepsilon(y)
= u_\varepsilon(Ry) = u_\varepsilon(x)$, where $y\in B(0,2)$ and $x\in B(0,2R)$. On account of
the assumption $\eqref{a:1}$, we have
\begin{equation*}
  \text{div}_y A(y/\varepsilon^\prime,\nabla_y \tilde{u}_\varepsilon) = 0 \quad \text{in~~} B(0,2),
\qquad \varepsilon^\prime = \varepsilon/R.
\end{equation*}
We set $v_R(x) = v_1(x/R)$ for any $x\in B(0,R)$, and denote by
$\widehat{B}(\xi) = R^{p-1} \widehat{A}(R^{-1}\xi)$ for any $\xi\in\mathbb{R}^d$. Obviously,
the operator $\widehat{B}$ still satisfies the structure $\eqref{c:2}$. Then we have
\begin{equation*}
 \text{div}_y \widehat{B}(\nabla_y v_1) =
\text{div}_y A(y/\varepsilon^\prime,\nabla_y \tilde{u}_\varepsilon) = 0 \quad \text{in~~} B(0,1)
\qquad v_1 = \tilde{u}_\varepsilon \quad\text{on~}\partial B(0,1),
\end{equation*}
with the condition $\|\nabla_y v_1\|_{L^\infty(B(0,1))}\leq RM$.
By last step,  it follows from the estimate $\eqref{f:4.3}$ that
\begin{equation*}
 \|\tilde{u}_\varepsilon - v_1\|_{L^p(B(0,1))}
\lesssim \Big\{(\varepsilon^\prime)^\eta +
|\Omega_{RM,\varepsilon^\prime}^1|^\beta \Big\}
\|\tilde{u}_\varepsilon\|_{L^{p}(B(0,2))},
\end{equation*}
and this implies
\begin{equation*}
\|u_\varepsilon - v_R\|_{L^p(B(0,R))}
\lesssim \Big\{(\varepsilon/R)^\eta +
\big|\Omega_{M,(\varepsilon/R)}^R\big|^\beta \Big\}
\|u_\varepsilon\|_{L^{p}(B(0,2R))},
\end{equation*}
where we just mention that
$\Omega_{M,(\varepsilon/R)}^R = \{x\in B(0,R):|\nabla v_R|\leq M(\varepsilon/R)^\vartheta\}$ by definition.
Multiplying $R^{-\frac{d}{p}}$ on the both sides above leads to the stated estimate $\eqref{pri:4.1}$, and we have completed
the proof.
\end{proof}


\subsection{Proof of Theorem $\ref{thm:1.2}$}
The main idea may be found in \cite[Theorem 5.2]{S5} or \cite[Theorem 5.4]{WXZ1}, and we make
a modification of the original proof to fit the situation here.
To do so, we set $\sigma\in (0,1)$, and
\begin{equation*}
H(r,\sigma,v) = r^{-\sigma}\bigg\{\Big(\dashint_{B(0,r)}|v|^p\Big)^{1/p}
\bigg\}.
\end{equation*}
For each $r\in[\varepsilon,1/2]$,
let $v_r$ be the function given in Lemma $\ref{lemma:4.1}$. Then,
in terms of the interior Lipschitz  estimate $\eqref{pri:3.18}$, there holds
\begin{equation}\label{f:4.4}
H(\theta r,\sigma,v_r)
\leq C\theta^{1-\sigma} H(r,\sigma,v_r)
\end{equation}
for any $\theta\in(0,1/4)$, where we just use Poincar\'e's inequality and Caccioppoli's inequality
in the computation. Recall the notation
$\Omega_{M,(\varepsilon/r)}^r$ in $\eqref{def:4.1}$, and we note that
\begin{equation*}
  |\Omega_{M,(\varepsilon/r)}^r|\lesssim r^d.
\end{equation*}
Then, by using the estimate $\eqref{pri:4.1}$ we acquire that
\begin{equation}
\begin{aligned}
H(\theta r,\sigma,u_\varepsilon)
&\leq (\theta r)^{-\sigma}\Big(\dashint_{B(0,\theta r)}
|u_\varepsilon - v_r|^p dx\Big)^{\frac{1}{p}}
+ H(\theta r,\sigma,v_r) \\
&\leq C_1\theta^{-\sigma-\frac{d}{p}}\Big\{(\varepsilon/r)^{\eta}+r^{d\beta}\Big\}
H(2r,\sigma,u_\varepsilon)
+ C_2\theta^{1-\sigma} H(r,\sigma,v_r)\\
&\leq \max\bigg\{C_1C_2\theta^{-\sigma-\frac{d}{p}}(\varepsilon/r)^{\eta},
C_1C_2\theta^{-\sigma-\frac{d}{p}}r^{d\beta},
C_2\theta^{1-\sigma}\bigg\}H(2r,\sigma,u_\varepsilon),
\end{aligned}
\end{equation}
where  we use the estimate $\eqref{f:4.4}$
in the second step.
Now, we first fix $\theta\in(0,1/4)$ such that $C_2\theta^{\sigma}\leq 1/4$, and
then let $N\varepsilon\leq r< r_0$. By choosing $N>1$ large and $r_0\in(0,1)$ sufficiently small,
we also obtain
\begin{equation*}
\max\Big\{C_1C_2\theta^{-\sigma-\frac{d}{p}}N^{-\eta},
C_1C_2\theta^{-\sigma-\frac{d}{p}}r_0^{d\beta}\Big\}\leq 1/4.
\end{equation*}
Obviously, we have
\begin{equation}\label{f:4.5}
H(\theta r,\sigma,u_\varepsilon)\leq \frac{1}{4} H(2r,\sigma,u_\varepsilon).
\end{equation}
Moreover, multiplying $r^{-1}$ on the both sides of $\eqref{f:4.5}$ and then integrating
with respect to $r$ from $N\varepsilon$ to $r_0$, we have
\begin{equation*}
\int_{\theta N\varepsilon}^{\theta r_0}H(r,\sigma,u_\varepsilon)\frac{dr}{r}
\leq \frac{1}{4} \int_{2N\varepsilon}^{2r_0}H(r,\sigma,u_\varepsilon)\frac{dr}{r}
\end{equation*}
and this implies that
\begin{equation*}
\int_{\theta N\varepsilon}^{2r_0}H(s,\sigma,u_\varepsilon)\frac{ds}{s}
\leq \frac{4}{3} \int_{\theta r_0}^{2r_0}H(s,\sigma,u_\varepsilon)\frac{ds}{s}
\leq CH(2r_0,\sigma,u_\varepsilon).
\end{equation*}
Thus we deduce from the above estimate that
\begin{equation}\label{f:4.6}
H(r,\sigma,u_\varepsilon)\leq CH(r_0,\sigma,u_\varepsilon)
\end{equation}
holds for any $\varepsilon\leq r< r_0$. We mention that due to the relationship
$N\gtrsim \theta^{-\frac{\sigma}{\eta}-\frac{d}{d\eta}}$, it is trivial to prove
the stated result $\eqref{f:4.6}$ in the case of $\varepsilon\leq r\leq \theta N\varepsilon$,
as well as, of $r_0\leq r\leq 1$.
Then it is clear to see that
\begin{equation*}
\begin{aligned}
\Big(\dashint_{B(0,r)}|\nabla u_\varepsilon|^p \Big)^{\frac{1}{p}}
&\lesssim r^{\sigma-1}H(2r,\alpha,u_\varepsilon)\\
&\lesssim r^{\sigma-1}H(R,\alpha,u_\varepsilon)
\lesssim \Big(\frac{r}{R}\Big)^{\sigma-1}
\Big(\dashint_{B(0,R)}|\nabla u_\varepsilon|^p \Big)^{\frac{1}{p}}
\end{aligned}
\end{equation*}
where we note that $u_\varepsilon$ up to a constant is still a solution to
$\mathcal{L}_\varepsilon u_\varepsilon = 0$ in $B(0,1)$.
The first step follows from Caccioppoli's inequality $\eqref{pri:4.2}$, and second one
comes from the estimate $\eqref{f:4.6}$,
while we use Poincar\'e's inequality in the last step. Then, by setting $\kappa=p(1-\sigma)$,
the above estimate consequently leads to the desired result $\eqref{pri:1.11}$, and we have
completed the whole proof.
\qed

\section{Examples on the assumption $\eqref{c:2}$}\label{section:5}

We take $A(y,\xi) = a(y)|\xi|^{p-2}\xi$ as an example, with the assumption that
\begin{equation*}
a(y+z) = a(y) \qquad\text{and}
\qquad \mu_0 \leq a(y) \leq \mu_1 \qquad \forall y\in\mathbb{R}^d, z\in\mathbb{Z}^d,
\end{equation*}
which is known as weighted p-Laplace equation.
In the case of $d=1$, the homogenized
equation could be figured out, and
\begin{equation*}
  \widehat{A}(\xi) = \widehat{a}|\xi|^{p-2}\xi
\end{equation*}
with
\begin{equation*}
\widehat{a} = \Big(\int_0^1\big[a(y)\big]^{\frac{1}{1-p}}dy\Big)^{1-p}
\end{equation*}
(see \cite[Example 25.4]{P}).

Then we turn to the case $d\geq 2$. For the higher dimensions, we assume that
\begin{equation}\label{a:6}
 \frac{\partial}{\partial y_i}\big[a(y)\big] = \frac{\partial}{\partial y_j}\big[a(y)\big],
\qquad i\not= j \text{~and~} i,j =1,2,\cdots,d.
\end{equation}
In such the case, one may construct the first order corrector in the form of
\begin{equation}\label{f:5.1}
  N(y,\xi) = \underbrace{\Big[\chi(y) + \sum_{k=1}^d y_k\Big]}_{T_1}
\underbrace{\sum_{k=1}^d\xi_k}_{T_2} - y\cdot\xi,
\qquad y\in Y,
\end{equation}
where the corrector $\chi$ satisfies
\begin{equation}\label{pde:5.2}
\left\{\begin{aligned}
&\text{div} \Big[a(y)\big|\nabla\chi +1\big|^{p-2}(\nabla\chi+1)\Big] = 0 \quad\text{in}~ Y,\\
& \chi\in W^{1,p}_{per}(Y), \quad \dashint_{Y}\chi = 0.
\end{aligned}\right.
\end{equation}
It is known that there exists the unique solution of the above cell problem, and
this implies that $N(\cdot,\xi)\in W^{1,p}_{per}(Y)$ satisfies the equation $\eqref{pde:1.2}$.
The homogenized equation is given by
\begin{equation}\label{pde:5.1}
-\widehat{a}\cdot\nabla \Big(|\sum_{k=1}^d\nabla_k u_0|^{p-2}\sum_{k=1}^d\nabla_k u_0\Big) = F
\qquad\text{in~}\Omega,
\end{equation}
where $\widehat{a}\in\mathbb{R}^d$ is a vector, and its component is given by
\begin{equation*}
  \widehat{a}_i =\int_{Y}\Big[a(y)\big|\nabla \chi(y)+1\big|^{p-2}
\big(\nabla_i\chi(y)+1\big)\Big]dy.
\end{equation*}
Note that the equation $\eqref{pde:5.1}$
is solvable according to $\widehat{a}$ direction,
which means if we take $F(x) = F(\widehat{a}\cdot x)$ and
$u_0(x) = u_0(\widehat{a}\cdot x)$ for any $x\in\Omega$, then
the equation $\eqref{pde:5.1}$ is transformed  into
\begin{equation*}
-\frac{\partial}{\partial r}
\bigg(\Big|\frac{\partial u_0}{\partial r}\Big|^{p-2}
\frac{\partial u_0}{\partial r}\bigg)
= \Big(|\widehat{a}|^2\big|\sum_{k=1}^d\widehat{a}_k\big|^{p-2}
\sum_{k=1}^d\widehat{a}_k\Big)^{-1}F(r)
\qquad\text{and}\quad  r=\widehat{a}\cdot x,
\end{equation*}
provided
\begin{equation}\label{c:6}
  \sum_{k=1}^d\widehat{a}_k\not=0.
\end{equation}
Thus, we go back to the equation $\eqref{pde:5.2}$, and it is possible to find a radial
solution (i.e., $\chi(y)=\chi(|y|)$) to meet the condition $\eqref{c:6}$, since
it is fine to assume $a(y) = a(|y|)$ under the assumption
$\eqref{a:6}$. In such the case,
we may have $\widehat{a}_i = \widehat{a}_j$ for any $i\not=j$.
\begin{remark}
\emph{In fact, the term $T_1$ of $\eqref{f:5.1}$ leads to the equation $\eqref{pde:5.2}$, while
the form of $T_2$ of $\eqref{f:5.1}$ determines the formula of
homogenized operator in $\eqref{pde:5.1}$, separately.
Thus, if the corrector $\chi$ is a radial solution, then we may further simplify $T_2$ into
$\sum_{k=1}^{d}\xi_k = d\xi_1$ for example, which gives
\begin{equation}\label{f:5.3}
 |\sum_{k=1}^d\xi_k|\geq |\xi|/\sqrt{d}.
\end{equation}
We mention that the above inequality will be true if there exists an integer $i_0\in[1,d]$
such that $\max_{i\not=i_0}|\xi_i|\leq |\xi_j|$ and $\sum_{i\not=i_0}\xi_i\geq 0$. In fact,
to verify the conditions $\eqref{c:4}$ and $\eqref{c:5}$, the inequality $\eqref{f:5.3}$ will
be used in later computations.}
\end{remark}

Then we plan to show the reasonableness of $\eqref{f:5.1}$, at least, in the case of $p=2$.
According to the asymptotic expansion $u_\varepsilon = u_0(x) + \varepsilon u_1(x,y) +
\varepsilon^2 u_2(x,y)+\cdots,$ one may derive that
\begin{equation}\label{eq:5.1}
 \frac{\partial}{\partial y_i}\Big(a(y)\frac{\partial u_1}{\partial y_i}\Big) =
- \frac{\partial}{\partial y_i}\big[a(y)\big]\frac{\partial u_0}{\partial x_i}
\end{equation}
(see for example \cite{S5}), where $y=x/\varepsilon$. On the one hand, let
$u_1(x,y) = N(y,\nabla u_0)$, and it is not hard to verify that
$N(y,\nabla u_0)$ in the form of $\eqref{f:5.1}$
in fact satisfies the above equation under the condition $\eqref{a:6}$
coupled with the equation $\eqref{pde:5.2}$. Firstly, in view of $\eqref{f:5.1}$,
\begin{equation*}
 \nabla_{y_i} N(y,\nabla u_0) = \Big(\nabla_{y_i}\chi + e_i\Big)
\Big(\sum_{k=1}^d\nabla_{x_k}u_0\Big) - \nabla_{x_i}u_0.
\end{equation*}
Then
\begin{equation*}
\begin{aligned}
\nabla_{y_i}\big\{a(y)\nabla_{y_i} N(y,\nabla u_0)\big\}
&= \nabla_{y_i}\Big\{a(y)(\nabla_{y_i}\chi + 1)\Big\}\Big(\sum_{k=1}^d\nabla_{x_k}u_0\Big)
- \nabla_{y_i}\big[a(y)\big]\nabla_{x_i}u_0\\
&= - \nabla_{y_i}\big[a(y)\big]\nabla_{x_i}u_0,
\end{aligned}
\end{equation*}
where we use the equation $\eqref{pde:5.2}$ in the case $p=2$.
In this sense, the form of $\eqref{f:5.1}$ is acceptable. On the other hand,
we say that the form of $\eqref{f:5.1}$ leads to the same equation that the corrector $\chi$
ought to satisfy. From $\eqref{eq:5.1}$, we have
\begin{equation*}
 \frac{\partial}{\partial y_i}\Big(a(y)\frac{\partial u_1}{\partial y_i}\Big) =
- \frac{\partial}{\partial y_i}\big[a(y)\big]
\Big(\sum_{k=1}^d\frac{\partial u_0}{\partial x_k}\Big),
\end{equation*}
that is due to the assumption $\eqref{a:6}$.
Let $u_1(x,y) = \chi(y)(\sum_{k=1}^d \partial u_0/ \partial x_k)$, it is clear to see that
$\chi$ admits a solution of
\begin{equation*}
\nabla_{y_i}\Big\{a(y)\big(\nabla_{y_i}\chi + 1\big)\Big\} = 0
\quad\text{in}\quad Y,
\end{equation*}
which is exactly the form of the equation $\eqref{pde:5.2}$ in the case of $p=2$.
Thus, in terms of the equation that the corrector obeys,
the form of $\eqref{f:5.1}$ plays the same role as the above decomposition of
$u_1$ in the linear case.

Now, we can show the desired estimates $\eqref{c:4}$ and $\eqref{c:5}$. Set
\begin{equation}\label{f:5.2}
 P(y,\xi)  = \big(\nabla\chi + 1\big)\sum_{k=1}^d\xi_k
\end{equation}
with the quantity $\sum_{k=1}^d\xi_k$ admitting the inequality $\eqref{f:5.3}$.
Then, it follows that
\begin{equation}\label{f:5.4}
\begin{aligned}
|P(y,\xi)| &\geq |\xi||\nabla \chi + 1|/\sqrt{d},\\
|P(y,\xi)-P(y,\xi^\prime)|
&\geq |\xi-\xi^\prime|\Big|\big|\nabla\chi+1\big|/\sqrt{d}-1\Big|, \\
 |P(y,\xi)-P(y,\xi^\prime)|&\leq |\xi-\xi^\prime|(|\nabla \chi| + 2).
\end{aligned}
\end{equation}
In the case $2<p<\infty$, we have
\begin{equation*}
 \begin{aligned}
&\quad \int_{Y}|P(y,\xi)|^{p-2}|P(y,\xi)-P(\cdot,\xi^\prime)|^2 dy\\
&\gtrsim |\xi|^{p-2}|\xi-\xi^\prime|^2\int_Y
|\nabla \chi + 1|^{p-2}\big|\big|\nabla\chi+1\big|/\sqrt{d}-1\big|^2dy,
\end{aligned}
\end{equation*}
and this together with
\begin{equation*}
\begin{aligned}
\big<\widehat{A}(\xi)-\widehat{A}(\xi^\prime),\xi-\xi^\prime\big>
&\geq \mu_0\int_Y |P(y,\xi) - P(y,\xi^\prime)|^2(|P(y,\xi)| + |P(y,\xi^\prime)|)^{p-2}dy\\
&\gtrsim  \int_Y
|P(y,\xi) - P(y,\xi^\prime)|^2 \big(|P(y,\xi)|^{p-2}+|P(y,\xi^\prime)|^{p-2}\big)dy\\
\end{aligned}
\end{equation*}
gives the desired estimate $\eqref{c:4}$. Then we proceed to show $\eqref{c:5}$,
and on account of the assumption $\eqref{c:1}$ in the case of $1<p<2$, we  arrive at
\begin{equation*}
\begin{aligned}
|\widehat{A}(\xi) - \widehat{A}(\xi^\prime)|
&\leq \mu_1\int_Y
|P(y,\xi) - P(y,\xi^\prime)| \big(|P(y,\xi)|+|P(y,\xi^\prime)|\big)^{p-2}dy \\
&\lesssim|\xi-\xi^\prime|\big(|\xi|+|\xi^\prime|\big)^{p-2}\int_Y
|\nabla\chi+2| |\nabla\chi+1|^{p-2}dy \\
&\lesssim |\xi-\xi^\prime|\big(|\xi|+|\xi^\prime|\big)^{p-2}
\end{aligned}
\end{equation*}
and this gives $\eqref{c:5}$. In the above estimate, we use $\eqref{f:5.4}$
in the second inequality. Here the up to constant also depend on
the corrector $\chi$ given by $\eqref{pde:5.2}$. In the end, we mention that, by the analogy idea
one may construct other examples, such as $A_i(y,\xi) = a_{ij}(y)|\xi|^{p-2}\xi_j$ under
a similar assumption to $\eqref{a:6}$, and we omit these details here.

\begin{center}
\textbf{Acknowledgements}
\end{center}
The second author appreciated Prof. Zhongwei Shen and Prof. Felix Otto
for their illuminating instructions and encourages as he studied the homogenization theory. The first and third authors were supported by the National Natural Science Foundation of China
(Grant NO. 11471147).

\end{document}